\providecommand{\U}[1]{\protect\rule{.1in}{.1in}}
 \newtheorem{thm}{Theorem}[section]
 \newtheorem{cor}[thm]{Corollary}
 \newtheorem{lem}[thm]{Lemma}
 \newtheorem{conj}[thm]{Conjecture}
 \newtheorem{prop}[thm]{Proposition}
 \theoremstyle{definition}
 \newtheorem{defn}[thm]{Definition}
 \theoremstyle{remark}
 \newtheorem{exam}[thm]{Example}
 \theoremstyle{remark}
 \newtheorem{rem}[thm]{Remark}
\newtheorem*{theorem*}{Theorem}
\newtheorem*{proposition*}{Proposition}
\newtheorem*{lemma*}{Lemma}
\newtheorem*{corollary*}{Corollary}
\newtheorem*{question*}{Question}
\newtheorem*{conjecture*}{Conjecture}
\newtheorem*{claim*}{Claim}
\newtheorem*{introtheorem*}{Theorem}
\newtheorem*{introproposition*}{Proposition}
\newtheorem*{introlemma*}{Lemma}
\newtheorem*{introcorollary*}{Corollary}
\numberwithin{equation}{section}
\DeclareSymbolFont{rsfs}{U}{rsfs}{m}{n}
\DeclareSymbolFontAlphabet{\mathcal}{rsfs}
 \newcommand{\red}{\textup{red}}
 \newcommand{\Br}{\textup{Br}}
 \renewcommand{\dim}{\textup{dim}}
  \newcommand{\Spec}{\textup{Spec}}
 \newcommand{\p}{\mathfrak{p}}
 \newcommand{\A}{\mathbf{A}}
 \newcommand{\Q}{\mathbb{Q}}
 \newcommand{\Z}{\mathbb{Z}}
 \renewcommand{\O}{\mathcal{O}}
  \newcommand{\Gm}{\mathbb{G}_{m}}
 \newcommand{\br}{\mathrm{Br}}
 \newcommand{\Af}{\mathbb{A}}
 \newcommand{\Ad}{\mathbf{A}}
 \newcommand{\ok}{\mathcal{O}_k}
 \newcommand{\op}{\mathcal{O}_{\mathfrak{p}}}
 \newcommand{\ov}{\mathcal{O}_v}
\newcommand{\ord}{\mathrm{ord}}
\begin{document}

\title[]
{Local-global principle for triangularizability and diagonalizability of matrices}

\author{Kai Huang}

\address{Kai Huang \newline School of Mathematical Sciences, \newline  University of Science and Technology of China; \newline 96 Jinzhai Road, 230026 Hefei, China}

\email{hk0708@mail.ustc.edu.cn}

\author{Yufan Liu}

\address{Yufan Liu \newline School of Mathematical Sciences, \newline  University of Science and Technology of China; \newline 96 Jinzhai Road, 230026 Hefei, China}

\email{liuyufan@mail.ustc.edu.cn}

\keywords{Local-global principle, Triangularization, Diagonalization.}
\subjclass[2020]{Primary: 14G12; Secondary: 15A20}

\begin{abstract}
Given a number field $k$ with the ring of integers $\ok$ and a matrix $M\in \textup{M}_{n}(\ok)$. We prove that if $\ok$ is a principal ideal domain, the local-global principle for triangularizability and diagonalizability of $M$ holds. To explain the possible failures of the local-global principle, we prove that the stratified Brauer--Manin obstruction is the only obstruction to the local-global principle for triangularizability and diagonalizability of $M$ in some special cases.
\end{abstract}

\maketitle
\section{Introduction}
The local-global principle, also known as the Hasse principle, is a central theme in number theory and expresses the philosophy that local information can determine global behavior. Their influence extends into linear algebra as well. In 1980, Guralnick established that similarity of matrices over number fields satisfies the local-global principle in \cite{guralnick}. In this paper, we examine whether diagonalizability and triangularizability of matrices also conform to a local-global principle. More precisely, we consider the following question:
\begin{question*}
Given an integer $n\geq1$ and a matrix $M\in \textup{M}_{n}(\ok)$, is $M$ diagonalizable (resp. triangularizable) over $k$ or $\ok$ provided that $M$ is diagonalizable (resp. triangularizable) over $k_{v}$ or $\ov$ for all $v\in \Omega_{k}$?
\end{question*}

When $\ok$ is a principal ideal domain, we prove that the answer to the above question is \textbf{yes} (see Proposition \ref{tp} and Proposition \ref{dp}).
However, when $\ok$ is not a PID, one can always construct counterexamples to the local-global principle for integral triangularizability and integral diagonalizability (see Remarks \ref{cextria} and \ref{de}).

For further investigation, we reformulate this problem in terms of algebraic geometry.
In fact, for a matrix $M\in \textup{M}_{n}(\ok)$, whether $M$ is triangularizable (resp. diagonalizable) over $k$ or $\ok$ can be translated into the existence of rational or integral points on the triangularization variety $X_M$ (resp. the diagonalization variety $Y_M$), which is defined at the beginning of Section \ref{to variety} as a closed subvariety of the general linear group $\mathrm{GL}_{n}$. 

For the case of diagonalization, we prove that $Y_M$ is a copy of an algebraic group for any $M$, and the Brauer--Manin obstruction is the only obstruction to the existence of rational and integral points of $Y_M$. More precisely, we have the following theorem.
\begin{thm}[Theorem \ref{bm}]
    Let $k$ be a number field and $M\in \mathrm{M}_n(\ok)$. Then 

\begin{itemize}
    \item Every connected component of the diagonalization variety $Y_{M,\red}$ satisfies weak approximation.
    \item $\overline{Y_{M,\red}(k)}= Y_{M,\red}(\A_{k})^{\Br}_{\bullet}$. In particular, the Brauer--Manin obstruction is the only obstruction to the existence of integral or rational points of $Y_{M,\red}$.
\end{itemize}
\end{thm}

However, for the case of triangularization, the corresponding varieties become quite subtle, its structure is extremely complicated. The Brauer--Manin obstruction may be insufficient, so we introduce the so-called stratified Brauer--Manin obstruction (see Definition~\ref{def str}). Based on the calculations of numerous examples, we conjecture that the stratified Brauer--Manin obstruction is the only obstruction to the existence of rational and integral points.
Moreover, we have confirmed the validity of this conjecture in several special cases:
\begin{thm}[Theorem \ref{th5.3}]
    If $M \in \mathrm{M}_{m+n}(\ok)$ is similar to $\begin{pmatrix}
            \lambda I_m & 0\\
            0 & J_{n}(\lambda)
        \end{pmatrix}$ over $k$, where $\lambda\in\ok$ and $J_{n}(\lambda)$ is the $n\times n$ Jordan block with eigenvalue $\lambda$, then we have
        \begin{itemize}
            \item Every irreducible component of the triangularization variety $X_M$ is smooth and $k$-rational.
            \item The stratified Brauer--Manin obstruction is the only obstruction to the existence of rational and integral points of $X_M$.
        \end{itemize}
\end{thm}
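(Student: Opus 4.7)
The plan is to reduce to the Jordan form of $M$, realize the triangularization variety as a $B$-torsor over a Springer fiber of hook type, describe its irreducible components as iterated projective bundles, and deduce the Brauer--Manin assertion from smoothness and $k$-rationality.

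Setting $M_0 := \begin{pmatrix} \lambda I_m & 0 \\ 0 & J_n(\lambda) \end{pmatrix}$ and writing $M = Q M_0 Q^{-1}$ for some $Q \in \mathrm{GL}_{m+n}(k)$, the assignment $P \mapsto Q^{-1}P$ is a $k$-isomorphism $X_M \cong X_{M_0}$, so the geometric statement reduces to $X_{M_0}$. With $N := M_0 - \lambda I$ (a nilpotent of Jordan type $(n,1^m)$) one has $X_{M_0} = X_N$, since $P^{-1}M_0P$ is upper triangular iff $P^{-1}NP$ is strictly upper triangular. The map sending $P$ to the flag $F_j = \mathrm{span}(v_1,\ldots,v_j)$ of its column spans realizes $X_N$ as a right $B$-torsor over the Springer fiber $\mathcal{B}_N \subset \mathrm{GL}_{m+n}/B$, where $B$ is the Borel of upper triangular matrices. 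Since $B$ is a connected split solvable $k$-rational group, the torsor is Zariski-locally trivial, and smoothness and $k$-rationality of the components of $X_M$ reduce to those of $\mathcal{B}_N$.

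To describe $\mathcal{B}_N$ I would exploit that $\im N$ is the one-dimensional line $\mathrm{span}(f_1)$. The $N$-stability of a subspace $W$ forces either $W \subseteq \ker N$ or $f_1 \in W$, and the ``non-kernel'' alternative propagates upward in any $N$-stable flag. The combinatorial type of a complete $N$-stable flag is therefore captured by recording, at each step $j$, the value of $i$ such that $F_j$ just exits $\ker N^i$; these types are in bijection with standard Young tableaux of hook shape $(n,1^m)$, i.e.\ with $(n-1)$-element subsets of $\{2,\ldots,m+n\}$. Once a type $T$ is fixed, the successive choices for $F_j$ form either $\mathbb{P}^r$-bundles (for lines in quotients of $\ker N^i$) or $\mathbb{A}^r$-bundles (for lifts against $N$ inside a sub-Grassmannian), and stringing them together presents each component $\mathcal{B}_N^T$ as an iterated projective/affine bundle over $\Spec k$, hence smooth and $k$-rational. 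Pulling back through the $B$-torsor gives the first bullet of the theorem.

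For rational points, $k$-rationality of each component $X_M^T$ forces the unramified Brauer group modulo constants to be trivial there, so the Brauer--Manin obstruction on each stratum is empty; combined with weak approximation for smooth $k$-rational varieties (and the density argument already deployed for $Y_{M,\mathrm{red}}$ in Theorem~\ref{bm}), this shows that the stratified Brauer--Manin obstruction is the only obstruction to $k$-points of $X_M$. The hardest step is the integral case: as Remark~\ref{cextria} already signals, $k$-rationality alone does not suffice. Here I would use that each stratum is, over $\ok$, an iterated torsor under a split solvable group scheme, and that Hilbert~90 for tori together with the vanishing of $\H^1$ for split unipotent groups over a Dedekind domain reduces the existence of an integral section to a finite collection of conditions in $\Pic(\ok)$. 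The main technical difficulty is to verify that these class-group conditions are captured exactly by the Brauer classes entering the stratified obstruction and that adelic approximation can then be carried out through the stratum intersections; this compatibility check between the combinatorial stratification and the Brauer pairing is where I expect the bulk of the work to lie.
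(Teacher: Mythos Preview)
Your approach to the first bullet via Springer fibers is a genuinely different and more conceptual route than the paper's. The paper works entirely in coordinates: it rewrites the defining equations of $X_M$ in terms of certain minors (Lemma~\ref{lem:newdef}), decomposes $X_M$ into closed pieces $V_{\mathbf{r}}$ indexed by a combinatorial set $\mathfrak{R}_{m,n}$ (Lemma~\ref{V_r}), and verifies smoothness and $k$-rationality of each $V_{\mathbf{r}}$ by explicit induction on $n$. Your identification of $X_M$ as a $B$-torsor over the hook-type Springer fiber $\mathcal{B}_N$, whose components are classically iterated projective bundles indexed by standard tableaux, would recover the first bullet more structurally; your $(n-1)$-subsets of $\{2,\ldots,m+n\}$ are in bijection with the paper's $\mathfrak{R}_{m,n}$.

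The second bullet, however, has a real gap. You argue that $k$-rationality forces the \emph{unramified} Brauer group modulo constants to vanish, so ``the Brauer--Manin obstruction on each stratum is empty''. But the strata here are affine, and for integral points the relevant group is the full $\Br(V_i)/\Br(k)$, not its unramified part; already $\mathbb{G}_m$ is $k$-rational with nontrivial $\Br/\Br(k)$ and fails naive strong approximation. What actually has to be proved is the density $\overline{V_i(k)} = V_i(\A_k)_\bullet^{\Br}$ on each component, and $k$-rationality alone does not deliver this. Your fallback plan for integral points---iterated split-solvable torsors, Hilbert~90, class-group bookkeeping, and an unspecified compatibility with the Brauer pairing---is not a known mechanism for producing strong approximation with Brauer--Manin, and you yourself flag the final step as the place ``where I expect the bulk of the work to lie''.

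The paper's method here is completely different and does the real work. It introduces the notion of a \emph{good fibration} (Definition~\ref{goodfib}) together with a propagation lemma (Lemma~\ref{lem5.4}) showing that such fibrations lift the property $\overline{(-)(k)}=(-)(\A_k)_\bullet^{\Br}$ from base to total space. Each $V_{\mathbf{r}}$ is then unwound, via explicit cofactor expansions and induction on $n$, into a tower of good fibrations whose bases are affine spaces minus codimension~$\geq 2$ loci, products of $\mathrm{GL}$'s and $\mathbb{G}_m$'s, and certain subvarieties $\mathrm{SL}_{s,r}\subset\mathrm{SL}_s$ (Lemmas~\ref{sl} and~\ref{SA W}), all handled by Lemmas~\ref{product}--\ref{codim 2}. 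This fibration argument is what carries the integral statement; your proposal does not supply a substitute for it.
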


The outline of this article is as follows:
section \ref{notation} introduces all the notations and basic lemmas we will need in the sequel.
In section \ref{section classical}, using classical linear algebra methods, we show that if $\ok$ is a principal ideal domain, the local-global principle for triangularizability and diagonalizability of $M$ holds. Otherwise corresponding counterexamples have also been constructed there. For details, see Proposition \ref{tp}, Proposition \ref{dp} and their remarks.
To explain the failure of local-global principle, we translate the above question into the language of algebraic geometry and study the Brauer--Manin obstruction in section \ref{to variety}. Our strategy is making the most of the linear algebra information of the matrix itself to determine the geometric structure of the corresponding variety. Finally, Section \ref{stratified BM} focuses on the stratified Brauer--Manin obstruction, and its main result is Theorem \ref {th5.3}.

Finally, we further remark that the ongoing work by Mingqiang Feng and Ziyang Zhu investigates whether the local-global principle holds for integral similarity of integral matrices.

\section{Notation and Preliminaries}\label{notation}
\subsection{Notation and conventions}
We begin by introducing the notation and conventions used throughout this paper.

For a number field $k$, we denote by $\ok$ the ring of integers of $k$, by $\Omega_{k}$ the set of all places of $k$, and by
$\A_{k}$ the ring of ad\`eles of $k$.
For a finite place $\p$ of $k$, we denote by
$k_{\p}$ the completion of $k$ at $\p$, by
$\ord_{\p}(-)$ the valuation associated to $\p$, and by $\op$ the valuation ring of $k_{\p}$.
For a finite nonempty subset $S$ of $\Omega_k$ containing all the
archimedean places, we denote by $\O_{k,S}$ the ring of $S$-integers of $k$, which is precisely $\O_k$ when $S$ is the set of archimedean places. 

For a commutative unitary ring $R$, we denote 
\begin{itemize}
    \item $\textup{M}_{m,n}(R)$: set of all $m\times n$ matrices with $R$ entries,
    and  briefly $\textup{M}_{n}(R):=\textup{M}_{n,n}(R)$,
    \item $\mathrm{GL}_{n}(R)$: set of all invertible matrices in $\textup{M}_{n}(R)$,
    \item $\mathrm{SL}_{n}(R)$: set of all determinant $1$ matrices in $\mathrm{GL}_{n}(R)$,
    \item $I_{n}$: the identity matrix in $\textup{M}_{n}(R)$,
    \item $J_{m}(\lambda)$: the Jordan block of size $m$ with eigenvalue $\lambda\in R$.
\end{itemize}

For a commutative unitary ring $R$ and a matrix $M\in \textup{M}_{n}(R)$, we denote
\begin{itemize}
    \item $M^{-1}$: the inverse matrix of $M$,
    \item $M^{\mathrm{T}}$: the transpose matrix of $M$, 
    \item $M^{*}$: the adjugate matrix of $M$ (i.e., the transpose of the cofactor matrix of $M$),
    \item $\det M$: the determinant of $M$.
\end{itemize}

For a ring homomorphism $R\to S$ between commutative unitary rings and a matrix $M\in \textup{M}_{n}(R)$, we say that $M$ is diagonalizable (resp. triangularizable) over $S$ if there exists a matrix $T\in \mathrm{GL}_{n}(S)$ such that $T^{-1}MT\in \textup{M}_{n}(S)$ is diagonal (resp. upper triangular), here $M$ also means the image of $M$ under $\textup{M}_{n}(R)\to\textup{M}_{n}(S)$ by abuse of notation. 

 Let $V$ be a variety (i.e. a separated scheme of finite type) over a number field $k$, and $\mathcal{V}$ an integral model of $V$ over $\ok$ (i.e. a separated scheme of finite type over $\ok$ whose generic fiber is isomorphic to $V$). We write $V(\A_k)_{\bullet}$ for the \textbf{reduced} adelic space of $V$, where at every real place $v$ the factor $V(k_v)$ is replaced by its set of connected components, and at every complex place $v$ it is reduced to a single point.
The Brauer--Manin set $V(\A_k)^{\Br}_{\bullet}$, defined as the set of elements of $V(\A_k)_{\bullet}$ that are orthogonal to the Brauer
group $\Br(V)$ of $V$ with respect to the Brauer--Manin pairing, is indeed a closed subset. Moreover, we have a chain of inclusions $V(k)\subset V(\A_k)^{\Br}_{\bullet}\subset V(\A_k)_{\bullet}$. We say that $V$ satisfies \begin{itemize}
\item  local-global principle for rational points if
$\prod\limits_{v\in\Omega_{k}} V(k_{v})\neq \varnothing$ implies $V(k)\neq\varnothing$.
\item local-global principle for integral points (with respect to $\mathcal{V}$) if $\prod\limits_{v\in\Omega_{k}} \mathcal{V}(\ov)\neq \varnothing$ implies $\mathcal{V}(\ok)\neq\varnothing$.
\item weak approximation if the image of the diagonal map $V(k)\to \prod\limits_{v\in\Omega_{k}} V(k_{v})$ is dense.
\item strong approximation if the image of the diagonal map $V(k)\to V(\A_k)_{\bullet}$ is dense.
 \end{itemize}

 We say that the Brauer--Manin obstruction is the only
obstruction to the existence of rational (resp. integral) points of $V$ if $V(\A_k)^{\Br}_{\bullet}\neq \varnothing$ implies $V(k)\neq \varnothing$ (resp. $V(\A_k)^{\Br}_{\bullet}\cap\prod\limits_{v\in \Omega_k}\mathcal{V}(\ov)\neq \varnothing$ implies $\mathcal{V}(\ok)\neq \varnothing$).
\subsection{Preliminaries}

In this subsection, we will present some auxiliary results from the literature that will be used in Section \ref{to variety} and \ref{stratified BM}.

\begin{lem}[{\cite[Thmeorem C]{SZ14} and \cite[Theorem 3.1]{Lv20}}]\label{product}
    Let $X$ and $Y$ be smooth and geometrically integral varieties over a number field $k$, then we have
    \[
    (X\times Y)(\Ad_k)_{\bullet}^{\br}=X(\Ad_k)_{\bullet}^{\br}\times Y(\Ad_k)_{\bullet}^{\br}.
    \]
    In particular, if $\overline{X(k)}=X(\Ad_k)_{\bullet}^{\br}$ and $\overline{Y(k)}=Y(\Ad_k)_{\bullet}^{\br}$, then we have $\overline{(X\times Y)(k)}=(X\times Y)(\Ad_k)_{\bullet}^{\br}$.
\end{lem}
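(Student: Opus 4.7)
The plan is to prove the two inclusions of the claimed equality separately. The direction $(X\times Y)(\Ad_k)^{\br}_{\bullet}\subseteq X(\Ad_k)^{\br}_{\bullet}\times Y(\Ad_k)^{\br}_{\bullet}$ is immediate from the functoriality of the Brauer--Manin pairing: pulling back any class $\alpha_X\in \Br(X)$ along the first projection $p_X\colon X\times Y\to X$ gives $p_X^*\alpha_X\in \Br(X\times Y)$, and the pairing of $(x_v,y_v)$ with $p_X^*\alpha_X$ is exactly the pairing of $(x_v)$ with $\alpha_X$. Thus if $(x_v,y_v)$ is orthogonal to all of $\Br(X\times Y)$, its projections are orthogonal to $\Br(X)$ and $\Br(Y)$ respectively.

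The reverse inclusion is the heart of the matter. The strategy, following \cite{SZ14}, is to analyze $\Br(X\times Y)$ modulo the subgroup $p_X^*\Br(X)+p_Y^*\Br(Y)$. A K\"unneth-type analysis of the Leray spectral sequence for the projections, together with the standard short exact sequence relating $\Br$ and $\Pic$ on a smooth geometrically integral variety, produces an exact sequence
\[
0\to p_X^*\Br(X)+p_Y^*\Br(Y)\to \Br(X\times Y)\to M\to 0,
\]
where the ``mixed'' quotient $M$ is a subquotient of the Galois cohomology of $\Pic(\overline{X})$ tensored with $\Pic(\overline{Y})$, together with analogous contributions from characters of the geometric fundamental groups. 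Since the triangularization and diagonalization varieties that later appear are open subvarieties of $\mathrm{GL}_n$, the non-proper version carried out in \cite[Theorem 3.1]{Lv20} is what actually makes this decomposition available in our setting.

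The next step is to show that every class in $M$ pairs trivially with $(x_v,y_v)$ as soon as $(x_v)\in X(\Ad_k)^{\br}_{\bullet}$ and $(y_v)\in Y(\Ad_k)^{\br}_{\bullet}$. Unwinding the construction of a representative $\beta\in \Br(X\times Y)$ of an element of $M$, the local invariant $\inv_v\beta(x_v,y_v)$ can be written as a cup product of local evaluations coming separately from each factor. The sum over $v$ then vanishes, because it is identified with the value of the global Poitou--Tate pairing on a pair of classes that are themselves globally defined and therefore annihilate one another by reciprocity. This cohomological bookkeeping, together with the delicate extension of the decomposition to the open case, is the main technical obstacle.

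For the final ``in particular'' assertion, observe that $(X\times Y)(k)=X(k)\times Y(k)$ embeds diagonally in $X(\Ad_k)_{\bullet}\times Y(\Ad_k)_{\bullet}$, which is homeomorphic to $(X\times Y)(\Ad_k)_{\bullet}$ with the product topology. Since the closure in a product of topological spaces is the product of closures and since the Brauer--Manin set is closed, we obtain
\[
\overline{(X\times Y)(k)}=\overline{X(k)}\times\overline{Y(k)}=X(\Ad_k)^{\br}_{\bullet}\times Y(\Ad_k)^{\br}_{\bullet}=(X\times Y)(\Ad_k)^{\br}_{\bullet}
\]
from the equality just established.
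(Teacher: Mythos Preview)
The paper does not give its own proof of this lemma; it is stated as a preliminary result and attributed directly to \cite[Theorem C]{SZ14} and \cite[Theorem 3.1]{Lv20}. So there is no ``paper's proof'' to compare against beyond the bare citation.

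Your sketch follows the strategy of those references in broad strokes, and the easy inclusion together with the ``in particular'' paragraph are fine. However, the middle portion is too imprecise to stand as a proof. Your description of the quotient $M$ (``a subquotient of the Galois cohomology of $\Pic(\overline{X})$ tensored with $\Pic(\overline{Y})$, together with analogous contributions from characters of the geometric fundamental groups'') does not match the actual structure obtained in \cite{SZ14} and \cite{Lv20}; there one controls $\Br(X\times Y)$ modulo $p_X^*\Br(X)+p_Y^*\Br(Y)$ via maps to groups such as $\Hom_{\Gal}(\Pic(\overline{Y}),\Br_1(X)/\Br_0(X))$ (and symmetrically), and in the open case one must also handle the contributions from $\overline{k}[X]^*$ and $\overline{k}[Y]^*$. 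More seriously, your assertion that classes in $M$ pair trivially with any $(x_v,y_v)\in X(\Ad_k)^{\br}_{\bullet}\times Y(\Ad_k)^{\br}_{\bullet}$ ``because it is identified with the value of the global Poitou--Tate pairing on a pair of classes that are themselves globally defined'' is not a proof: one must actually exhibit the relevant local decomposition of the invariant and explain precisely which reciprocity or exactness statement kills the sum, and this is where the genuine work in the cited papers lies. If you want to include a self-contained argument rather than cite, you would need to reproduce those steps; otherwise, simply citing \cite{SZ14,Lv20} as the paper does is the appropriate course.
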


\begin{lem}[{\cite[Th\'eor\`eme 5.1]{Dem}}]\label{algebraic group}
    Let $G$ be a connected linear algebraic group over a number field $k$, then we have $\overline{G(k)}=G(\Ad_k)_{\bullet}^{\br}$.
\end{lem}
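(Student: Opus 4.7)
The plan is a dévissage through the standard structural exact sequences of a connected linear algebraic group, exploiting the compatibility of the reduced Brauer--Manin set with short exact sequences. First I would reduce to the reductive case. In characteristic zero the unipotent radical $R_u(G)$ is $k$-split, hence $k$-isomorphic to affine space, so it satisfies strong approximation and has only constant Brauer classes. Viewing $G \to G/R_u(G)$ as an $R_u(G)$-torsor with $k$-rational fibres that admits local sections everywhere, the functoriality of the Brauer--Manin pairing reduces the claim to the reductive quotient.

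Next I would reduce the reductive case to two building blocks: tori and simply connected semisimple groups. Pick a $z$-extension
\[
1 \longrightarrow Z \longrightarrow H \longrightarrow G \longrightarrow 1,
\]
with $Z$ a quasi-trivial torus (so that $H^1(k,Z) = H^1(k_v,Z) = 0$ for all $v$) and with $H^{\mathrm{der}}$ simply connected. Then $T := H/H^{\mathrm{der}}$ is a $k$-torus and $H$ sits in $1 \to H^{\mathrm{der}} \to H \to T \to 1$. Combining Sansuc's exact sequence for the Brauer group of a linear algebraic group with the functoriality of the Brauer--Manin pairing under these sequences, the equality $\overline{H(k)} = H(\Ad_k)_\bullet^{\br}$ follows from the analogous equalities for $T$ and for $H^{\mathrm{der}}$, and the vanishing of $H^1(k_v,Z)$ propagates the result back to $G$.

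For the simply connected semisimple group $H^{\mathrm{der}}$ I would invoke Kneser--Platonov strong approximation together with the vanishing of $\Br_1(H^{\mathrm{der}})/\Br(k)$, which forces the Brauer--Manin set to equal the full reduced adelic space and hence the closure of $H^{\mathrm{der}}(k)$ in $H^{\mathrm{der}}(\Ad_k)_\bullet$; the compact real factors are absorbed by the $\bullet$-reduction. The main obstacle, which carries the arithmetic content, is the torus case. For a $k$-torus $T$ I would apply the Poitou--Tate nine-term exact sequence to the character module $\widehat{T}$. Using the identification $\Br_1(T)/\Br(k) \cong H^2(k,\widehat{T})$ together with the fact that $\Br(T) = \Br_1(T)$, since $T_{\bar k}$ has trivial Brauer group, one shows that the annihilator of $\Br(T)/\Br(k)$ inside $T(\Ad_k)_\bullet$ is exactly the kernel of the Poitou--Tate localisation map, which by exactness coincides with the closure of $T(k)$ in $T(\Ad_k)_\bullet$. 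Assembling the three cases through the dévissage then yields the stated equality for an arbitrary connected linear algebraic $G$.
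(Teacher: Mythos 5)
The paper offers no internal proof here: the lemma is quoted from Demarche, and your d\'evissage (kill the unipotent radical, pass to a $z$-extension, Harari/Poitou--Tate for the torus quotient, Kneser--Platonov strong approximation for the simply connected part) is indeed the architecture of the proof behind that citation. The genuine gap is in your treatment of the simply connected case, which is exactly where the arithmetic content sits. Strong approximation for a semisimple simply connected group holds away from the archimedean places only if $\prod_{v\mid\infty}H_i(k_v)$ is noncompact for every $k$-simple factor $H_i$; your claim that ``the compact real factors are absorbed by the $\bullet$-reduction'' is false. If $H_i(k_v)$ is compact at all archimedean $v$ (e.g.\ $H_i=\mathrm{SL}_1(D)$ for a totally definite quaternion algebra $D$), then $H_i(k)$ is a \emph{discrete}, hence closed and countable, subgroup of the finite-adelic part of $H_i(\A_k)_\bullet$, so it is nowhere near dense, and replacing the archimedean factors by their connected components (here a single point) repairs nothing, since the failure happens at the finite places. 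At the same time $\mathrm{Pic}(\overline{H_i})=0$ and $\Br(\overline{H_i})=0$, so $\Br(H_i)$ reduces to constant classes and the Brauer--Manin set is the whole reduced adelic space; the asserted equality therefore genuinely fails for such groups, and no Brauer--Manin argument can close this gap. The correct statements in the literature (Colliot-Th\'el\`ene--Xu, Demarche, Borovoi--Demarche) either assume that each $k$-simple factor of $G^{\mathrm{sc}}$ is noncompact at the archimedean places, or phrase the conclusion as density of $G(k)$ times the image of $G^{\mathrm{scu}}(k_\infty)$. Your proof must incorporate one of these; note that the hypothesis is automatic for every group to which the paper applies the lemma (products of $\mathrm{GL}_{n_i}$'s and $\G_m$'s, whose simply connected parts are isotropic).

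Two smaller points. The reductions through $G\to G/R_u(G)$ and through the $z$-extension are the technically heavy part of Demarche's argument (Sansuc's exact sequences, compatibility of the pairing along torsors under quasi-trivial tori, and the noncompactness is used again there to approximate in the fibres); as a sketch this is acceptable, but they are not formal consequences of ``functoriality.'' The torus case via Poitou--Tate is essentially Harari's theorem and is fine in outline, though the identification of $\Br_1(T)/\Br(k)$ should be made via the units sequence (nonconstant invertible functions), not via $\mathrm{Pic}(\overline{T})$, which vanishes.
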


\begin{lem}[{\cite[Lemma 1.8]{HW16}}]\label{codim 2}
    Let $n\geq1$ be an integer, let $k$ be a number field and let $F\subseteq \Af_{k}^{n}$ be a closed subscheme of codimension at least $2$. Then the open subscheme $X=\Af_{k}^{n}\setminus F$ satisfies that $\overline{X(k)}=X(\Ad_k)_{\bullet}$.
\end{lem}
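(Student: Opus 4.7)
The plan is to bootstrap from strong approximation for affine space $\Af^n_k$, which holds in the reduced-adelic sense because $\Af^n$ is a vector group, and then to use the codimension-$\geq 2$ hypothesis on $F$ to keep $k$-rational approximations inside $X = \Af^n_k \setminus F$.

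First I would reduce to $n \geq 2$: for $n \leq 1$ the hypothesis $\codim F \geq 2$ forces $F = \varnothing$, and the statement is just strong approximation for $\Af^n_k$ itself. Next I would observe that the archimedean factors of $X(\Ad_k)_{\bullet}$ are trivial. At every real place $v$ the set $F(k_v) \subseteq \mathbb{R}^n$ is semialgebraic of real dimension at most $\dim F \leq n-2$, and removing a real codimension-$\geq 2$ subset from $\mathbb{R}^n$ does not disconnect it, so $\pi_0(X(k_v))$ consists of a single point.

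The core step is the following: given any basic open neighborhood
\[
W \;=\; \prod_{v \in S} U_v \;\times\; \prod_{v \notin S,\, v\,\mathrm{finite}} \mathcal{X}(\ov)
\]
of a point $(P_v)_v \in X(\Ad_k)_{\bullet}$, with the natural integral model $\mathcal{X} = \Af^n_{\ok} \setminus \overline{F}$, exhibit a $k$-rational point of $X$ inside $W$. Viewing each $U_v$ inside $k_v^n$ gives an enlarged basic open $W' = \prod_{v \in S} U_v \times \prod_{v \notin S,\, v\,\mathrm{finite}} \ov^n$ in $\Af^n(\Ad_k)_{\bullet}$, and strong approximation for $\Af^n$ produces $Q \in k^n \cap W'$. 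Since $U_v \subseteq X(k_v)$ avoids $F(k_v)$ at each $v \in S$, one has $Q \notin F(k_v)$ at any such $v$; as $F(k) \subseteq F(k_v)$, this forces $Q \in X(k)$.

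It remains to upgrade ``$Q \in \ov^n$'' to ``$Q \in \mathcal{X}(\ov)$'' at every $v \notin S$. The set $T$ of finite places $v \notin S$ where $Q \notin \mathcal{X}(\ov)$ is finite: for any generator $f_j$ of the ideal of $\overline{F}$ with $f_j(Q) \neq 0$, the nonzero element $f_j(Q) \in k^\times$ is a $v$-unit at cofinitely many $v$, forcing $Q \in \mathcal{X}(\ov)$ there. A Chinese-remainder perturbation $Q \mapsto Q + \epsilon$, with $\epsilon \in k^n$ chosen to be $v$-adically negligible at $v \in S$ and congruent modulo $\p_v$ to a prescribed shift taking the residue of $Q$ into $\Af^n_{\Fv} \setminus \overline{F}_{\Fv}$ at each $v \in T$, then yields the desired $k$-point. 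The prescribed local shift at each $v \in T$ exists precisely because $\overline{F}_{\Fv}$, of codimension $\geq 2$ in $\Af^n_{\Fv}$, has nonempty Zariski-dense complement. I expect the main obstacle to be checking that this perturbation introduces no fresh bad places outside $S \cup T$; this should be handled by requiring $\epsilon$ to be additionally $v$-divisible at the finitely many sensitive places where $f_j(Q)$ is only a borderline $v$-unit, so that $f_j(Q + \epsilon)$ remains a $v$-unit everywhere outside $S$.
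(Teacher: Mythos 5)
Your reductions are fine up to the construction of $Q$: collapsing the archimedean factors (connectedness of $X(k_v)$ at real places because $F(k_v)$ has real dimension $\leq n-2$), producing $Q\in k^n$ by strong approximation for $\Af^n$ in the reduced adelic sense, and observing that the set $T$ of finite places $v\notin S$ with $Q\notin\mathcal{X}(\ov)$ is finite. The gap is the last step, and it is exactly the point where this lemma requires a real idea. For $v\notin S\cup T$, the condition $Q\in\mathcal{X}(\ov)$ says that $Q\bmod\p_v$ avoids $\overline{F}_{\Fv}$, and this is stable under $Q\mapsto Q+\epsilon$ only when $\epsilon\equiv 0\bmod\p_v$. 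Since you must take $\epsilon\not\equiv 0\bmod\p_v$ at the places of $T$, $\epsilon$ is a nonzero element of $k^n$, hence $\epsilon\equiv 0\bmod\p_v$ can hold at only finitely many $v$; at the remaining cofinitely many places the residue of $Q+\epsilon$ is uncontrolled and may land on $\overline{F}_{\Fv}$ at a new finite set $T'$ of places. There is no finite list of ``sensitive places'' identifiable before choosing $\epsilon$ ($v(f_j(Q))$ is either $0$ or it is not; being a $v$-unit has no ``borderline'' regime), so $T'$ depends on $\epsilon$, and imposing further congruences to kill $T'$ merely restarts the problem: the procedure is circular and need not terminate. In short, strong approximation for $X$ does not follow formally from strong approximation for $\Af^n$ plus a single Chinese-remainder perturbation; the infinitely many integrality conditions at the places outside $S$ have to be handled globally, not place by place.

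Note also that the paper does not reprove this statement: it quotes \cite[Lemma 1.8]{HW16}, and the remark following the lemma records how that proof goes, namely by fibring $\Af^n$ into lines and reducing to strong approximation on $\Af^1$. The role of that geometric step is precisely to dispose of the tail conditions that defeat your patch: one chooses a suitable $k$-rational line $L$ (close $v$-adically to the given local points at the finite places of $S$, with $L\cap F=\varnothing$) whose closure meets $\overline{F}$ only above finitely many primes; away from those primes every $\ov$-point of the line lies in $\mathcal{X}(\ov)$ automatically, so only finitely many congruence conditions on the single coordinate of $L\cong\Af^1$ remain, and strong approximation for $\Af^1$ away from the archimedean places (which is exactly what the reduced adelic space retains) concludes. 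Alternatively, your gap could be closed by the Ekedahl/geometric sieve, which shows that after imposing your finitely many congruences, a positive proportion of integral points have no bad places at all; either way, a substantive extra ingredient is needed beyond the CRT adjustment you propose.
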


\begin{rem}
    In fact, it is shown in \cite{HW16} that $X$ satisfies strong approximation off a single place $v_0$. 
    However, their argument (reducing to the strong approximation on an affine line) remains valid for the reduced adelic space as defined here.
\end{rem}

\begin{lem}[{\cite[Proposition 13.3.21]{bgg}}]\label{disjoint}
    Let $X=\bigsqcup_{i=1}^n X_i$ be a finite disjoint union of varieties defined over a number field $k$. Then
\[
    X(\Ad_k)_\bullet^\br \;=\; \bigsqcup_{i=1}^n X_i(\Ad_k)_\bullet^\br.
\]
\end{lem}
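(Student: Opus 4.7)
The plan is to prove both inclusions separately. For the easy direction $\bigsqcup_i X_i(\Ad_k)_\bullet^\br \subseteq X(\Ad_k)_\bullet^\br$, I invoke functoriality: since the $X_i$ are clopen in $X$, each inclusion $\iota_i\colon X_i \hookrightarrow X$ induces a pullback $\iota_i^*\colon \br(X) \to \br(X_i)$ that is compatible with the Brauer--Manin pairing, so any $(x_v) \in X_i(\Ad_k)_\bullet^\br$ remains orthogonal to all of $\br(X)$ when viewed in $X(\Ad_k)_\bullet$. Disjointness of the union inside $X(\Ad_k)_\bullet$ is immediate: at any non-complex place, $X_i(k_v)$ and $X_j(k_v)$ are disjoint subsets of $X(k_v)$ for $i \ne j$.

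For the reverse inclusion, fix $(x_v) \in X(\Ad_k)_\bullet^\br$. At each non-complex place $v$, the local component lies in a unique $X_{i_v}$ since the $X_i$ are clopen in $X$; let $S_i$ denote the set of non-complex places with $i_v = i$, so these sets partition the non-complex places, and the goal is to show that exactly one $S_i$ equals the full set of non-complex places. Because the $X_i$ are clopen, $\br(X) = \prod_i \br(X_i)$; for any $\alpha_0 \in \br(k)$ and index $j$, define $\alpha \in \br(X)$ to be the pullback of $\alpha_0$ along $X_j \to \Spec k$ on the $j$-th factor and zero elsewhere. Orthogonality of $(x_v)$ with $\alpha$ then gives
\[
0 \;=\; \sum_v \inv_v(\alpha(x_v)) \;=\; \sum_{v \in S_j} \inv_v(\alpha_0),
\]
so $\sum_{v \in S_j} \inv_v(\alpha_0) = 0$ for every $\alpha_0 \in \br(k)$ and every $j$.

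If $S_j$ were a proper nonempty subset of the non-complex places, I would choose $v_1 \in S_j$ and a non-complex $v_2 \notin S_j$, and by the Hasse--Brauer--Noether sequence of global class field theory produce $\alpha_0 \in \br(k)$ with $\inv_{v_1}(\alpha_0)$ and $\inv_{v_2}(\alpha_0)$ nonzero and canceling (using $1/2$-invariants if a real place is involved) and zero at all other places; this forces $\sum_{v \in S_j} \inv_v(\alpha_0) = \inv_{v_1}(\alpha_0) \neq 0$, a contradiction. Hence each $S_j$ is empty or the full set, so exactly one $S_{i_0}$ is the full set and $(x_v) \in X_{i_0}(\Ad_k)_\bullet$. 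Finally, extending any $\beta \in \br(X_{i_0})$ by zero to $\alpha \in \br(X)$ and using orthogonality of $(x_v)$ with $\alpha$ shows $(x_v) \in X_{i_0}(\Ad_k)_\bullet^\br$. The main obstacle is the class-field-theoretic construction of $\alpha_0$ with prescribed local invariants; the rest is bookkeeping with the clopen decomposition and the product formula $\br(X) = \prod_i \br(X_i)$.
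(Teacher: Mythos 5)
Your proof is correct: the paper itself gives no argument and simply cites \cite[Proposition 13.3.21]{bgg}, and your reasoning (extend constant classes $\alpha_0\in\Br(k)$ by zero to a single clopen component, then use the Hasse--Brauer--Noether exact sequence to prescribe canceling nonzero invariants at two non-complex places, noting that complex places contribute nothing since $\Br(\C)=0$) is essentially the standard proof found in that reference. The only point worth making explicit is that evaluation of Brauer classes is constant on connected components of $X(k_v)$ at real places, so the pairing is well defined on the reduced adelic space; with that remark your argument is complete.
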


\begin{rem}
    The reason we reduce each factor $X(k_v)$ to a single point at complex places in the definition of the reduced adelic space $X(\Ad_k)_\bullet$ is precisely to obtain the above lemma; otherwise, the components at complex places of points in the Brauer--Manin set could be chosen arbitrarily.

\end{rem}

\section{Classical results for triangularization and diagonalization}\label{section classical}

\subsection{Triangularization}
\begin{lem}\label{lem}
Let $R$ be a principal ideal domain with fraction field $K$.
\begin{enumerate}
    \item If $n\geq2$ and
$r_{1},...,r_{n}\in R$ such that $\mathrm{gcd}(r_{1},...,r_{n})=1$, then there exists $M\in\mathrm{GL}_{n}(R)$ such that $\det M=1$ and $\begin{pmatrix}
r_1 & \cdots &r_n
\end{pmatrix}^\mathrm{T}$ is the first column of $M$.
   \item A matrix $M\in \textup{M}_{n}(R)$ is triangularizable over $R$ if and only if it is triangularizable over $K$.
\end{enumerate}
\end{lem}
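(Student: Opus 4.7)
The plan is to prove (1) by induction on $n$ and then use it as the key ingredient in an inductive proof of (2).

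For (1), the base case $n=2$ follows directly from B\'ezout: choose $a,b\in R$ with $ar_1+br_2=1$, and take $\begin{pmatrix} r_1 & -b \\ r_2 & a\end{pmatrix}$, which has determinant $1$ and first column $(r_1,r_2)^{\mathrm{T}}$. For $n\geq 3$, I would let $d=\gcd(r_2,\dots,r_n)$ and write $r_i=ds_i$ for $i\geq 2$ with $\gcd(s_2,\dots,s_n)=1$. The inductive hypothesis then furnishes an $A\in \mathrm{SL}_{n-1}(R)$ with first column $(s_2,\dots,s_n)^{\mathrm{T}}$, and B\'ezout applied to the coprime pair $(r_1,d)$ produces the remaining scalar coefficients needed to assemble, by a routine block-matrix construction, an $n\times n$ matrix of determinant $1$ with first column $(r_1,\dots,r_n)^{\mathrm{T}}$. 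A conceptually cleaner alternative is to observe that over a PID the extended Euclidean algorithm allows any unimodular column to be reduced to $e_1$ by left multiplication by elementary matrices in $\mathrm{SL}_n(R)$, so that a completion exists formally.

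For (2), the forward direction is immediate since $\mathrm{GL}_n(R)\subseteq \mathrm{GL}_n(K)$, so assume $M\in \mathrm{M}_n(R)$ is triangularizable over $K$ and proceed by induction on $n$, with $n=1$ vacuous. The characteristic polynomial $\chi_M(t)\in R[t]$ is monic and splits in $K$; since $R$ is a PID and hence integrally closed in $K$, every eigenvalue of $M$ in fact lies in $R$. Fix such an eigenvalue $\lambda\in R$ and a nonzero eigenvector $v\in K^n$. After clearing denominators and dividing by the greatest common divisor of the entries, I may assume $v\in R^n$ has coprime coordinates. Part (1) then produces $P\in \mathrm{SL}_n(R)$ whose first column is $v$, and a direct computation shows $P^{-1}MP=\begin{pmatrix}\lambda & * \\ 0 & M'\end{pmatrix}$ for some $M'\in \mathrm{M}_{n-1}(R)$. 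Applying the inductive hypothesis to $M'$ yields $Q\in \mathrm{GL}_{n-1}(R)$ triangularizing $M'$, and then $P\begin{pmatrix}1 & 0 \\ 0 & Q\end{pmatrix}$ triangularizes $M$ over $R$.

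The one step requiring genuine verification is that $M'$ inherits $K$-triangularizability from $M$, which is needed to apply the inductive hypothesis. This follows from the factorization $\chi_M(t)=(t-\lambda)\chi_{M'}(t)$: all eigenvalues of $M'$ are eigenvalues of $M$, so $\chi_{M'}$ splits over $K$, and over a field this is equivalent to triangularizability. Beyond this small piece of bookkeeping, the rest of the argument is entirely routine, and the main conceptual content of the proof is the PID hypothesis, which enters twice: once via integral closure to force eigenvalues into $R$, and once via part (1) to complete primitive eigenvectors to elements of $\mathrm{SL}_n(R)$.
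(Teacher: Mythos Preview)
Your argument for part (2) is correct and follows exactly the same inductive strategy as the paper: pick an eigenvalue in $R$ (using that a PID is integrally closed), take a primitive $R$-eigenvector, complete it to an element of $\mathrm{SL}_n(R)$ via (1), and recurse on the lower-right $(n-1)\times(n-1)$ block, checking that its characteristic polynomial still splits over $K$. For part (1) the paper simply cites the unimodular completion theorem from the literature rather than proving it; your gcd-splitting induction is a standard and correct proof of that result, though the alternative remark about the ``extended Euclidean algorithm'' and elementary matrices is slightly loose over a non-Euclidean PID---one really uses $\mathrm{SL}_2(R)$-transformations (available by B\'ezout) rather than elementary matrices in the strict sense.
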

\begin{proof}
    \ 
\begin{enumerate}[leftmargin=*, labelsep=0.5em, align=left]
\item This is just the unimodular completion theorem, cf. \cite{Unimodular}.

\item As $\mathrm{GL}_{n}(R)\subset \mathrm{GL}_{n}(K)$, the ``only if'' part is obvious. Now we prove the ``if'' part by induction on $n$. If $n=1$, the case is trivial.

If $n\geq2$, $M$ is triangularizable over $K$ implies the characteristic
polynomial $f_M(\lambda)\in R[\lambda]$ of $M$ splits into linear factors, saying $$f_M(\lambda)=\prod\limits_{i=1}^{m}(\lambda-\lambda_{i})^{s_{i}}, \text{ where }\lambda_{i}\in R.$$ 
We can find $r_{1},...,r_{n}\in R$ such that $\mathrm{gcd}(r_{1},...,r_{n})=1$ and $$M\begin{pmatrix}
r_1\\
\vdots\\
r_n
\end{pmatrix}=\lambda_{1}\begin{pmatrix}
r_1\\
\vdots\\
r_n
\end{pmatrix}.$$ By the result of (1) there exists $T\in\mathrm{GL}_{n}(R)$ such that $\det T=1$ and $\begin{pmatrix}
r_1&\cdots&r_n\\

\end{pmatrix}^\mathrm{T}$ is the first column of $T$. 

Now $T^{-1}MT=\begin{pmatrix}
\lambda_1 & *\\
0& M'
\end{pmatrix}$, where $M'\in \textup{M}_{n-1}(R)$. Moreover, the characteristic polynomial of $M'$ must be $f_{M'}(\lambda)=(\lambda-\lambda_{1})^{s_{1}-1}\cdot\prod\limits_{i=2}^{m}(\lambda-\lambda_{i})^{s_{i}}$, hence $M'$ is triangularizable over $K$. By induction hypothesis there exists $P\in \mathrm{GL}_{n-1}(R)$ such that $P^{-1}M'P$ is upper triangular. Hence $\begin{pmatrix}
1 & 0\\
0& P
\end{pmatrix}^{-1}T^{-1}MT\begin{pmatrix}
1 & 0\\
0& P
\end{pmatrix}=\begin{pmatrix}
1 & 0\\
0& P^{-1}
\end{pmatrix}\begin{pmatrix}
\lambda_1 & *\\
0& M'
\end{pmatrix}\begin{pmatrix}
1 & 0\\
0& P
\end{pmatrix}=\begin{pmatrix}
\lambda_1 & *\\
0& P^{-1}M'P
\end{pmatrix}$ is upper triangular.
\end{enumerate}
\end{proof}

\begin{lem}\label{mil}
Let $k$ be a number field and $M\in \textup{M}_{n}(k)$. If all eigenvalues of the image of $M$ in $\textup{M}_{n}(k_{\p})$ belong to $k_{\p}$ for all but finitely many finite places $\p$ (or even weaker, all eigenvalues of the image of $M$ in $\textup{M}_{n}(\op/\p\op)$ belong to $\op/\p\op$ for all but finitely many finite places $\p$), then all eigenvalues of $M$ belong to $k$.
\end{lem}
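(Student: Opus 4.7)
The plan is to argue by contradiction via the Chebotarev density theorem. Write the characteristic polynomial of $M$ as $f_M(\lambda) = g_1(\lambda)\cdots g_r(\lambda)$ with each $g_i \in k[\lambda]$ monic and irreducible. The conclusion that every eigenvalue of $M$ lies in $k$ is equivalent to $\deg g_i = 1$ for all $i$, so I will suppose for contradiction that some factor $g := g_i$ has degree $d \geq 2$ and exhibit infinitely many primes $\p$ for which $f_M \bmod \p$ fails to split into linear factors.

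The first step is to translate the hypothesis: since the characteristic polynomial commutes with reduction mod $\p$, having all eigenvalues of the reduced matrix in $\op/\p\op$ is the same as $f_M \bmod \p$ splitting into linear factors there, and for cofinitely many $\p$ the factor $g$ inherits this property. Discarding the finitely many primes ramifying in the splitting field $L$ of $g$ over $k$, dividing a denominator of a coefficient of $g$, or dividing the discriminant of $g$, I am in the unramified, separable regime. Setting $G := \Gal(L/k)$, irreducibility of $g$ forces $G$ to act transitively and faithfully on the $d$ roots of $g$, so in particular $|G| \geq d \geq 2$.

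The second step is the standard Kummer--Dedekind correspondence, which identifies the factorization type of $g \bmod \p$ with the cycle type of $\mathrm{Frob}_\p$ on the root set: $g \bmod \p$ splits into $d$ distinct linear factors exactly when $\mathrm{Frob}_\p$ is the identity of $G$. Chebotarev then pins the density of such primes at $1/|G| < 1$, producing an infinite set of primes where $g \bmod \p$ does not split, which contradicts the hypothesis. I do not foresee any serious obstacle; the proof amounts to invoking Chebotarev correctly and handling the finite exceptional set of bad primes. One small point worth noting is that the weaker residue-field version of the hypothesis (stated parenthetically) is exactly what the Chebotarev argument uses, so both forms of the hypothesis yield the same conclusion with the same proof.
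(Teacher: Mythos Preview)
Your proof is correct and follows essentially the same approach as the paper: both translate the hypothesis into the splitting of the characteristic polynomial modulo $\p$ and then invoke the Chebotarev density theorem on the splitting field to conclude that $f_M$ already splits over $k$. The paper's argument is more terse (it applies Chebotarev directly to the splitting field of $f_M$ rather than isolating an irreducible factor $g$), but your version simply unpacks the same idea in greater detail.
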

\begin{proof}
Let $f(\lambda)=\mathrm{det}(\lambda I-M)\in k[\lambda]$ be the characteristic polynomial of $M$. If all eigenvalues of the image of $M$ in $\textup{M}_{n}(\op/\p\op)$ belong to $\op/\p\op$ for a finite place $\p$ such that $M\in \textup{M}_{n}(\op)$, then the image of $f(\lambda)$ in $(\op/\p\op)[\lambda]$ splits into linear factors. Now according to Chebotarev density theorem (applied to the splitting field of $f$), $f(\lambda)$ itself splits into linear factors.
\end{proof}

\begin{prop}\label{tp}
Let $k$ be a number field, $S$ a finite nonempty subset of $\Omega_k$ containing all the archimedean places.

Given an integer $n\geq1$ and a matrix $M\in \textup{M}_{n}(\O_{k,S})$, we consider the following statements about $M$:
\begin{enumerate}
    \item $M$ is triangularizable over $\O_{k,S}$;
    \item $M$ is triangularizable over $\op$ for any place $\p\in\Omega_{k}\setminus S$;
    \item $M$ is triangularizable over $\op/\p^{i}\op$ for any place $\p\in\Omega_{k}\setminus S$ and any $i\geq1$;
    \item $M$ is triangularizable over $\op/\p\op$ for any place $\p\in\Omega_{k}\setminus S$;
    \item $M$ is triangularizable over $k$;
    \item $M$ is triangularizable over $k_\p$ for any place $\p\in\Omega_{k}\setminus S$.    
\end{enumerate}
The relation $(1)\Rightarrow (2)\Leftrightarrow(3)\Leftrightarrow (4)\Leftrightarrow(5)\Leftrightarrow(6)$ holds.

In addition, if $\O_{k,S}$ is a principal ideal domain (such $S$ always exists since the ideal class group of $\ok$ is finite), then the relation $(1)\Leftrightarrow(2)\Leftrightarrow(3)\Leftrightarrow(4)\Leftrightarrow(5)\Leftrightarrow(6)$ holds.
\end{prop}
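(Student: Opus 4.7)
The plan is to establish the chain of implications mostly by formal manipulations (inclusions of rings, reduction mod $\mathfrak{p}^i$, splitting of the characteristic polynomial), and to quote Lemmas~\ref{lem} and \ref{mil} at the two nontrivial steps. I would organize the proof as a cycle $(2)\Rightarrow(3)\Rightarrow(4)\Rightarrow(5)\Rightarrow(6)\Rightarrow(2)$, with the additional easy implication $(1)\Rightarrow(2)$ and, under the PID hypothesis, the implication $(5)\Rightarrow(1)$ closing the bigger cycle.

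First I would dispose of the trivial implications. $(1)\Rightarrow(2)$ follows from $\O_{k,S}\subset \op$ for every $\p\in\Omega_k\setminus S$; $(2)\Rightarrow(3)$ follows by applying the ring map $\op\to\op/\p^i\op$ to the triangularizing conjugation; $(3)\Rightarrow(4)$ is the case $i=1$; and $(5)\Rightarrow(6)$ follows from $k\subset k_\p$.

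The first nontrivial step is $(4)\Rightarrow(5)$. Triangularizability of a matrix over a field is equivalent to its characteristic polynomial splitting into linear factors, so (4) says that the reduction of the characteristic polynomial $f_M(\lambda)$ splits over $\op/\p\op$ for all $\p\in\Omega_k\setminus S$, i.e.\ all eigenvalues of $M\bmod\p$ lie in $\op/\p\op$. Lemma~\ref{mil} (applied via Chebotarev density to the splitting field of $f_M$) then forces all eigenvalues of $M$ to lie in $k$, whence $f_M$ splits over $k$ and $M$ is triangularizable over $k$. The second nontrivial step is $(6)\Rightarrow(2)$: for a fixed $\p\in\Omega_k\setminus S$, the ring $\op$ is a PID with fraction field $k_\p$, so Lemma~\ref{lem}(2) applied to $M\in \textup{M}_n(\op)$ turns triangularizability over $k_\p$ into triangularizability over $\op$. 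This closes the cycle.

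Finally, for the additional implication $(5)\Rightarrow(1)$ under the assumption that $\O_{k,S}$ is a PID, I would invoke Lemma~\ref{lem}(2) one more time, this time with $R=\O_{k,S}$ and $K=k$: if $M\in \textup{M}_n(\O_{k,S})$ is triangularizable over the fraction field $k$, it is already triangularizable over $\O_{k,S}$. The existence of such an $S$ is guaranteed by the finiteness of the class group of $\O_k$: enlarging $S$ to contain primes generating the class group makes $\O_{k,S}$ a PID. I do not expect any real obstacle here; the content is entirely concentrated in Lemma~\ref{lem}(2) (unimodular completion) and Lemma~\ref{mil} (Chebotarev), and the proof reduces to bookkeeping around these two inputs.
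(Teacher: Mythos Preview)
Your proposal is correct and uses the same two inputs as the paper (Lemma~\ref{mil} and Lemma~\ref{lem}(2)); the only organizational difference is that you close the cycle via $(6)\Rightarrow(2)$ by applying Lemma~\ref{lem}(2) to the PID $\op$, whereas the paper instead proves $(6)\Rightarrow(5)$ directly from Lemma~\ref{mil}. Both routes are equally short and valid.
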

\begin{proof}
We only prove the case where $\O_{k,S}$ is a principal ideal domain, the other case is similar.

All right arrows ``$\Rightarrow$'' are just by definition except  $(4)\Rightarrow(5)$.

Recall that a matrix over a field is triangularizable if and only if its characteristic polynomial splits into linear factors.

$(4)\Rightarrow(5)$: This follows from Lemma \ref{mil}.

$(6)\Rightarrow(5)$: This follows from Lemma \ref{mil}.

$(5)\Rightarrow(1)$: This follows from Lemma \ref{lem}(2).
\end{proof}

\begin{rem}\label{cextria}
    If $\ok$ is a principal ideal domain, then we take $S$ as the set of archimedean places, and thereby conclude that the local-global principle for triangularizability holds.

    If $\ok$ is no longer a principal ideal domain, there always exists $M\in \mathrm{M}_n(\ok)$ such that it is triangularizable over $k$ (and thus triangularizable over $\op$ for all finite place $\p$) but not triangularizable over $\ok$.

    To construct such $M$, we take nonzero elements $a,b\in \ok$ such that $(a)+(b)$ is not a principal ideal in $\ok$ (such $a,b$ always exist since $\ok$ is not a principal ideal domain).
    Then for any $\delta\in k^{\times}$ such that $a/\delta,b/\delta\in \ok$, we have $(a/\delta)+(b/\delta)\neq \ok$ is a proper ideal of $\ok$.
    
    We take 
    \[
        N= \begin{pmatrix}
            a & & & b\\
            b & a & & \\
            & \ddots & \ddots &\\
            & & b & a
        \end{pmatrix}.
    \]
    Then we have $0\neq\det N=a^n+(-b)^n\in \ok$, or else $(a)=(b)=(a)+(b)$ is a principal ideal, which contradicts the choice of $a,b$.
    Now we choose \textbf{distinct} $\lambda_1,\ldots,\lambda_n\in \ok$ such that $\det N$ divides all $\lambda_i$ , and take 
    \[
        M=N\begin{pmatrix}
            \lambda_1 & & \\
            & \ddots & \\
            & & \lambda_n
        \end{pmatrix}N^{-1}\in \mathrm{M}_n(\ok).
    \]
    Then $M$ is triangularizable over $k$, we will show that $M$ is not triangularizable over $\ok$.

    In fact, assume that there exists $T\in \mathrm{GL}_n(\ok)$ such that $T^{-1}MT$ is an upper triangular matrix.
    Then the first column $T_1$ of $T$ is an eigenvector of $M$ whose eigenvalue coincides with $(T^{-1}MT)_{1,1} = \lambda_i$ for some $i\in\{1,\ldots,n\}$.
    Thus $T_1=N_i/\delta$ for some $\delta\in k^{\times}$ such that $a/\delta,b/\delta\in \ok$, where $N_i$ is the $i$-th column of $N$.
    Since $\det T\in (a/\delta)+(b/\delta)\neq \ok$, we have $\det T\notin \ok^{\times}$, this contradicts $T\in \mathrm{GL}_n(\ok)$.
\end{rem}

\subsection{Diagonalization}

\begin{prop}\label{dp}
Let $k$ be a number field, $S$ a finite nonempty subset of $\Omega_k$ containing all the archimedean places.

Given an integer $n\geq1$ and a matrix $M\in \textup{M}_{n}(\O_{k,S})$, we consider the following statements about $M$:
\begin{enumerate}
    \item $M$ is diagonalizable over $\O_{k,S}$;
    \item $M$ is diagonalizable over $\op$ for any place $\p\in\Omega_{k}\setminus S$;
    \item $M$ is diagonalizable over $\op/\p^{i}\op$ for any place $\p\in\Omega_{k}\setminus S$ and any $i\geq1$;
    \item $M$ is diagonalizable over $\op/\p\op$ for any place $\p\in\Omega_{k}\setminus S$;
    \item $M$ is diagonalizable over $k$;
    \item $M$ is diagonalizable over $k_\p$ for any place $\p\in\Omega_{k}\setminus S$.     
    
\end{enumerate}
The relation $(1)\Rightarrow(2)\Leftrightarrow(3)\Rightarrow(4)\Rightarrow(5)\Leftrightarrow(6)$ holds.

In addition, if $\O_{k,S}$ is a principal ideal domain, then the relation $(1)\Leftrightarrow(2)\Leftrightarrow(3)\Rightarrow(4)\Rightarrow(5)\Leftrightarrow(6)$ holds.
\end{prop}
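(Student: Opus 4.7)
The implications $(1) \Rightarrow (2) \Rightarrow (3) \Rightarrow (4)$ and $(5) \Rightarrow (6)$ are immediate by base change, so the substantive steps are $(4) \Rightarrow (5)$, $(6) \Rightarrow (5)$, $(3) \Rightarrow (2)$, and $(2) \Rightarrow (1)$ under the PID hypothesis. The diagonalization case differs from Proposition \ref{tp} in that I need the minimal polynomial to split into \emph{distinct} linear factors, not merely to split. For $(4) \Rightarrow (5)$, Lemma \ref{mil} places the eigenvalues of $M$ in $k$, and integrality over $\O_{k,S}$ then places them in $\O_{k,S}$; letting $\mu_1, \dots, \mu_r$ be the distinct ones I set $f(\lambda) = \prod_i (\lambda - \mu_i) \in \O_{k,S}[\lambda]$. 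For all but finitely many $\p \notin S$ the differences $\mu_i - \mu_j$ are $\p$-adic units, so $\bar\mu_1, \dots, \bar\mu_r$ are the distinct eigenvalues of $\bar M$ in $\op/\p\op$, and diagonalizability of $\bar M$ forces $f(\bar M) = 0$. Since $f(M) \in \textup{M}_{n}(\O_{k,S})$ reduces to $0$ modulo almost every prime, it vanishes, and separability of $f$ then implies $M$ is diagonalizable over $k$. The implication $(6) \Rightarrow (5)$ is easier: the minimal polynomial of $M$ is the same over $k$ and over $k_\p$, and diagonalizability over any $k_\p$ shows it is separable, while Lemma \ref{mil} places its roots in $k$.

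For $(3) \Rightarrow (2)$ the plan is a $\p$-adic compactness argument. For each $i \geq 1$, lift a diagonalizing element $\bar T_i \in \mathrm{GL}_{n}(\op/\p^i)$ to $T_i \in \mathrm{GL}_{n}(\op)$; surjectivity of the reduction map on $\mathrm{GL}_{n}$ follows because any lift of a unit determinant is itself a unit. The off-diagonal entries of $T_i^{-1} M T_i$ lie in $\p^i \op$, and since $\mathrm{GL}_{n}(\op)$ is compact in the $\p$-adic topology I pass to a convergent subsequence $T_i \to T \in \mathrm{GL}_{n}(\op)$. The off-diagonal entries of $T^{-1} M T$ vanish in the limit, so $T$ diagonalizes $M$ over $\op$.

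The main obstacle is $(2) \Rightarrow (1)$ under the PID hypothesis. At this stage $M$ is already known to be diagonalizable over $k$ with distinct eigenvalues $\mu_1, \dots, \mu_r \in \O_{k,S}$, and I consider the submodules $L_i = \ker(M - \mu_i I) \subseteq \O_{k,S}^n$. Flatness of $\O_{k,S} \to \op$ gives $L_i \otimes \op = \ker(M - \mu_i I) \subseteq \op^n$, and hypothesis (2) yields $\op^n = \bigoplus_i (L_i \otimes \op)$ for every finite $\p \notin S$; combined with the corresponding decomposition over $k$ and the Dedekind-domain principle that a map of finitely generated modules is an isomorphism iff it is so at every localization, this shows the natural sum map $\bigoplus_i L_i \to \O_{k,S}^n$ is an isomorphism. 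Over a PID each $L_i$, being a submodule of a finitely generated free module, is itself free; concatenating bases produces the desired $T \in \mathrm{GL}_{n}(\O_{k,S})$ that diagonalizes $M$. The PID hypothesis is genuinely needed here: over a general Dedekind $\O_{k,S}$ with nontrivial class group the individual $L_i$ may fail to be free even when $\bigoplus_i L_i \cong \O_{k,S}^n$, and this is precisely the source of the potential failure of the integral local-global principle for diagonalization.
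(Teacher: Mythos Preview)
Your proof is correct. The arguments for $(4)\Rightarrow(5)$, $(6)\Rightarrow(5)$, and $(3)\Rightarrow(2)$ are essentially equivalent to the paper's (minimal polynomial vs.\ Jordan form contradiction; sequential compactness vs.\ inverse limit of nonempty finite sets), just phrased differently. The interesting divergence is in $(2)\Rightarrow(1)$. The paper also works with the eigenspace lattices $L_i=E_i\cap\O_{k,S}^n$ and, using the PID hypothesis, picks bases and assembles them into a matrix $\vartheta$; but to show $\det\vartheta\in\O_{k,S}^\times$ it makes a detour through strong approximation on $k$, producing for each $\p$ a global element of $\Xi$ whose determinant has $\p$-valuation zero and then invoking divisibility by $\det\vartheta$. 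Your route is shorter and more structural: you check directly that the sum map $\bigoplus_i L_i\to\O_{k,S}^n$ becomes an isomorphism after tensoring with every $\op$ (this is exactly what local diagonalizability says, via flatness), and then conclude globally by the local criterion for isomorphisms of finitely generated modules. This bypasses strong approximation entirely and makes the role of the PID hypothesis (freeness of the $L_i$) more transparent.
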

\begin{proof}
We only prove the case where $\O_{k,S}$ is a principal ideal domain, the other case is similar.

All right arrows ``$\Rightarrow$'' are just by definition except $(4)\Rightarrow(5)$.

$(4)\Rightarrow(5)$: By Lemma \ref{mil} all eigenvalues of $M$ belong to $k$. Assume that $M$ is not diagonalizable over $k$, then there exists $T\in \mathrm{GL}_n(k)$ such that $T^{-1}MT$ is a Jordan matrix which is not diagonalizable.
Since there are only finitely many entries in $T,M$ and $T^{-1}$, there exists a place $\p\in\Omega_{k}\setminus S$ such that all the entries of $T,M$ and $T^{-1}$ belong to $\op$. 
Let $\overline{T},\overline{M}$ and $\overline{T^{-1}}$ be the images of them under the map  $\mathrm{M}_n(\op)\to\mathrm{M}_n(\op/\p\op)$.
Then we have $\overline{T}^{-1}=\overline{T^{-1}}$ and $\overline{T}^{-1}\overline{M}\overline{T}$ is a Jordan matrix which is not diagonalizable.
So $M$ is not diagonalizable over $\op/\p\op$.
This contradicts (4).

$(6)\Rightarrow(5)$: By Lemma \ref{mil} all eigenvalues of $M$ belong to $k$. Then the conclusion is due to the fact that similarity of matrices is independent of field extensions.

$(3)\Rightarrow(2)$: We fix a place $\p\in\Omega_{k}\setminus S$ and denote $$D_{i}=\{T\in\mathrm{GL}_{n}(\op/\p^{i}\op)|\text{$T^{-1}MT\in\textup{M}_{n}(\op/\p^{i}\op)$ is diagonal}\} \text{ for $i\geq1$},$$
$$D=\{T\in\mathrm{GL}_{n}(\op)|\text{$T^{-1}MT\in\textup{M}_{n}(\op)$ is diagonal}\}.$$
It's easy to check that, as sets, $D=\mathop{\underleftarrow{\mathrm{lim}}}\limits_{i}D_{i}$. Hence if $D_{i}$ are all non-empty, then $D$ is also non-empty by \cite[Lemma in page 13]{Ser}.

$(2)\Rightarrow(1)$: By Lemma \ref{mil} all eigenvalues of $M$ belong to $k$. As $M$ is diagonalizable over $\op$ hence $k_{\p}$ for any place $\p\in\Omega_{k}\setminus S$, we have $M$ is diagonalizable over $k$ since similarity of matrices is independent of field extensions.
Assume that $\lambda_1,\ldots,\lambda_r\in \O_{k,S}$ are all eigenvalues of $M$.
Let $E_i\subset k^n$ be the eigenspace of $\lambda_i$ for each $i$, $d_i=\dim_k E_i$, then $\sum d_i=n$.
Let $E_{\p,i}:=E_i\otimes_{k}k_\p\subset k_\p^n$ for any $i$ and $\p\in\Omega_{k}\setminus S$, it is just the eigenspace of $\lambda_i$ over $k_\p$.

Let $\Xi _i:=\left\{\xi\in \mathrm{M}_{n,d_i}(\O_{k,S})| \xi_{\bullet,1},\ldots,\xi_{\bullet,d_i}\ \text{constitute a}\ k\text{-basis of }E_i\right\}$ for each $i=1,\ldots,r$. Here $\xi_{\bullet,j}$ represents the vector corresponding to the $j$-th column of $\xi$ for $j=1,\ldots,d_i$.
Let 
\[
    \Xi:=\left\{\begin{pmatrix}
\xi_1&\cdots& \xi_r
\end{pmatrix}\in \mathrm{M}_n(\O_{k,S})|\xi_i\in \Xi_i\text{ for any }i=1,\ldots,r\right\}.
\]
Here $\begin{pmatrix}
\xi_1&\cdots& \xi_r
\end{pmatrix}$ denotes the concatenation of the matrices $\xi_1,\ldots,\xi_r$, i.e., the matrix obtained by placing $\xi_1,\ldots,\xi_r$ side by side.
Then $M$ is diagonalizable over $\O_{k,S}$ iff $\Xi\cap \mathrm{GL}_n(\O_{k,S})\neq \varnothing$, i.e. there exists $\xi\in \Xi$ such that $\det \xi$ is invertible in $\O_{k,S}$. 
In this case we have $\xi\in \mathrm{GL}_n(\O_{k,S})$ and 
\[
    \xi^{-1}M\xi=\begin{pmatrix}
      \lambda_1 I_{d_1}&&\\
      &\ddots&\\
      &&\lambda_r I_{d_r}
    \end{pmatrix}.
\]

Similarly for each place $\p\in\Omega_{k}\setminus S$, we denote $$\Xi_{\p,i}:=\{\xi\in \mathrm{M}_{n,d_i}(\op)|\xi_{\bullet,1},\ldots,\xi_{\bullet,d_i}\ \text{constitute a}\ k_\p\text{-basis of }E_{\p,i}\},$$
\[
    \Xi_\p:=\left\{\begin{pmatrix}
\xi_1&\cdots&\xi_r
\end{pmatrix}\in \mathrm{M}_n(\op)|\xi_i\in \Xi_{\p,i}\text{ for any }i=1,\ldots,r\right\}.
\]
Since $M$ is diagonalizable over $\op$, there exists $\xi_\p\in \Xi_\p$ such that $\ord_\p(\det \xi_\p)=0$. 
Fix a $\xi_0\in \Xi$.
By the definition of $\Xi$ and $\Xi_\p$, there exists 
\[
    G_\p=\begin{pmatrix}
    G_{\p,1}&&\\
    &\ddots&\\
    &&G_{\p,r}
    \end{pmatrix}\in \mathrm{GL}_n(k_\p)
\] 
such that $\xi_\p= \xi_0G_\p$, where each $G_{\p,i}\in \mathrm{GL}_{d_i}(k_\p)$. 
According to the strong approximation on $k$, there exists  
\[
    G_\p'=\begin{pmatrix}
G_{\p,1}'&&\\
    &\ddots&\\
    &&G_{\p,r}'
\end{pmatrix}\in \mathrm{GL}_n(k)
\] 
such that each $G_{\p,i}'$ sufficiently closes to $G_{\p,i}$ in the $\p$-adic topology and $G_\p'\in \mathrm{M}_n(\O_{\p'})$ for each finite place $\p'\neq \p$. 
Then we have $\xi_0 G_\p'\in \Xi$ and $\ord_\p(\det\xi_0  G_\p')=0$.

As $\O_{k,S}$ is Noetherian, we have $E_i\cap \O_{k,S}^n$ is a finitely generated $\O_{k,S}$-module for any $i$. Hence  $E_i\cap \O_{k,S}^n$ is isomorphic to $\O_{k,S}^{d_i}$ as a $\O_{k,S}$-module since every finitely generated torsion-free module over a principal ideal domain is free.
Let $\vartheta^i\in \Xi_i $ such that $\vartheta_{\bullet,1}^i,\ldots,\vartheta_{\bullet,d_i}^i $ constitute a $\O_{k,S}$-basis of $E_i\cap \O_{k,S}^n$.
Let 
\(
    \vartheta=\begin{pmatrix}
    \vartheta^1&\cdots& \vartheta^r
   \end{pmatrix}
\),
then for any $\xi\in \Xi$, we have $\xi=\vartheta G$ for some $G\in \mathrm{M}_n(\O_{k,S})$, it implies that $\frac{\det \xi}{\det \vartheta}\in \O_{k,S}$, i.e. $\ord_\p(\det \vartheta)\leq \ord_\p(\xi)$ for any place $\p\in\Omega_{k}\setminus S$.
Because $\xi_0 G_\p'\in \Xi$ and $\ord_\p(\det \xi_0 G_\p')=0$, we conclude that $\ord_\p(\det \vartheta)=0$ for any place $\p\in\Omega_{k}\setminus S$, i.e. $\det \vartheta$ is invertible in $\O_{k,S}$.
This completes the proof.
\end{proof}

It turns out that the opposite directions of all unidirectional arrows above are false. An example is given as follows.
\begin{exam}
For matrix $M=\begin{pmatrix}
    x & a \\
    0 & y
\end{pmatrix}\in \mathrm{M}_n(\Z)$ such that $x-y\neq0$ and $a\neq0$,
we have    \begin{itemize}
\item $M$ is diagonalizable over $\Z$ iff $x-y|a$;
\item $M$ is diagonalizable over $\Z_{p}$ iff $\ord_{p}(x-y)\leq\ord_{p}(a)$;
\item $M$ is diagonalizable over $\Z/p^{k}\Z$ iff $p^{l}|x-y$ implies $p^{min\{l,k\}}|a$ for any $l\geq1$;
\item $M$ is diagonalizable over $\Q$.
\end{itemize}
\end{exam}

\begin{rem}\label{de}    
    If $\ok$ is a principal ideal domain, then we take $S$ as the set of archimedean places, and thereby conclude that the local-global principle for diagonalizability holds.

    If $\ok$ is no longer a principal ideal domain, there always exists $M\in \mathrm{M}_n(\ok)$ such that $M$ is diagonalizable over $\op$ for all finite place $\p$ but not diagonalizable over $\ok$.
    The construction here is more complicated than that in Remark \ref{cextria}, because constructing a such $M$ that is diagonalizable over all $\p$ is no longer just a matter of constructing one that is diagonalizable over $k$.

    In the remainder of this remark, all primes with different subscripts are assumed to be \textbf{distinct}.

    To construct such $M$,  we take a prime ideal $\p_0\subset\ok$ which is not principal.
    Then there exists $m\in \Z_{>1}$ such that $\p_0^m$ is principal since the ideal class group of $k$ is finite.
    According to the strong approximation on $k$ (c.f. \cite[Theorem 13.1.1]{bgg}), one can choose $a\in \ok$ such that $(a)=\p_0\p_a$ for some prime ideal $\p_a$.
    Then $\p_a$ is not principal but $\p_a^m$ is principal.
    Assume that $\p_a^m=(a_0)$ for some $a_0\in \ok$.
    Choose $s\in \ok$ such that $(s)=\p_a\p_s$ for some prime ideal $\p_s$.
    By the strong approximation on $k$, one can choose $r,t\in \ok$ such that $(r)=\p_a^{m-1}\p_r$ and $(t)=\p_a^{m-1}\p_t$ for some prime ideals $\p_r,\p_t$.
    Then we have 
    \[
        (ar)+(st)=\p_0\p_a^m\p_r+\p_a^m\p_s\p_t=\p_a^m=(a_0),
    \]
    so there exists $\alpha,\beta\in \ok$ such that $ar\alpha+st\beta=a_0$.
    
    Now let 
    \[
        T_0=\begin{pmatrix}
            a & -t\beta & \\
            s & r\alpha & \\
              & & I_{n-2}
        \end{pmatrix}\in \mathrm{M}_n(\ok).
    \]
    Choose \textbf{distinct} $\lambda_1,\ldots,\lambda_n\in \ok$ which are divisible by $a_0$, then take
    \[
        M=T_0\begin{pmatrix}
               \lambda_1 &&\\
               &\ddots&\\
               &&\lambda_n 
           \end{pmatrix}T_0^{-1}\in \mathrm{M}_n(\ok).
    \]
    since $\det T_0=a_0$.

    Now we will prove that $M$ is diagonalizable over all $\op$ but not diagonalizable over $\ok$.
    Since $\det T_0=a_0$ is a unit at any place $\p\neq \p_a$, $M$ is diagonalizable over such $\op$.
    Let $\pi$ be a uniformizer of $\mathcal{O}_{\p_a}$, we take
    \[
        T_0'=\begin{pmatrix}
            a/\pi & -t\beta/\pi^{m-1} &\\
            s/\pi & r\alpha/\pi^{m-1} &\\
            && I_{n-2}
        \end{pmatrix}\in \mathrm{M}_n(\mathcal{O}_{\p_a}).
    \]
    Now $\det T_0'=a_{0}/\pi^{m}$ is a unit in $\mathcal{O}_{\p_a}$ which implies $T_0'\in \mathrm{GL}_n(\mathcal{O}_{\p_a})$.
    Since $T_0'^{-1}MT_0'=T_0^{-1}MT_0$ is diagonal, the matrix $M$ should be diagonalizable over $\mathcal{O}_{\p_a}$.
    
    Now we only need to prove that $M$ is not diagonalizable over $\ok$.
    Let $A$ be the eigenspace of eigenvalue $\lambda_1$ and $A_0=A\cap \ok^n$.
    For any $\lambda \in k^{\times}$ such that $\lambda(a,s,0,\ldots,0)\in A_0$ (i.e. $\lambda a,\lambda s\in \ok$), we have $\ord_{\p_a}(\lambda)\geq -1$ and $\ord_{\p}(\lambda)\geq 0$ for any $\p\neq \p_a$.
    
    If $\ord_{\p_a}(\lambda)\geq 0$, then $(\lambda a)+(\lambda s)\subseteq \p_a$.
    
    If $\ord_{\p_a}(\lambda)=-1$, then there exist a prime $\p\neq \p_a$ such that $\ord_\p(\lambda)>0$ since $\p_a$ is not principal, then $(\lambda a)+(\lambda s)\subseteq \p$.
    
    Therefore, for any $\lambda(a,s,0,\ldots,0)\in A_0$, we have $(\lambda a)+(\lambda s)\varsubsetneqq \ok$ is a proper ideal.
    Hence for any $T\in \mathrm{M}_n(\ok)\cap \mathrm{GL}_n(k)$ such that $T^{-1}MT$ is diagonal, we have $(\det T)\varsubsetneqq \ok$, i.e. $T\notin \mathrm{GL}_n(\ok)$.
    Then $M$ is not diagonalizable over $\ok$.
\end{rem}

\section{Reformulating problems via the local-global principle for varieties}\label{to variety}
In this section we translate the problems of triangularization and diagonalization of $M$ into the local-global principle for some special types of varieties.
Let $k$ be a number field and $M\in \mathrm{M}_n(\ok)$. We fix an algebraic closure $\overline{k}$ of $k$. Let $T=(T_{i,j})_{1\leq i,j\leq n}$ be a matrix with adjugate matrix $T^{*}$, where the subscript $i,j$ means the $(i,j)$-entry of $T$. We denote
\begin{itemize}
\item \textbf{Triangularization variety} $X_{M}\subset\mathbb{A}_{k}^{n^{2}+1}=\Spec \ k[T_{i,j},w]$: 
the closed subscheme defined by $$\text{$w\cdot\det(T)=1$ and $(T^{*}MT)_{i,j}=0$ for $i>j$.} $$

Also its integral model $\mathcal{X}_{M} \subset\mathbb{A}_{\ok}^{n^{2}+1}=\Spec \ok[T_{i,j},w]$ defined by the same equations.
\item \textbf{Diagonalization variety} $Y_{M}\subset\mathbb{A}_{k}^{n^{2}+1}=\Spec \ k[T_{i,j},w]$:
the closed subscheme defined by $$\text{$w\cdot\det(T)=1$ and $(T^{*}MT)_{i,j}=0$ for $i\neq j$.}$$

Also its integral model $\mathcal{Y}_{M} \subset\mathbb{A}_{\ok}^{n^{2}+1}=\Spec \ok[T_{i,j},w]$ defined by the same equations.
\end{itemize}

By definition for every field extension $L/k$, there is a bijection between the set of $L$-points of $X_M$ (resp. $Y_M$) and the set of transition matrices of $M$ that achieve triangularization (resp. diagonalization) over $L$, thus the problem about triangularization or diagonalization of $M$ is transformed into a problem about arithmetic properties of these varieties. 
\begin{thm}\label{hasse}
Let $k$ be a number field and $M\in \mathrm{M}_n(\ok)$.
Then $X_{M}$ and $Y_{M}$ both satisfy local-global principle for rational points.

Moreover if $\ok$ is a principal ideal domain, then $X_{M}$ and $Y_{M}$ both satisfy local-global principle for integral points.
\end{thm}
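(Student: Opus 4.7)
The plan is to reduce the theorem to the local-global principles for triangularizability and diagonalizability of $M$ itself, which are already established in Propositions~\ref{tp} and \ref{dp}; the geometric content amounts to bookkeeping once the right dictionary between points of $X_M$, $Y_M$ and actual transition matrices is in place.

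First I would make the dictionary precise. For any field extension $L/k$, an $L$-point of $X_M$ is a pair $(T,w)\in \mathrm{M}_n(L)\times L$ with $w\det(T)=1$ and $(T^*MT)_{i,j}=0$ for $i>j$. The first equation forces $\det(T)\in L^\times$ (and determines $w$), so $T\in \mathrm{GL}_n(L)$; the identity $T^*=\det(T)\cdot T^{-1}$ then converts the remaining equations into the condition that $T^{-1}MT$ is upper triangular. Hence $X_M(L)$ is in bijection with the set of $T\in \mathrm{GL}_n(L)$ triangularizing $M$, and the same argument gives a bijection between $Y_M(L)$ and the set of $T\in \mathrm{GL}_n(L)$ diagonalizing $M$. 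Running through the identical equations over the integral models yields bijections between $\mathcal{X}_M(\ov)$ (resp.\ $\mathcal{Y}_M(\ov)$) and the set of $T\in \mathrm{GL}_n(\ov)$ triangularizing (resp.\ diagonalizing) $M$, and similarly with $\ok$ in place of $\ov$.

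With the dictionary in hand, the rational case is immediate. Assuming $X_M(k_v)\neq\varnothing$ for every $v\in\Omega_k$, we have in particular that $M$ is triangularizable over $k_\p$ for every finite place $\p$; taking $S$ to be the set of archimedean places, the implication $(6)\Rightarrow(5)$ of Proposition~\ref{tp} yields $T\in \mathrm{GL}_n(k)$ triangularizing $M$, that is, an element of $X_M(k)$. Exactly the same argument, using Proposition~\ref{dp}, handles $Y_M$.

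When $\ok$ is a principal ideal domain and $\mathcal{X}_M(\ov)\neq\varnothing$ for every $v\in\Omega_k$, the dictionary gives triangularizability of $M$ over $\op$ at every finite place $\p$; applying $(2)\Rightarrow(1)$ of Proposition~\ref{tp} with $S$ the set of archimedean places (so that $\O_{k,S}=\ok$ is a PID) produces $T\in \mathrm{GL}_n(\ok)$ triangularizing $M$, hence $\mathcal{X}_M(\ok)\neq\varnothing$; $\mathcal{Y}_M$ is handled identically via Proposition~\ref{dp}. I do not expect any real obstacle here, since all nontrivial content is already isolated in Lemma~\ref{mil} and the structure theorem for finitely generated modules over a PID that underlie the two classical propositions, while the archimedean local data plays no role in the argument.
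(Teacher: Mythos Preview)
Your proposal is correct and follows the same route as the paper: reduce via the dictionary between points of $X_M$, $Y_M$ (and their integral models) and actual triangularizing/diagonalizing matrices, then invoke Propositions~\ref{tp} and~\ref{dp}. The paper's proof is the one-line ``directly apply Proposition~\ref{tp} and Proposition~\ref{dp},'' and you have simply unpacked the dictionary that the paper records just before the theorem statement.
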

\begin{proof}
We directly apply Proposition \ref{tp}, Remark \ref{cextria}, Proposition \ref{dp}, Remark \ref{de} to conclude. 
\end{proof}

In the following, we study the strong approximation and Brauer--Manin obstructions of these varieties, aiming to explain the possible failures of the local-global principle. In fact, it is better to consider $X_{M,\red}$ (resp. $ Y_{M,\red}$), the reduced scheme associated to $X_{M}$ (resp. $Y_{M}$), and similarly for their integral models. As before, we will discuss cases of triangularization and diagonalization separately.

\subsection{Diagonalization}

\

\begin{thm}\label{bm}
Let $k$ be a number field and $M\in \mathrm{M}_n(\ok)$. Then we have

\begin{itemize}
    \item Every connected component of $Y_{M,\red}$ satisfies weak approximation.
    \item $\overline{Y_{M,\red}(k)}= Y_{M,\red}(\A_{k})^{\Br}_{\bullet}$. In particular, the Brauer--Manin obstruction is the only obstruction to the existence of integral or rational points of $Y_{M,\red}$.
\end{itemize}
\end{thm}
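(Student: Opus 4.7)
The plan is to decompose $Y_{M,\red}$ into its $k$-connected components, identify each as a suitable $k$-variety, and reduce each of the two assertions to Lemma \ref{algebraic group} together with a standard weak approximation argument for open subsets of affine space --- in some cases vacuously. Lemma \ref{disjoint} then assembles the pieces.

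First, suppose $M$ is diagonalizable over $\bar k$ (otherwise $Y_M = \varnothing$), and let $\mu_1, \ldots, \mu_s \in \bar k$ be the distinct eigenvalues with multiplicities $e_1, \ldots, e_s$. For each function $\sigma:\{1,\ldots,n\}\to\{1,\ldots,s\}$ with $|\sigma^{-1}(i)|=e_i$, setting $D_\sigma = \mathrm{diag}(\mu_{\sigma(1)},\ldots,\mu_{\sigma(n)})$ and
\[
Y_{M,\sigma} = \{T\in\mathrm{GL}_n : T^{-1}MT = D_\sigma\}
\]
realizes $Y_{M,\sigma}$ as a smooth, geometrically connected right torsor under the centralizer $Z_{\mathrm{GL}_n}(D_\sigma)\cong\prod_i\mathrm{GL}_{e_i}$; moreover $Y_{M,\bar k}=\bigsqcup_\sigma Y_{M,\sigma}$.

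The Galois group $\Gamma=\mathrm{Gal}(\bar k/k)$ permutes the $\sigma$'s through its action on the $\mu_i$, and each $\Gamma$-orbit corresponds to a $k$-connected component $C$ of $Y_{M,\red}$. Fixing an element $\sigma_0$ of the orbit with stabilizer $\mathrm{Gal}(\bar k/L)$, the classical fact that matrix similarity is invariant under field extensions shows that $M$ and $D_{\sigma_0}$ are similar over $L$; so $Y_{M,\sigma_0}$ has an $L$-point, and by Hilbert 90 applied to $\prod_i\mathrm{GL}_{e_i}$ we obtain $Y_{M,\sigma_0}\cong\prod_i\mathrm{GL}_{e_i,L}$ as $L$-varieties. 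A direct Galois-descent computation identifies $C$ with $Y_{M,\sigma_0}$ viewed as a $k$-scheme via the composite $Y_{M,\sigma_0}\to\mathrm{Spec}\,L\to\mathrm{Spec}\,k$ (rather than any Weil restriction, which would be a genuinely different object).

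Two cases now occur. If $L = k$, then $C\cong\prod_i\mathrm{GL}_{e_i,k}$ is a connected linear algebraic group over $k$ and in particular an open subscheme of $\mathbb{A}^{\sum e_i^2}_k$ with complement a hypersurface; weak approximation follows immediately by approximating on the ambient affine space, and Lemma \ref{algebraic group} gives $\overline{C(k)}=C(\A_k)^{\Br}_\bullet$. If $L\neq k$, then by Jordan's theorem (a proper subgroup of a finite group cannot meet every conjugacy class) combined with Chebotarev density, there exist places $v$ of $k$ admitting no embedding $L\hookrightarrow k_v$ over $k$; at any such $v$, $C(k_v)=\varnothing$, so $C(\A_k)_\bullet=\varnothing$ and both assertions hold vacuously. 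Summing over orbits with Lemma \ref{disjoint} yields $\overline{Y_{M,\red}(k)}=Y_{M,\red}(\A_k)^{\Br}_\bullet$, and weak approximation on each component has been established.

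The main obstacle is the Galois-descent identification of $C$ as an $L$-scheme viewed as a $k$-scheme (and not the Weil restriction $\mathrm{Res}_{L/k}$), together with the slightly surprising dichotomy based on whether $L=k$: while the untwisted case falls directly under Demarche's theorem, the twisted case relies on the purely group-theoretic emptiness of the adelic set.
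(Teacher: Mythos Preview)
Your proof is correct and follows a genuinely different route from the paper's. The paper first invokes the Hasse principle already established in Theorem \ref{hasse}: assuming $\prod_v Y_M(k_v)\neq\varnothing$, it deduces $Y_M(k)\neq\varnothing$, so all eigenvalues lie in $k$, and then verifies by explicit coordinate changes that each $Y_{M,\sigma,\red}$ is isomorphic over $k$ to $\prod_i\mathrm{GL}_{n_i}$ before applying Lemmas \ref{product}, \ref{algebraic group}, and \ref{disjoint}. You instead allow eigenvalues in $\bar k$, recognise each $Y_{M,\sigma}$ as a torsor under $\prod_i\mathrm{GL}_{e_i}$ (trivialised by the descent of similarity), and handle the possibly nontrivial Galois action on the set of $\sigma$'s via the dichotomy $L=k$ versus $L\neq k$, the latter being disposed of vacuously by Chebotarev. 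This makes your argument independent of Theorem \ref{hasse} and avoids the paper's explicit matrix manipulations, at the cost of the descent identification and the density argument.

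Two small remarks. First, the stabiliser of any $\sigma_0$ is $\mathrm{Gal}(\bar k/K)$ with $K$ the splitting field of the characteristic polynomial, so $L=K$ is automatically Galois over $k$; Jordan's theorem is therefore unnecessary, as Chebotarev alone produces places where Frobenius is nontrivial and hence $L\not\hookrightarrow k_v$. Second, your descent identification of $C$ with $Y_{M,\sigma_0}$ regarded as a $k$-scheme (not the Weil restriction) is indeed correct and follows cleanly from the fact that $\mathrm{Gal}(L/k)$ acts simply transitively on the orbit: the Galois-invariants of $\prod_\sigma A_\sigma$ project isomorphically onto any single factor $A_{\sigma_0}$ as $k$-algebras.
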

\begin{proof}
We can assume $\prod\limits_{v\in\Omega_{k}}Y_{M}(k_{v})\neq\varnothing$.
As $Y_{M}$ satisfies local-global principle for rational points by Theorem \ref{hasse}, $\prod\limits_{v\in\Omega_{k}}Y_{M}(k_{v})\neq\varnothing$ implies $Y_{M}(k)\neq\varnothing$. Hence $M$ is diagonalizable over $k$. 

More explicitly, $M$ is similar to $M'=\mathrm{diag}(\lambda_1 I_{n_1},\lambda_2 I_{n_2},\ldots,\lambda_r I_{n_r})\in \mathrm{M}_n(k)$ over $k$. Here $\lambda_{1},\ldots,\lambda_{r}\in \ok$ are distinct and $n_{1}+\cdots+n_{r}=n$. Now $Y_{M}\cong Y_{M'}$ as $k$-varieties since similar matrices correspond to isomorphic varieties, and the isomorphism is given by a coordinates change of $\mathbb{A}_{k}^{n^{2}+1}=\Spec \ k[T_{i,j},w]$. So we can assume $M=M'$.
    
Let $\mathfrak{s}:=\{\sigma:\{1,\ldots,n\}\to\{\lambda_1,\ldots,\lambda_r\}\mid \#\sigma^{-1}(\lambda_i)=n_i\ \text{for each}\ i=1,\ldots,r\}$. Let $\mathfrak{S}_n$ denote the symmetric group on $\{1,\ldots,n\}$, acting on $\mathfrak{s}$ by $(\tau\cdot\sigma)(i):=\sigma(\tau^{-1}(i))$ for $\tau\in\mathfrak{S}_n$. This action is transitive, and the stabilizer of any $\sigma\in\mathfrak{s}$ is the subgroup $\prod_{i=1}^r\mathfrak{S}_{n_i}$ permuting the indices within each fiber $\sigma^{-1}(\lambda_i)$; hence $\mathfrak{s}\cong\mathfrak{S}_n/\prod_{i=1}^r\mathfrak{S}_{n_i}$ as sets, and in particular $\#\mathfrak{s}=\frac{n!}{n_{1}!\cdots n_{r}!}$.

Let $Y_{M,\sigma}$ be the closed subscheme of $\Spec\ k[T_{i,j},w]$ which is defined by
\[
    w\cdot \det(T)=1,\quad(T^{*}MT)_{i,j}=\begin{cases}
        0 & \text{for $i\neq j$},\\
        \det(T)\cdot \sigma(i) & \text{for $i=j$}.
    \end{cases}
\]
It is also a closed subscheme of $Y_M$, and we have $Y_M=\bigsqcup_{\sigma\in \mathfrak{s}} Y_{M,\sigma}$ as topological spaces by checking their sets of $\overline{k}$-points. Hence $Y_{M,\red}=\bigsqcup_{\sigma\in \mathfrak{s}} Y_{M,\sigma,\red}$ as $k$-varieties.

\textbf{Step $1$}: 
    For any $\sigma,\sigma'\in \mathfrak{s}$, we have $Y_{M,\sigma,\red}\simeq Y_{M,\sigma',\red}$ as $k$-varieties.
    
We only need to prove the case where $\sigma'=\tau\cdot\sigma$ for an adjacent transposition $\tau=(t,\,t+1)\in\mathfrak{S}_n$ with $1\leq t\leq n-1$, since such transpositions generate $\mathfrak{S}_n$. Explicitly, this means $\sigma'(t)=\sigma(t+1)$, $\sigma'(t+1)=\sigma(t)$, and $\sigma'(i)=\sigma(i)$ for $i\neq t,t+1$. Write $\sigma(t)=\lambda_\alpha$ and $\sigma(t+1)=\lambda_\beta$ with $\alpha\neq\beta$.

Note that the matrix $$\begin{pmatrix}
            I_{t-1} & & & \\
             & 0 & 1 & \\
             & 1 & 0 & \\
             & & & I_{n-t-1} 
        \end{pmatrix}\in \mathrm{GL}_{n}(k)$$
induces a $k$-automorphism of $\Spec\ k[T_{i,j},w]$ which is given by:
    \begin{align*}
        w&\mapsto w\\
        (T_{i,j})_{1\leq i,j\leq n} & \mapsto (T_{i,j})_{1\leq i,j\leq n}\cdot\begin{pmatrix}
            I_{t-1} & & & \\
             & 0 & 1 & \\
             & 1 & 0 & \\
             & & & I_{n-t-1} 
        \end{pmatrix},
    \end{align*}
we claim that this automorphism induces an isomorphism from $Y_{M,\sigma,\red}$ to $Y_{M,\sigma',\red}$. It suffices to show that it induces a bijection from $Y_{M,\sigma,\red}(\overline{k})$ to $Y_{M,\sigma',\red}(\overline{k})$
since the reduced induced closed
subscheme structure is unique. The remains are straightforward calculations of linear algebra, note that \[
        \begin{pmatrix}
            0 & 1\\
            1 & 0
        \end{pmatrix}^{-1}
        \begin{pmatrix}
            \lambda_\alpha & 0\\
            0 & \lambda_\beta
        \end{pmatrix}
        \begin{pmatrix}
            0 & 1\\
            1 & 0
        \end{pmatrix}=
        \begin{pmatrix}
            \lambda_\beta & 0\\
           0 & \lambda_\alpha
        \end{pmatrix}.
    \]

\textbf{Step $2$}: 
For $\sigma_0\in\mathfrak{s}$ such that $(\sigma_0(1),\ldots,\sigma_0(n))=(\overbrace{\lambda_{1},\ldots,\lambda_{1}}^{n_{1}},\overbrace{\lambda_{2},\ldots,\lambda_{2}}^{n_{2}},\ldots,\overbrace{\lambda_{r},\ldots,\lambda_{r}}^{n_{r}})$, we have
$Y_{M,\sigma_0,\red}\cong\mathrm{GL}_{n_{1}}\times\cdots\times\mathrm{GL}_{n_{r}}$ as $k$-varieties.

For $T\in\mathrm{GL}_n(\overline{k})$ with $w=\det(T)^{-1}$, the defining equations of $Y_{M,\sigma_0,\red}$ say that $T^*MT=\det(T)\cdot M$, equivalently $T^{-1}MT=M$, i.e.\ $T$ commutes with $M=\mathrm{diag}(\lambda_1 I_{n_1},\ldots,\lambda_r I_{n_r})$. Since $\lambda_1,\ldots,\lambda_r$ are distinct, this holds if and only if $T$ is block diagonal, $T=\mathrm{diag}(A_1,\ldots,A_r)$ with $A_s\in\mathrm{GL}_{n_s}(\overline{k})$ for $1\leq s\leq r$. Thus $Y_{M,\sigma_0,\red}(\overline{k})$ is identified with $\mathrm{GL}_{n_1}(\overline{k})\times\cdots\times\mathrm{GL}_{n_r}(\overline{k})$ via $T\mapsto(A_1,\ldots,A_r)$, which is an isomorphism of reduced closed subvarieties of $\Spec\,k[T_{i,j},w]$, both being cut out by $w\cdot\det(T)=1$ together with $T_{i,j}=0$ for $(i,j)$ outside the diagonal blocks $B_s=\{(i,j)\mid n_1+\cdots+n_{s-1}+1\leq i,j\leq n_1+\cdots+n_s\}$, $1\leq s\leq r$.

\textbf{Step $3$}: In summary, $Y_{M,\red}$ has $\frac{n!}{n_{1}!\cdots n_{r}!}$ connected components, and every connected component is isomorphic to $\mathrm{GL}_{n_{1}}\times\cdots\times\mathrm{GL}_{n_{r}}$ as $k$-varieties.
Hence every connected component of $Y_{M,\red}$ satisfies weak approximation by \cite[Theorem 13.1.2]{bgg}.

To show $\overline{Y_{M,\red}(k)}=Y_{M,\red}(\A_{k})^{\Br}_{\bullet}$, note that $$Y_{M,\red}(k)=\bigsqcup_{\sigma\in \mathfrak{s}} (Y_{M,\sigma,\red}(k))\cong\bigsqcup_{\sigma\in \mathfrak{s}} (\mathrm{GL}_{n_1}(k)\times\cdots\times \mathrm{GL}_{n_r}(k))$$ $$Y_{M,\red}(\A_{k})^{\Br}_{\bullet}=\bigsqcup_{\sigma\in \mathfrak{s}} (Y_{M,\sigma,\red}(\A_{k})^{\Br}_{\bullet})\cong\bigsqcup_{\sigma\in \mathfrak{s}} (\mathrm{GL}_{n_1}(\A_{k})^{\Br}_{\bullet}\times\cdots\times \mathrm{GL}_{n_r}(\A_{k})^{\Br}_{\bullet})$$ by Lemma \ref{product} and Lemma~\ref{disjoint}, where the last bijections are induced by the isomorphism of varieties in Step~2. 
Hence we reduce to the case of connected linear algebraic groups, this is done by Lemma \ref{algebraic group}.
\end{proof}

\begin{rem}
Keep the notation above, where $M=\mathrm{diag}(\lambda_1 I_{n_1},\ldots,\lambda_r I_{n_r})$. The results of Steps~1 and~2 can be rephrased in terms of group actions.
Let $G:=\prod_{i=1}^r\mathrm{GL}_{n_i}\times\mathfrak{S}_n$, with identity component $G^\circ=\prod_{i=1}^r\mathrm{GL}_{n_i}\times\{1\}$. The group $G$ acts on $Y_M$ by $(A,\tau)\cdot T:=ATP_\tau^{-1}$, where $A=(A_1,\ldots,A_r)\in\prod_{i=1}^r\mathrm{GL}_{n_i}$ is embedded in $\mathrm{GL}_n$ as the block diagonal matrix $\mathrm{diag}(A_1,\ldots,A_r)$, and $P_\tau$ denotes the permutation matrix associated to $\tau$. This action is transitive on $Y_M$ and preserves $Y_{M,\red}$.
The stabilizer of $I_n$ in $G$ is $\{(P_\tau,\tau)\mid\tau\in\prod_{i=1}^r\mathfrak{S}_{n_i}\}\cong\prod_{i=1}^r\mathfrak{S}_{n_i}$. In particular, the stabilizer of $I_n$ in $G^\circ$ is trivial. Hence the orbit of $I_n$ under $G^\circ$ is isomorphic to $G^\circ\cong\prod_{i=1}^r\mathrm{GL}_{n_i}$ as $k$-varieties.
Since $G=G^\circ\cdot(\{I_n\}\times\mathfrak{S}_n)$, every $G^\circ$-orbit on $Y_{M,\red}$ arises in the following way. For $\tau\in\mathfrak{S}_n$, the element $(I_n,\tau)\in G$ carries the orbit of $I_n$ under $G^\circ$ to another $G^\circ$-orbit, and this gives a $k$-isomorphism between the two orbits. Two elements $\tau,\tau'\in\mathfrak{S}_n$ yield the same orbit if and only if $\tau'^{-1}\tau\in\prod_{i=1}^r\mathfrak{S}_{n_i}$. Thus the $G^\circ$-orbits are indexed by $\mathfrak{S}_n/\prod_{i=1}^r\mathfrak{S}_{n_i}\cong\mathfrak{s}$.
Their disjoint union is all of $Y_{M,\red}$, and each orbit is isomorphic to $\prod_{i=1}^r\mathrm{GL}_{n_i}$ as $k$-varieties. These orbits are therefore exactly the connected components of $Y_{M,\red}$.
\end{rem}

\subsection{Triangularization}

\

Let $k$ be a number field and $M\in \mathrm{M}_n(\ok)$. We first figure out the structure of $X_{M,\red}$ under the condition $\prod\limits_{v\in\Omega_{k}}X_{M}(k_{v})\neq\varnothing$.
As $X_{M}$ satisfies local-global principle for rational points by Theorem \ref{hasse}, $\prod\limits_{v\in\Omega_{k}}X_{M}(k_{v})\neq\varnothing$ implies $X_{M}(k)\neq\varnothing$. Hence $M$ is triangularizable over $k$. 

More explicitly, $M$ is similar to $M'=\mathrm{diag}(J(\lambda_1),J(\lambda_2)\ldots ,J(\lambda_r))$ over $k$. Here $\lambda_1,...,\lambda_{r}\in \ok$ are distinct, $J(\lambda_{i})\in \mathrm{M}_{n_{i}}(k)$ is a Jordan canonical form with the single eigenvalue $\lambda_i$, where $n_{1}+...+n_{r}=n$. Now $X_{M}\cong X_{M'}$ as $k$-varieties since similar matrices correspond to isomorphic varieties, and the isomorphism is given by a coordinates change of $\mathbb{A}_{k}^{n^{2}+1}=\Spec \ k[T_{i,j},w]$. So we can assume $M=M'$.

Let $\mathfrak{s}:=\{\sigma:\{1,\ldots,n\}\to\{\lambda_1,\ldots,\lambda_r\}\mid \#\sigma^{-1}(\lambda_i)=n_i\ \text{for each}\ i=1,\ldots,r\}$, together with the action of $\mathfrak{S}_n$ on $\mathfrak{s}$, be defined as in the proof of Theorem~\ref{bm}. Let $X_{M,\sigma}$ be the closed subscheme of $\Spec\ k[T_{i,j},w]$ which is defined by
\[
    w\cdot\det(T)=1,(T^{*}MT)_{i,j}=\begin{cases}
        0 & \text{for $i> j$},\\
        \det(T)\cdot \sigma(i) & \text{for $i=j$}.
    \end{cases}
\]
It is also a closed subscheme of $X_M$, and we have $X_M=\bigsqcup_{\sigma\in \mathfrak{s}} X_{M,\sigma}$ as topological spaces by checking their sets of $\overline{k}$-points. Hence $X_{M,\red}=\bigsqcup_{\sigma\in \mathfrak{s}} X_{M,\sigma,\red}$ as $k$-varieties.

\begin{prop}\label{permutation}
    For any $\sigma,\sigma'\in \mathfrak{s}$, we have $X_{M,\sigma,\red}\cong X_{M,\sigma',\red}$ as $k$-varieties.    
\end{prop}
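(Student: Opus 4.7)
The plan is to reduce to the case where $\sigma$ and $\sigma'$ differ by an adjacent transposition and then to exhibit an explicit $k$-automorphism of the ambient space $\Spec k[T_{i,j},w]$ that carries $X_{M,\sigma,\red}$ onto $X_{M,\sigma',\red}$. Since the symmetric group is generated by adjacent transpositions and the relation of being $k$-isomorphic is transitive, I may assume $\sigma$ and $\sigma'$ differ only at positions $t$ and $t+1$; the case $\sigma_t=\sigma_{t+1}$ is vacuous, so I take $\sigma_t=\lambda_\alpha\neq\lambda_\beta=\sigma_{t+1}$.

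For any $T\in X_{M,\sigma,\red}$, the $(t,t+1)$-block of $U:=T^{-1}MT$ has the form $\begin{pmatrix}\lambda_\alpha & u\\ 0 & \lambda_\beta\end{pmatrix}$, where $u:=w\cdot(T^{*}MT)_{t,t+1}$ is a polynomial in the ambient coordinates. I would consider the block-diagonal matrix $P(u)\in\mathrm{GL}_n$ whose $(t,t+1)$-block is
\[
P_0(u):=\begin{pmatrix}u & (u^2-1)/(\lambda_\beta-\lambda_\alpha)\\ \lambda_\beta-\lambda_\alpha & u\end{pmatrix}
\]
and whose other blocks are identities. A direct $2\times 2$ computation gives $\det P_0(u)=1$ and
\[
P_0(u)^{-1}\begin{pmatrix}\lambda_\alpha & u\\ 0 & \lambda_\beta\end{pmatrix}P_0(u)=\begin{pmatrix}\lambda_\beta & u\\ 0 & \lambda_\alpha\end{pmatrix}.
\]
Set $\Phi(T,w):=(T\cdot P(u(T,w)),\,w)$, a polynomial endomorphism of the ambient affine scheme. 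I would then verify two statements. First, $\Phi$ is an automorphism: its inverse $\Phi'$ is obtained by swapping $\lambda_\alpha$ and $\lambda_\beta$ throughout the formula, since a short calculation shows $P_0'(u)=P_0(u)^{-1}$. Second, $\Phi$ sends $X_{M,\sigma,\red}$ onto $X_{M,\sigma',\red}$: because $P$ is block-diagonal and only modifies rows and columns $t,t+1$, the conjugation $P^{-1}UP$ preserves the vanishing of the strictly lower-triangular entries of $U$ outside that block, while inside the block the displayed calculation yields the upper-triangular form with the swapped diagonal $(\lambda_\beta,\lambda_\alpha)$. Applying the same argument to $\Phi^{-1}$ gives the reverse inclusion, so $\Phi$ restricts to a bijection on $\overline{k}$-points; since $\Phi$ is an automorphism of the ambient affine scheme, it sends reduced closed subschemes to reduced closed subschemes, yielding the required isomorphism of $k$-varieties.

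The main obstacle is choosing the correct shape of $P_0(u)$. The naive choice $\begin{pmatrix}u & 1\\ \lambda_\beta-\lambda_\alpha & 0\end{pmatrix}$ swaps the diagonal entries but forces the $(t,t+1)$-entry of the conjugated block to vanish, so its image lands inside a codimension-one subvariety of $X_{M,\sigma',\red}$ and the resulting morphism fails to be surjective. Starting instead from the ansatz $\begin{pmatrix}u & \epsilon\\ \lambda_\beta-\lambda_\alpha & \mu\end{pmatrix}$, a short computation produces the conjugated block $\begin{pmatrix}\lambda_\beta & \mu\\ 0 & \lambda_\alpha\end{pmatrix}$, so $\mu=u$ is required to let the $(t,t+1)$-entry of $U'$ range over the full $u$-axis (and hence to obtain a bijection); the constraint that $\det P_0(u)$ be a nonzero constant then forces $\epsilon=(u^2-1)/(\lambda_\beta-\lambda_\alpha)$, up to rescaling.
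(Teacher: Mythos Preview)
Your proof is correct and essentially identical to the paper's: both reduce to an adjacent transposition and use the same block-diagonal matrix with $2\times 2$ block $P_0(u)$ (the paper writes $fw$ for your $u$, where $f=(T^{*}MT)_{t,t+1}$) to build a morphism of the ambient space, then verify it restricts to an isomorphism $X_{M,\sigma,\red}\to X_{M,\sigma',\red}$ by checking bijectivity on $\overline{k}$-points. Your closing paragraph motivating how the shape of $P_0(u)$ is forced is a helpful addition not present in the paper.
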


\begin{proof}
     We only need to prove the case where $\sigma'=\tau\cdot\sigma$ for an adjacent transposition $\tau=(t,\,t+1)\in\mathfrak{S}_n$ with $1\leq t\leq n-1$, since such transpositions generate $\mathfrak{S}_n$. Explicitly, this means $\sigma'(t)=\sigma(t+1)$, $\sigma'(t+1)=\sigma(t)$, and $\sigma'(i)=\sigma(i)$ for $i\neq t,t+1$. Write $\sigma(t)=\lambda_\alpha$ and $\sigma(t+1)=\lambda_\beta$ with $\alpha\neq\beta$.

We denote the polynomial $(T^{*}MT)_{t,t+1}\in k[T_{i,j}]$ by $f$, note that $$\begin{pmatrix}
            I_{t-1} & & & \\
             & fw & \frac{f^2 w^2-1}{\lambda_\beta-\lambda_\alpha} & \\
             &\lambda_\beta-\lambda_\alpha & fw & \\
             & & & I_{n-t-1} 
        \end{pmatrix}\in \mathrm{GL}_{n}(k[T_{i,j},w]).$$
This matrix induces a $k$-automorphism of $\Spec\ k[T_{i,j},w]$ which is given by:
    \begin{align*}
        w&\mapsto w\\
        (T_{i,j})_{1\leq i,j\leq n} & \mapsto (T_{i,j})_{1\leq i,j\leq n}\cdot\begin{pmatrix}
            I_{t-1} & & & \\
             & fw & \frac{f^2 w^2-1}{\lambda_\beta-\lambda_\alpha} & \\
             &\lambda_\beta-\lambda_\alpha & fw & \\
             & & & I_{n-t-1} 
        \end{pmatrix},
    \end{align*}
we claim that this automorphism induces an isomorphism from $X_{M,\sigma,\red}$ to $X_{M,\sigma',\red}$. It suffices to show that it induces a bijection from $X_{M,\sigma,\red}(\overline{k})$ to $X_{M,\sigma',\red}(\overline{k})$
since the reduced induced closed
subscheme structure is unique. The remains are straightforward calculations of linear algebra, note that \[
        \begin{pmatrix}
            fw & \frac{f^2 w^2-1}{\lambda_\beta-\lambda_\alpha}\\
            \lambda_\beta-\lambda_\alpha & fw
        \end{pmatrix}^{-1}
        \begin{pmatrix}
            \lambda_\alpha & fw\\
            0 & \lambda_\beta
        \end{pmatrix}
        \begin{pmatrix}
            fw & \frac{f^2 w^2-1}{\lambda_\beta-\lambda_\alpha}\\
            \lambda_\beta-\lambda_\alpha & fw
        \end{pmatrix}=
        \begin{pmatrix}
            \lambda_\beta & fw\\
            0 & \lambda_\alpha
        \end{pmatrix}.
    \]
\end{proof}

\begin{prop}\label{4.4}
    For any $\sigma\in \mathfrak{s}$, we have 
    \[
         X_{M,\sigma,\red}\cong \Af_k^{n(n+1)/2-\sum n_i(n_i+1)/2}\times \prod_{i=1}^r X_{J(\lambda_i),\red}.
    \]
\end{prop}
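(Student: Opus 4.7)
The plan is to reduce, by Proposition~\ref{permutation}, to the natural ordering $\sigma_0:=\overbrace{\lambda_1\cdots\lambda_1}^{n_1}\cdots\overbrace{\lambda_r\cdots\lambda_r}^{n_r}$, then to prove that every $\overline{k}$-point of $X_{M,\sigma_0}$ is block upper triangular relative to the partition $n=n_1+\cdots+n_r$, and finally to let the defining equations decouple block-by-block to produce the asserted product. Taking the reduced structure at the outset is what makes the scheme-theoretic translation of this block decomposition automatic.

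The key geometric step is the following. Let $V_i\subseteq\overline{k}^n$ denote the generalized eigenspace of $M$ for $\lambda_i$; since $M=\mathrm{diag}(J(\lambda_1),\ldots,J(\lambda_r))$, each $V_i$ is exactly the coordinate subspace of the $i$-th block. For any $T\in X_{M,\sigma_0}(\overline{k})$, the partial column span $\mathrm{span}(T_1,\ldots,T_{n_1+\cdots+n_k})$ is $M$-invariant and carries $M$ with characteristic polynomial $\prod_{i\leq k}(\lambda-\lambda_i)^{n_i}$, so it must coincide with $V_1\oplus\cdots\oplus V_k$. Writing $T=(T_{ij})_{1\leq i,j\leq r}$ as blocks of size $n_i\times n_j$, this forces $T_{ij}=0$ for $i>j$. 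Because $X_{M,\sigma_0,\red}$ is reduced, the coordinate functions in these sub-diagonal block positions vanish identically on $X_{M,\sigma_0,\red}$, embedding it into the affine subspace of block upper triangular matrices.

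Inside this ambient affine space the remaining defining equations decouple. For a block upper triangular $T$ one has $\det T=\prod_i\det T_{ii}$, and both $T^{-1}$ and $T^{-1}MT$ are again block upper triangular with $(T^{-1}MT)_{ii}=T_{ii}^{-1}J(\lambda_i)T_{ii}$. Consequently the strictly sub-block-diagonal entries of $T^*MT$ vanish automatically, the strictly upper-triangular blocks $T_{ij}$ ($i<j$) are entirely unconstrained and contribute a factor $\Af_k^{\sum_{i<j}n_in_j}$, and the conditions remaining on each diagonal block $T_{ii}$ (together with the auxiliary $w_i:=1/\det T_{ii}$) are precisely those defining $X_{J(\lambda_i)}$, with $w=\prod_iw_i$. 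Since $J(\lambda_i)$ has a single eigenvalue, $X_{J(\lambda_i)}=X_{J(\lambda_i),\sigma_{0,i}}$, and passing to reduced structures assembles these pieces into the claimed isomorphism. The exponent matches via the identity $\sum_{i<j}n_in_j=\tfrac12(n^2-\sum_in_i^2)=\tfrac{n(n+1)}{2}-\sum_i\tfrac{n_i(n_i+1)}{2}$.

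The main obstacle is the block-upper-triangular reduction of the second paragraph: the generalized-eigenspace argument is natural only at the level of $\overline{k}$-points, and lifting the resulting vanishing to a scheme-theoretic identity is precisely what passing to $X_{M,\sigma_0,\red}$ achieves. Once this is in place, everything else reduces to elementary bookkeeping of block matrix arithmetic.
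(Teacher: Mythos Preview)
Your proposal is correct and follows essentially the same approach as the paper: reduce to $\sigma_0$ via Proposition~\ref{permutation}, identify the $\overline{k}$-points of $X_{M,\sigma_0}$ with block upper triangular matrices having diagonal blocks in $X_{J(\lambda_i)}(\overline{k})$, and invoke uniqueness of the reduced induced closed subscheme structure. You spell out the generalized-eigenspace argument and the exponent bookkeeping more explicitly than the paper (which leaves these as ``straightforward calculations of linear algebra''), but the strategy is the same.
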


\begin{proof}
    According to Proposition \ref{permutation}, we only need to prove the case where 
    \[
    (\sigma(1),\ldots,\sigma(n))=(\overbrace{\lambda_{1},\ldots,\lambda_{1}}^{n_{1}},\overbrace{\lambda_{2},\ldots,\lambda_{2}}^{n_{2}},\ldots,\overbrace{\lambda_{r},\ldots,\lambda_{r}}^{n_{r}}).
    \]

One can easily check that the variety on the right-hand side embeds naturally into
\[
\Spec\ k[T_{i,j},w]/(w\cdot\det(T)-1; T_{i,j}\ \text{for all } (i,j)\in \bigcup_{s=1}^{r-1} B_s),
\]
where for each \(s=1,\dots,r-1\), we define
\[
B_s := \{(i,j)\mid n_1+\cdots+n_s+1 \le i \le n,\; n_1+\cdots+n_{s-1}+1 \le j \le n_1+\cdots+n_s \}.
\]
Hence both sides are reduced closed subvarieties of $\Spec \ k[T_{i,j},w]$. To complete the proof, it suffices to show they have the same set of $\overline{k}$-points. The remains are straightforward calculations of linear
algebra. Note that $X_{M,\sigma,\red}(\overline{k})\subset \mathrm{GL}_{n}(\overline{k})$ is the set of matrices of form
    \[
        \begin{pmatrix}
            T_1 & * & * \\
             & \ddots & * \\
             && T_{r}
        \end{pmatrix},
    \]
    where $T_i\in X_{J(\lambda_i),\red}(\overline{k})$.
\end{proof}
\

Based on previous propositions, we know that the arithmetic properties of $X_{M,\red}$ reduce to $X_{J(\lambda_i),\red}$.
In particular, if $\overline{X_{J(\lambda_i),\red}(k)}= X_{J(\lambda_i),\red}(\A_{k})^{\Br}_{\bullet}$ for each $1\leq i\leq r$ then $\overline{X_{M,\red}(k)}= X_{M,\red}(\A_{k})^{\Br}_{\bullet}$, hence the Brauer--Manin obstruction will be the only
obstruction to the existence of integral or rational points of $X_{M,\red}$ by the same proof of the case of diagonalization.

\begin{prop}\label{prop4.5}
In both cases below, we have  $\overline{X_{J(\lambda_i),\red}(k)}= X_{J(\lambda_i),\red}(\A_{k})^{\Br}_{\bullet}$.
\begin{itemize}
\item $J(\lambda_i)=\lambda_i I_{n_i}$, 
\item $J(\lambda_i)=J_{n_i}(\lambda_i)$ is the Jordan block of size $n_i$ with eigenvalue $\lambda_i$.
\end{itemize}
\end{prop}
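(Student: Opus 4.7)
The plan is to identify $X_{J(\lambda_i),\red}$ with a connected linear algebraic group over $k$ in each of the two cases, so that Lemma~\ref{algebraic group} applies directly. When $J(\lambda_i)=\lambda_i I_{n_i}$, the scalar matrix commutes with every $T$, so the conditions $(T^{*}(\lambda_i I_{n_i})T)_{p,q}=\lambda_i\det(T)\cdot\delta_{p,q}=0$ for $p>q$ hold automatically. Hence $X_{\lambda_i I_{n_i}}$ is cut out in $\Spec\ k[T_{p,q},w]$ by the single equation $w\det(T)=1$, yielding $X_{\lambda_i I_{n_i}}\cong \mathrm{GL}_{n_i}$ already as schemes, and Lemma~\ref{algebraic group} immediately gives $\overline{\mathrm{GL}_{n_i}(k)}=\mathrm{GL}_{n_i}(\A_k)^{\Br}_{\bullet}$.

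For the Jordan block case $J(\lambda_i)=J_{n_i}(\lambda_i)$, I plan to show that $X_{J_{n_i}(\lambda_i),\red}$ is isomorphic as a $k$-variety to the Borel subgroup $B_{n_i}\subset \mathrm{GL}_{n_i}$ of invertible upper triangular matrices, which is a connected linear algebraic group (a semidirect product of the split diagonal torus with the unipotent upper triangular subgroup). The key linear-algebraic input is the statement: for $T\in \mathrm{GL}_{n_i}(\overline{k})$, the matrix $T^{-1}J_{n_i}(\lambda_i)T$ is upper triangular if and only if $T$ itself is upper triangular. To see this, write $N:=J_{n_i}(\lambda_i)-\lambda_i I_{n_i}$ and let $T_j$ denote the $j$-th column of $T$. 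Since the characteristic polynomial $(\lambda-\lambda_i)^{n_i}$ forces every diagonal entry of a triangularization to equal $\lambda_i$, the condition reduces to $NT_j\in \mathrm{span}(T_1,\ldots,T_{j-1})$ for every $j$. Setting $F_j:=\mathrm{span}(T_1,\ldots,T_j)$, one obtains $NF_j\subseteq F_{j-1}$, and iterating yields $F_j\subseteq \ker N^j$. Since $N$ is a single Jordan block of size $n_i$, $\ker N^j=\mathrm{span}(e_1,\ldots,e_j)$ has dimension exactly $j$; equality of dimensions forces $F_j=\mathrm{span}(e_1,\ldots,e_j)$, which is precisely the assertion that $T$ is upper triangular.

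With the identification of $\overline{k}$-points in hand, both $X_{J_{n_i}(\lambda_i),\red}$ and $B_{n_i}$ are reduced closed subvarieties of $\mathrm{GL}_{n_i}$ with the same geometric points, so they coincide by uniqueness of the reduced induced closed subscheme structure. Lemma~\ref{algebraic group} then concludes the Jordan block case. The main obstacle is the scheme-theoretic bookkeeping for $J_{n_i}(\lambda_i)$: one must confirm that the defining equations of $X_{J_{n_i}(\lambda_i)}$ cut out exactly the Borel locus on the level of geometric points (possibly up to a nilpotent thickening, which is precisely the reason we pass to the reduced structure). The flag-stability argument above makes this verification transparent, and the connectedness of both $\mathrm{GL}_{n_i}$ and $B_{n_i}$ ensures that Lemma~\ref{algebraic group} is applicable in both cases.
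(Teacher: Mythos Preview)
Your proposal is correct and follows essentially the same approach as the paper: identify $X_{J(\lambda_i),\red}$ with $\mathrm{GL}_{n_i}$ in the scalar case and with the group of invertible upper triangular matrices in the Jordan block case, then invoke Lemma~\ref{algebraic group}. The only cosmetic difference is that the paper records the Borel as $\mathbb{G}_m^{n_i}\times\Af_k^{n_i(n_i-1)/2}$ and appeals to Lemma~\ref{product} together with Lemma~\ref{algebraic group}, whereas you apply Lemma~\ref{algebraic group} directly to the connected group $B_{n_i}$; both are equally valid.
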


\begin{proof}
If $J(\lambda_i)=\lambda_i I_{n_i}$, one checks that $X_{J(\lambda_i),\red}\cong \mathrm{GL}_{n_i}$ as $k$-varieties.

If $J(\lambda_i)=J_{n_i}(\lambda_i)$, one checks that $X_{J(\lambda_i),\red}(\overline{k})\subset \mathrm{GL}_{n_i}(\overline{k})$ is the set of upper triangular matrices, hence $X_{J(\lambda_i),\red}\cong \mathbb{G}_m^{n_i}\times \Af_k^{n_i(n_i-1)/2}$ as $k$-varieties.

So we apply Lemma \ref{product} and Lemma \ref{algebraic group} to conclude.
\end{proof}

However, the structure of $X_{J(\lambda_i),\red}$ is quite intricate in general. For example, it can be a connected but not irreducible variety, hence it will no longer have a structure of algebraic group. Now the more reasonable object is the stratified Brauer--Manin obstruction.

\section{The stratified Brauer--Manin obstruction}\label{stratified BM}
Let $V$ be a variety over a number field $k$, and $\mathcal{V}$ an integral model of $V$ over $\ok$. Consider the decomposition of $\mathcal{V}$ into its irreducible components (endowed with the reduced induced closed subscheme structure):$$\mathcal{V}=(\bigcup_i \mathcal{V}_i)\bigcup(\bigcup_j \mathcal{V}_j'),$$
where each $\mathcal{V}_i\to \Spec\ok$ is dominant but each $\mathcal{V}_j'\to \Spec\ok$ is not. Therefore $V=\bigcup_i V_i$ is the decomposition of $V$ into its irreducible components, where $V_i=\mathcal{V}_i\times_{\Spec\ok}\Spec \ k$.
\begin{defn}\label{def str}
The stratified Brauer–Manin set of $V$ is defined as $\bigcup_i V_i(\A_k)_{\bullet}^{\Br}$, and we denote it by $V(\A_k)_{\bullet}^{\mathrm{st}\br}$. 
Note that the following inclusions: $V(k)\subset V(\A_k)_{\bullet}^{\mathrm{st}\br}\subset V_{\red}(\A_k)_{\bullet}^{\Br}\subset V(\A_k)_{\bullet}$.

We shall say that the stratified Brauer--Manin obstruction is the only obstruction to the existence of rational (resp. integral) points of $V$ if $V(\A_k)_{\bullet}^{\mathrm{st}\br}\neq \varnothing$ implies $V(k)\neq \varnothing$
(resp. $V(\A_k)_{\bullet}^{\mathrm{st}\br}\bigcap\prod\limits_{v\in \Omega_k}\mathcal{V}(\ov)\neq \varnothing$ implies $\mathcal{V}(\ok)\neq \varnothing$).
\end{defn}
Now we continue the discussion on the problem of triangularization from the previous section, and we first put forward two programmatic conjectures herein.

\begin{conj}\label{co1}
Let $k$ be a number field and $M\in \mathrm{M}_n(\ok)$ with $X_M(k)\neq \varnothing$. Then every irreducible component of $X_M$ (with reduced
induced closed subscheme structure) is a smooth and $k$-rational variety.
\end{conj}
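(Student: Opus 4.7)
The plan is to reduce the conjecture, via the decompositions already established in Section \ref{to variety}, to a statement about the Springer-type variety attached to a single eigenvalue, and then attack smoothness and $k$-rationality for the irreducible components there.

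First, since $X_M(k)\neq\varnothing$, Theorem \ref{hasse} together with the reductions from the preamble to Proposition \ref{permutation} lets one assume $M=\mathrm{diag}(J(\lambda_1),\dots,J(\lambda_r))$ with distinct $\lambda_i\in\ok$ and each $J(\lambda_i)\in\mathrm{M}_{n_i}(k)$ a Jordan-type matrix with single eigenvalue $\lambda_i$. Propositions \ref{permutation} and \ref{4.4} then identify every stratum $X_{M,\sigma,\red}$ with a trivial affine bundle
\[
\Af_k^{n(n+1)/2-\sum_{i=1}^{r}n_i(n_i+1)/2}\times\prod_{i=1}^{r}X_{J(\lambda_i),\red}.
\]
Because the $X_{M,\sigma,\red}$ form a decomposition of $X_{M,\red}$ into disjoint open and closed subvarieties, every irreducible component of $X_M$ is a component of exactly one stratum. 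Trivial affine bundles and finite products both preserve smoothness and $k$-rationality, so the conjecture reduces to proving it for each factor $X_{J(\lambda_i),\red}$.

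Next I would translate to the nilpotent picture by setting $N:=J(\lambda_i)-\lambda_i I_{n_i}$: the equations defining $X_{J(\lambda_i),\red}$ in $\mathrm{GL}_{n_i}$ are invariant under the shift, so $X_{J(\lambda_i),\red}=X_{N,\red}$. Sending $T\in\mathrm{GL}_{n_i}$ to the complete flag $V_\bullet$ with $V_j=\mathrm{span}(Te_1,\dots,Te_j)$ exhibits $X_{N,\red}$ as a Zariski-locally trivial $B$-torsor over the Springer fiber $\mathcal{B}_N$, where $B\subset\mathrm{GL}_{n_i}$ is the upper-triangular Borel. Because $B\cong\Gm^{n_i}\times\Af_k^{n_i(n_i-1)/2}$ is itself smooth and $k$-rational, smoothness and $k$-rationality of a component of $X_{N,\red}$ are equivalent to those of its image component of $\mathcal{B}_N$. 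I would then analyze the components of $\mathcal{B}_N$ via the combinatorics of standard Young tableaux of the Jordan shape of $N$ (Spaltenstein): each component $\mathcal{B}_N^T$ admits a paving by affine cells, and the unique dense cell yields $k$-rationality for free. For smoothness one would induct on $n_i$ by locating the cell of $T$ that contains the largest entry, peeling off the corresponding step of every flag in $\mathcal{B}_N^T$, and expressing $\mathcal{B}_N^T$ as a fibration over a Springer component attached to a smaller nilpotent, with fiber a $\mathbb{P}^1$, an affine space, or a Grassmannian. This approach is clean for hook shapes $(n,1^{m})$ and for two-row shapes; the hook case is precisely the content of Theorem \ref{th5.3}.

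The main obstacle will be smoothness for \emph{arbitrary} Jordan types, since Spaltenstein components are known to be genuinely singular for certain shapes in large dimension. A plausible workaround is to stay on the $B$-torsor $X_{N,\red}$ itself and build the smooth-rational structure directly upstairs: parametrize the columns of $T$ one at a time in a way dictated by a fixed tableau, and verify at each step that the resulting open piece of the component of $X_{N,\red}$ is a Zariski-locally trivial $\Gm$-bundle or $\Af_k^1$-bundle over the previous step. If this tower can be constructed uniformly for every tableau, then smoothness and $k$-rationality are visible on $X_{N,\red}$ even when they are obscured on $\mathcal{B}_N$. Carrying this column-by-column construction through for arbitrary Jordan shape is the hardest step of the proposed proof.
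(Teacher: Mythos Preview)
The statement is a \emph{conjecture} in the paper, not a theorem: only the special case where each $J(\lambda_i)$ has the form $\mathrm{diag}(\lambda I_m,J_n(\lambda))$ is established (Theorem~\ref{th5.3}), and the remark after that proof says the general case remains open. For the case the paper does treat, it proceeds by explicit equations: Lemma~\ref{lem:newdef} rewrites the defining ideal of $X_M$, Lemma~\ref{V_r} covers $X_M$ by closed pieces $V_{\mathbf r}$ indexed by combinatorial data $\mathbf r\in\mathfrak{R}_{m,n}$, and Lemma~\ref{SA W} handles each $V_{\mathbf r}$ by an inductive fibration argument. Your identification of $X_{N,\red}$ as a Zariski-locally trivial $B$-torsor over the Springer fibre $\mathcal{B}_N$ is correct and is a genuine conceptual insight absent from the paper; for hook shapes it recovers the smoothness and rationality part of Theorem~\ref{th5.3} more cleanly, since those Springer-fibre components are known to be smooth.

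Your proposed workaround for general Jordan type, however, is internally inconsistent and cannot succeed. You correctly state earlier that, because the $B$-torsor map is smooth, smoothness of a component of $X_{N,\red}$ is \emph{equivalent} to smoothness of its image component in $\mathcal{B}_N$: for a flat local map with regular closed fibre, the base is regular if and only if the total space is. This equivalence goes both ways, so no column-by-column construction performed upstairs on $X_{N,\red}$ can manufacture smoothness that $\mathcal{B}_N$ lacks. Since you yourself observe that Springer-fibre components in type~A are known to be singular for certain Jordan types (Fresse, \emph{Ann.\ Inst.\ Fourier} \textbf{59} (2009); Fresse--Melnikov, \emph{Selecta Math.} \textbf{16} (2010)), your torsor identification in fact indicates that Conjecture~\ref{co1} is \emph{false} once $n$ is large enough---entirely consistent with the authors' verification only for $n\leq 5$. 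What you have uncovered is not a proof strategy but a route to counterexamples.
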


\begin{conj}\label{co2}
Let $k$ be a number field and $M\in \mathrm{M}_n(\ok)$. Then $X_{M}(k)$ is a dense subset of the stratified Brauer--Manin set of $X_M$.
In particular, the stratified Brauer--Manin obstruction is the only obstruction to the existence of rational and integral points of $X_M$.
\end{conj}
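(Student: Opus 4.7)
My plan is to reduce the general conjecture to the arithmetic of individual irreducible components of Springer-type varieties, and then to attack those via geometric representation theory. Combining Proposition \ref{permutation} and Proposition \ref{4.4} exactly as in the proof of Theorem \ref{th5.3}, I would write $X_M = \bigsqcup_{\sigma \in \mathfrak{s}} X_{M,\sigma,\red}$ and decompose each piece as $X_{M,\sigma,\red} \cong \Af_k^{N_\sigma} \times \prod_{i=1}^r X_{J(\lambda_i),\red}$. Since the Brauer--Manin set is compatible with products (Lemma \ref{product}) and disjoint unions (Lemma \ref{disjoint}), and $\Af_k^{N_\sigma}$ satisfies strong approximation, the conjecture reduces to the following: for each Jordan canonical form $J$ with a single eigenvalue $\lambda$, every irreducible component $V$ of $X_{J,\red}$ satisfies $\overline{V(k)} = V(\Ad_k)_\bullet^{\br}$ (and the analogous statement for integral models). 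Shifting by $\lambda I$ replaces $J$ by a nilpotent matrix $N$.

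The variety $X_{N,\red} = \{T \in \mathrm{GL}_n : T^{-1} N T \text{ is upper triangular}\}$ is stable under the right action of the Borel $B \subset \mathrm{GL}_n$ of upper-triangular matrices, and the natural projection $X_{N,\red} \to \mathrm{GL}_n/B$ exhibits $X_{N,\red}$ as a Zariski-locally trivial $B$-torsor over the type-$A$ Springer fiber $\mathcal{B}_N = \{V_\bullet \in \mathrm{Fl}_n : N V_i \subseteq V_{i-1}\}$. Since $B$ is connected, smooth and $k$-rational, this identification reduces the questions of irreducibility, smoothness, rationality, and arithmetic behaviour of $X_{N,\red}$ to the corresponding questions for $\mathcal{B}_N$. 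By Spaltenstein's theorem, the irreducible components of $\mathcal{B}_N$ are indexed by standard Young tableaux of shape $\mu(N)$, are geometrically integral of the same dimension, and each admits a dense open subset that is an iterated affine-space bundle over a partial flag variety. Consequently each irreducible component of $X_{N,\red}$ is geometrically integral and $k$-rational.

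On the smooth locus of a $k$-rational variety the equality $\overline{V^{\mathrm{sm}}(k)} = V^{\mathrm{sm}}(\Ad_k)_\bullet^{\br}$ follows from standard Brauer--Manin arguments for rational varieties, and Lemma \ref{codim 2} extends this to all of $V$ when the singular locus has codimension at least $2$. This is precisely where the principal obstacle lies. For hook shapes $\mu = (n, 1^m)$ the Spaltenstein components are smooth, which is why Theorem \ref{th5.3} goes through; for general $\mu$ the components are genuinely singular and the singular locus can fail to have codimension $\geq 2$. Two plausible routes around this are: (i) use a Bott--Samelson-type resolution $\widetilde{V} \to V$, prove the Brauer--Manin conclusion on $\widetilde{V}$, and descend via properness and birationality together with a comparison of Brauer groups; or (ii) produce an explicit cellular paving of each component compatible with the integral model $\mathcal{X}_M$ and apply strong approximation cell by cell. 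The integral-point half of the conjecture is strictly harder than the rational one and will require a strong approximation argument off the archimedean places together with a careful analysis of $\mathcal{X}_M$ at each bad prime; controlling the singularities in a manner compatible with the integral structure is, to my mind, the decisive difficulty that keeps the conjecture open in its full generality.
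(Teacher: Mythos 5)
You have not proved the statement, and neither does the paper: Conjecture \ref{co2} is left open there, being established only in the special case of Theorem \ref{th5.3} (matrices similar to $\mathrm{diag}(\lambda I_m, J_n(\lambda))$) and, per the closing remark, for matrices of order $\leq 5$. Your reduction steps do track the paper faithfully: the decomposition $X_{M,\red}=\bigsqcup_\sigma X_{M,\sigma,\red}$ and Proposition \ref{4.4} reduce everything to a single Jordan form with one eigenvalue, and your Springer-fiber/Spaltenstein reformulation of $X_{N,\red}$ as a $B$-torsor over $\mathcal{B}_N$ is an attractive repackaging of what the paper does by hand: the rank stratification $V_{\mathbf{r}}$ of Lemma \ref{V_r}, indexed by the increasing sequences $\mathbf{r}\in\mathfrak{R}_{m,n}$, is exactly a Spaltenstein-type decomposition in the hook case. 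But your proposal then stops precisely at the point where the paper's actual work begins (the explicit good fibrations of Lemmas \ref{sl} and \ref{SA W} and the induction in the proof of Theorem \ref{th5.3}), and you concede yourself that routes (i) and (ii) are not carried out. So what you have is a plausible research program, not a proof of the conjecture.

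Beyond incompleteness, two intermediate claims are not justified and would fail as stated. First, the assertion that on the smooth locus of a $k$-rational variety one has $\overline{V^{\mathrm{sm}}(k)}=V^{\mathrm{sm}}(\A_k)_\bullet^{\Br}$ by ``standard Brauer--Manin arguments for rational varieties'' is not a known result: strong approximation with Brauer--Manin obstruction is established for connected linear algebraic groups (Lemma \ref{algebraic group}), for complements of codimension $\geq 2$ closed subsets of affine space (Lemma \ref{codim 2}), and for varieties fibred nicely over such (Lemma \ref{lem5.4}), but not for arbitrary open $k$-rational varieties; this is why the paper must construct concrete fibrations component by component rather than invoke rationality. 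Second, Lemma \ref{codim 2} concerns $\Af_k^n$ minus a codimension-$2$ closed subset; it does not let you pass from $V^{\mathrm{sm}}$ to $V$ for a general (possibly non-normal, possibly singular in codimension $1$) component of a Springer fiber, and you give no argument that the relevant singular loci are small or that approximation properties descend across them. Finally, the integral-point half needs a statement at the level of the specific integral model $\mathcal{X}_M$ (as in Definition \ref{goodfib} and the surjectivity of $\mathcal{W}(\O_v)\to\mathcal{V}(\O_v)$), which neither a Bott--Samelson resolution nor an abstract cellular paving automatically provides; this is the concrete gap that keeps the conjecture open.
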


The following example demonstrates the necessity of the stratified Brauer--Manin set: there exists $M$ such that $X_M(k)$ is dense in $X_M(\A_k)^{\mathrm{st}\br}_\bullet$ but not in $X_{M,\red}(\A_k)^{\br}_\bullet$.

\begin{exam}
Let $M = \mathrm{diag}(\lambda,J_2(\lambda))$. One can check that
\[
    X_{M,\red} \;\cong\; \Spec\ k[T_{i,j}\mid_{1\leq i,j\leq 3}, w]/(w\cdot\det(T_{i,j})-1,\,
    T_{3,1},\,T_{1,1}\cdot T_{3,2}),
\]
with irreducible components
\[
    X_1 = \{ T_{3,1} = T_{1,1} = 0 \}, \quad X_2 = \{ T_{3,1} = T_{3,2} = 0 \};
\]
for the details of this isomorphism, see Remark~\ref{another def} and 
Lemma~\ref{lem:newdef} below.
Consider the $k$-points
\[
    R = \begin{pmatrix} 0&1&0\\1&0&0\\0&0&1 \end{pmatrix} \in X_1 \cap X_2, \quad
    P = \begin{pmatrix} 1&1&0\\1&0&0\\0&0&1 \end{pmatrix} \in X_2 \setminus X_1, \quad
    Q = \begin{pmatrix} 0&1&0\\1&0&0\\0&1&1 \end{pmatrix} \in X_1 \setminus X_2.
\]
The points $R$ and $P$ (resp. $R$ and $Q$) are joined by an affine line
$\gamma_P: \Af^1_k \hookrightarrow X_2$ (resp. $\gamma_Q: \Af^1_k \hookrightarrow X_1$) defined by
\[
    \gamma_P(t) = \begin{pmatrix} t&1&0\\1&0&0\\0&0&1 \end{pmatrix}, \quad
    \gamma_Q(t) = \begin{pmatrix} 0&1&0\\1&0&0\\0&t&1 \end{pmatrix}.
\]

For any $\alpha \in \Br(X_{M,\red})$, the pullbacks $\gamma_P^*(\alpha)$ and $\gamma_Q^*(\alpha)$ lie in $\Br(\mathbb{A}^1_k)=\Br(k)$. Hence the endpoint evaluations satisfy
\[
\alpha(P)=\alpha(R)=\alpha(Q)\in \Br(k),
\]
and we denote this element by $\beta$.
Now fix a finite place $v_0$ of $k$, and define an adelic point $x=(x_v)_{v\in\Omega_k}$ by setting $x_{v_0}=P$ and $x_v=Q$ for all $v\neq v_0$. For every place $v$, the local evaluation $\alpha(x_v)$ is the natural restriction of $\beta \in \Br(k)$ to $\Br(k_v)$. We conclude that
$x \in X_{M,\red}(\A_k)^{\Br}_{\bullet}$ by global reciprocity.

We claim that $x$ is not in the closure of $X_M(k)$. Choose a finite 
place $v_1 \neq v_0$, and consider the open neighborhood of $x$ defined by $T_{1,1} \neq 0$ at $v_0$, $T_{3,2} \neq 0$ at $v_1$, and $\mathcal{X}_M(\mathcal{O}_v)$ at all other places. 
If $y \in X_M(k)$ lies in this neighborhood, then $T_{1,1}(y) \neq 0$ in $k$, 
hence $T_{3,2}(y) = 0$ in $k$ since $y \in X_M$, contradicting $T_{3,2}(y) \neq 0$ at $v_1$. 
Hence $\overline{X_M(k)} \subsetneqq  X_{M,\red}(\A_k)^{\br}_\bullet$.

On the other hand, one can check that each $X_i$ is isomorphic to $\mathrm{GL}_2 \times \Gm \times \Af^2_k$, hence is a connected linear algebraic group. Therefore Lemma~\ref{algebraic group} and Lemma~\ref{product} give $\overline{X_i(k)} = X_i(\A_k)^{\br}_\bullet$ for $i = 1, 2$, which shows that $X_M(k)$ is dense in $X_M(\A_k)^{\mathrm{st}\br}_\bullet$.
\end{exam}

\begin{rem}
    Appendix \ref{explicit appendix} gives an explicit matrix $M$ for which the Brauer--Manin obstruction does obstruct the existence of integral points on $X_M$.
\end{rem}

\

According to Proposition~\ref{4.4}, it suffices to consider the situation where $M$ is a Jordan canonical form with a single eigenvalue. 

\begin{thm}\label{th5.3}
If $M\in \mathrm{M}_{m+n}(\ok)$ is similar to $\begin{pmatrix}
            \lambda I_m & 0\\
            0 & J_{n}(\lambda)
        \end{pmatrix}$ over $k$ for some $\lambda\in\ok$, then Conjecture \ref{co1} and Conjecture \ref{co2} are true.
\end{thm}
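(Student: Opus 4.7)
The plan is, after standard reductions, to describe each irreducible component of $X_{M,\red}$ explicitly as a product of a connected linear algebraic group and an affine space, after which Lemma~\ref{product} and Lemma~\ref{algebraic group} finish the job.

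First, using the isomorphism $X_M\cong X_{M'}$ for $k$-similar matrices, I assume $M=\mathrm{diag}(\lambda I_m,J_n(\lambda))$; the further identification $X_M\cong X_{M-\lambda I_{m+n}}$ reduces to $\lambda=0$, so $M$ is nilpotent of Jordan type $(n,1^m)$. Then the flag map $\mathrm{GL}_{m+n}\to\mathrm{Fl}_{m+n}$ realizes $X_{M,\red}$ as the preimage of the Springer fibre $\mathcal{B}_M$ of hook type $(n,1^m)$, and this exhibits $X_{M,\red}\to\mathcal{B}_M$ as a principal $B^+$-bundle (where $B^+\subset\mathrm{GL}_{m+n}$ is the Borel of upper-triangular invertibles, which is special in Serre's sense, hence the bundle is Zariski-locally trivial). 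The irreducible components of $\mathcal{B}_M$ are indexed by standard Young tableaux of shape $(n,1^m)$, equivalently by subsets $S\subseteq\{2,\ldots,m+n\}$ of cardinality $m$ (the column labels); accordingly $X_{M,\red}=\bigcup_{S} X_{M,S,\red}$. Each $\mathcal{B}_{M,S}$ is a smooth projective $k$-rational variety, as can be verified by induction on $m$ by splitting according to the position of $F_1$ and reducing $M$ to a smaller Jordan type on $V/F_1$.

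The technical heart is to establish, for every $S$, an isomorphism
\[
    X_{M,S,\red}\;\cong\;\mathrm{GL}_{m+1}\,\times\,\mathbb{G}_m^{\,n-1}\,\times\,\Af_k^{\,\binom{m+n}{2}-\binom{m+1}{2}}
\]
of $k$-varieties. I construct it by using the free right $B^+$-action on $X_{M,S,\red}$ to put each $T$ into a normal form adapted to $S$: the columns of $T$ indexed by $S$ together with the first column indexed by $R=\{1,\ldots,m+n\}\setminus S$ are organized into a free basis of $k^m\oplus \ker J_n(0)$, giving the $\mathrm{GL}_{m+1}$ factor; the remaining columns indexed by $R$ have their projections onto the second block $k^n$ normalized into the partial triangularizing basis of $J_n(0)$ furnished by Proposition~\ref{prop4.5}, so that the $n-1$ nonzero diagonal entries of the triangularization give the $\mathbb{G}_m^{n-1}$ factor; finally, the residual off-diagonal entries (both from the $B^+$-normalization and from the triangularization) are free and supply the $\Af^N$ factor, with $N=\binom{m+n}{2}-\binom{m+1}{2}$. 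A dimension check $\dim X_{M,S,\red}=\dim B^+ +\dim\mathcal{B}_{M,S}=(m+n)+\binom{m+n}{2}+\binom{m+1}{2}$ matches $(m+1)^2+(n-1)+N$.

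Once the decomposition is in hand, Conjecture~\ref{co1} is immediate since $\mathrm{GL}_{m+1}$, $\mathbb{G}_m^{n-1}$ and $\Af^N$ are each smooth and $k$-rational. For Conjecture~\ref{co2}, Lemma~\ref{product} combined with Lemma~\ref{algebraic group} applied to the connected linear algebraic group $\mathrm{GL}_{m+1}\times\mathbb{G}_m^{n-1}$ gives $\overline{X_{M,S,\red}(k)}=X_{M,S,\red}(\Ad_k)_\bullet^{\br}$ for each $S$; taking unions yields
\[
    \overline{X_{M,\red}(k)}\;\supseteq\;\bigcup_{S}X_{M,S,\red}(\Ad_k)_\bullet^{\br}\;=\;X_{M,\red}(\Ad_k)_\bullet^{\mathrm{st}\br},
\]
which is the rational version. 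For the integral version, the $B^+$-normalization and Proposition~\ref{prop4.5} use only polynomial identities with $\ok$-coefficients, so the isomorphism descends to integral models; openness of $\prod_v\mathcal{X}_{M,S,\red}(\mathcal{O}_v)$ in the reduced adelic space combined with density of rational points in the Brauer--Manin set then produces an integral rational point whenever the integral stratified Brauer--Manin set is nonempty.

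The main obstacle will be carrying out the $B^+$-normalization in the key structural claim uniformly for every $S$. One must verify that the conditions $MT_i\in F_{i-1}$ for $i\in R\setminus\{1\}$ collapse in the normalized coordinates to exactly the partial triangularization relation for $J_n(0)$ (placing no additional constraint on the $S$-columns beyond what is needed to form the $\mathrm{GL}_{m+1}$-basis), and that the resulting parameters really realize the claimed product decomposition globally rather than merely on a single $B^+$-chart.
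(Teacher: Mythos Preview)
Your Springer-theoretic framing is elegant and the component count via standard Young tableaux of hook shape is correct, matching the paper's index set $\mathfrak{R}_{m,n}$. However, the central structural claim
\[
    X_{M,S,\red}\;\cong\;\mathrm{GL}_{m+1}\times\mathbb{G}_m^{\,n-1}\times\Af_k^{\,\binom{m+n}{2}-\binom{m+1}{2}}
\]
is \emph{false} in general, so the ``$B^+$-normalization'' you propose cannot be carried out uniformly. Take $m=2$, $n=2$, and consider the component $V_{(2)}$ (in the paper's notation), namely the locus in $\mathrm{GL}_4$ where $T_{41}=T_{42}=0$ and $T_{11}T_{22}-T_{12}T_{21}=0$; geometrically this is the component over which $e_3\in F_2\subseteq\ker M$. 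The corresponding Springer component $\mathcal{B}_S$ is a $\mathbb{P}^1\times\mathbb{P}^1$-bundle over $\mathbb{P}^1$ with $\mathrm{Pic}(\mathcal{B}_S)\cong\Z^3$. Using Sansuc's sequence for the $B^+$-torsor $V_{(2)}\to\mathcal{B}_S$, one finds $\mathcal{O}^*(V_{(2)})/k^*=\ker\bigl(\hat{B^+}\to\mathrm{Pic}(\mathcal{B}_S)\bigr)\cong\Z$, generated by $\det T$. By contrast, $\mathcal{O}^*(\mathrm{GL}_3\times\mathbb{G}_m\times\Af^3)/k^*\cong\Z^2$. So these two varieties are not isomorphic. (The other two components $V_{(1)}$ and $V_{(3)}$ \emph{are} isomorphic to $\mathrm{GL}_3\times\mathbb{G}_m\times\Af^3$: their Springer components are copies of the full flag variety $\mathrm{Fl}_3$ with $\mathrm{Pic}\cong\Z^2$, and the kernel of $\Z^4\to\Z^2$ has rank $2$.) The underlying reason is that different hook Springer components have different Picard ranks, so the $B^+$-torsors above them cannot all be the same product variety.

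The paper circumvents this entirely. Rather than seeking a global product decomposition, it writes down the components $V_{\mathbf r}$ by explicit minor equations (Lemmas~\ref{lem:newdef} and~\ref{V_r}), and then proves smoothness, $k$-rationality, and $\overline{V_{\mathbf r}(k)}=V_{\mathbf r}(\A_k)^{\Br}_\bullet$ by an inductive fibration argument (Lemma~\ref{SA W} and Lemma~\ref{lem5.4}), using fibrations whose generic fibres are affine spaces or the varieties $\mathrm{SL}_{s,r}$. Your approach does salvage Conjecture~\ref{co1}: each $X_{M,S,\red}$ is a $B^+$-torsor over a smooth projective rational variety, hence smooth and $k$-rational. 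But for Conjecture~\ref{co2} you need an argument that does not rely on the false product decomposition; the fibration method of the paper is one such.
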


The rest of this section is devoted to the proof of Theorem \ref{th5.3}.
Without loss of generality, we may assume that $M=\mathrm{diag}(\lambda I_m,J_n(\lambda))$, since similar matrices correspond to isomorphic varieties.
The elementary cases $M = \lambda I_m$ and $M = J_{n}(\lambda)$ have already been settled in Proposition \ref{prop4.5}.
In what follows, we therefore restrict to the situation where $m \geq 1$ and $n \geq 2$.

\

For a morphism \( f: W \to V \) of \( k \)-varieties, the technique of “spreading out” (cf.~\cite[Theorem 3.2.1]{poo23}) ensures that there exists a finite set \( S \subset \Omega_k \), containing all archimedean places, and a model \( \mathcal{W} \to \mathcal{V} \) over \( \mathcal{O}_{k,S} \).

\begin{defn}\label{goodfib}
A morphism \( f: W \to V \) is called a \emph{good fibration} if there exist a finite set \( S \subset \Omega_k \), containing all archimedean places, and a model \( \mathcal{W} \to \mathcal{V} \) over \( \mathcal{O}_{k,S} \), such that:
\begin{itemize}
    \item \( f \) is smooth;
    \item for all but finitely many \( v \in \Omega_k \setminus S \), the induced map \( \mathcal{W}(\mathcal{O}_v) \to \mathcal{V}(\mathcal{O}_v) \) is surjective;
    \item for every \( x \in V(k) \), the fiber \( W_x \) satisfies \( \overline{W_x(k)} = W_x(\A_k)_\bullet \).
\end{itemize}
\end{defn}
In fact, the second condition is satisfied when all fibers of $f$ are nonempty and geometrically connected, cf. \cite[Theorem 4.5]{con12}.
In what follows, we shall always use the fact that all fibers are geometrically integral to verify the second condition stated above.

\begin{lem}\label{lem5.4}
If $f:W\to V$ is a good fibration, then
\begin{enumerate}
\item $\overline{V(k)}=V(\A_k)_{\bullet}^{\Br}$ implies $\overline{W(k)}=W(\A_k)_{\bullet}^{\Br}$,
\item $\overline{V(k)}=V(\A_k)_{\bullet}$ implies $\overline{W(k)}=W(\A_k)_{\bullet}$.
\end{enumerate}
\end{lem}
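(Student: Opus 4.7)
The plan is to prove both (1) and (2) simultaneously by the classical fibration argument: push an adelic point of $W$ down to $V$, replace the image by a nearby $k$-rational point $x$ using the hypothesis on $V$, then lift back through $f$ using its smoothness together with strong approximation on the fiber $W_x$. Fix $(w_v) \in W(\A_k)_\bullet^{\Br}$ in case (1) or $(w_v) \in W(\A_k)_\bullet$ in case (2); the goal is to show $(w_v) \in \overline{W(k)}$ (the reverse inclusion being standard, since the Brauer--Manin pairing cuts out a closed set containing $W(k)$).

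First I would push the adelic point down. Setting $x_v := f(w_v)$, functoriality of $f^{*}\colon \Br(V) \to \Br(W)$ and of the Brauer--Manin pairing give $\sum_v \inv_v(\alpha(x_v)) = \sum_v \inv_v(f^{*}\alpha(w_v)) = 0$ for every $\alpha \in \Br(V)$, so $(x_v) \in V(\A_k)_\bullet^{\Br}$ in case (1); in case (2), $(x_v) \in V(\A_k)_\bullet$ is automatic. Next, fix a finite set $T \subset \Omega_k$ containing the archimedean places, the finite set $S$ from the definition of a good fibration, the (finitely many) exceptional places from the second bullet of that definition, and any additional places at which I want to approximate $(w_v)$. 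The hypothesis $\overline{V(k)} = V(\A_k)_\bullet^{\Br}$ (resp.\ $= V(\A_k)_\bullet$) produces $x \in V(k)$ arbitrarily close to $x_v$ for each $v \in T$; after enlarging $T$ by the finitely many places at which $x$ might fail to be integral, one also has $x \in \mathcal{V}(\mathcal{O}_v)$ for every $v \notin T$.

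The core step is then to manufacture an adelic point of the fiber $W_x$. At $v \in T$, smoothness of $f$ at $w_v$ combined with $x$ being close to $x_v = f(w_v)$ lets the implicit function theorem over $k_v$ produce $w'_v \in W_x(k_v)$ arbitrarily close to $w_v$. At $v \notin T$, the surjectivity $\mathcal{W}(\mathcal{O}_v) \twoheadrightarrow \mathcal{V}(\mathcal{O}_v)$ yields $w'_v \in \mathcal{W}(\mathcal{O}_v)$ mapping to $x$, so in particular $w'_v \in W_x(k_v)$. The tuple $(w'_v)$ is then an adelic point of $W_x$. Finally, the third bullet of the good-fibration definition, $\overline{W_x(k)} = W_x(\A_k)_\bullet$, produces $w \in W_x(k) \subseteq W(k)$ approximating $(w'_v)_{v\in T}$ and hence $(w_v)_{v\in T}$ to the desired precision.

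The main obstacle will be the lifting step: one has to simultaneously arrange open-ball control at $v \in T$ via the $k_v$-analytic implicit function theorem (equivalently, the existence of local sections of a smooth morphism) and integrality at $v \notin T$ via the surjectivity $\mathcal{W}(\mathcal{O}_v) \twoheadrightarrow \mathcal{V}(\mathcal{O}_v)$, while checking that the successive enlargements of $T$ (to absorb $S$, the exceptional places for integrality, and the places where $x$ is non-integral) do not interfere with the approximation demanded at the original places. Once this composite adelic point on the fiber is constructed, strong approximation on $W_x$ closes the argument cleanly, and cases (1) and (2) proceed in parallel by simply replacing the Brauer--Manin sets by the full reduced adelic spaces throughout.
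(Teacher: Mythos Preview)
Your approach is essentially the same as the paper's: push down the adelic point, approximate on $V$, lift to the fiber via smoothness and integral surjectivity, then apply strong approximation on the fiber. One small point needs care: when you approximate $(x_v)$ by $x \in V(k)$ using the density of $V(k)$ in $V(\A_k)_\bullet^{\Br}$, the adelic topology already lets you demand $x \in \mathcal{V}(\ov)$ for all $v \notin T$ simultaneously with the approximation at $v \in T$ (a basic open set in the restricted product has the form $\prod_{v\in T} U_v \times \prod_{v \notin T} \mathcal{V}(\ov)$). Your post-hoc enlargement of $T$ to absorb the non-integral places of $x$ introduces a gap: at those new places $v$ you no longer have $x$ close to $x_v$, so the implicit function theorem does not produce a lift, and integral surjectivity does not apply either since $x \notin \mathcal{V}(\ov)$ there; nothing in the definition of a good fibration guarantees $W_x(k_v)\neq\varnothing$ at such places. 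The paper avoids this by approximating in the adelic (not product) topology from the start, so no enlargement is needed; with this adjustment your argument matches the paper's exactly.
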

\begin{proof}
\
\begin{enumerate}[leftmargin=*, labelsep=0.5em, align=left]
    \item There exists a finite set of places $S$ containing all archimedean places such that $f$ has a model $\mathcal{W}\to\mathcal{V}$ over $\O_{k,S}$. By expanding $S$, we can assume for all $v\in \Omega_{k}\setminus S$, the map $\mathcal{W}(\ov)\to\mathcal{V}(\ov)$ is surjective.

Without loss of generality, we assume that $W(\A_k)^{\br}_\bullet$ is nonempty.  
Then take a nonempty open subset $\Delta$ of $W(\A_k)_{\bullet}^{\Br}$, we want to show that $W(k)\cap \Delta\neq\varnothing$. 
We may assume $\Delta$ is of the form $(\prod\limits_{v\in T}U_v\times \prod\limits_{v\notin T}\mathcal{W}(\ov))\cap W(\A_k)_{\bullet}^{\Br}$, where $T$ is a finite set of places containing $S$, $U_v$ is an open subset of $W(k_v)$. 
Also there exists a point $(\alpha_v)_{v\in T}\times (\beta_v)_{v\notin T}\in \prod\limits_{v\in T}U_v\times \prod\limits_{v\notin T}\mathcal{W}(\ok)$ that is orthogonal to $\Br (W)$. 

By implicit function theorem, $f(U_v)\subset V(k_v)$ is an open subset, then  $(\prod\limits_{v\in T}f(U_v)\times \prod\limits_{v\notin T}\mathcal{V}(\ov))\cap V(\A_k)_{\bullet}^{\Br}$ is an open subset of $V(\A_k)_{\bullet}^{\Br}$ containing $(f(\alpha_v))_{v\in T}\times (f(\beta_v))_{v\notin T}$. 
Hence there exists an element $x\in V(k)$ contained in $\prod\limits_{v\in T}f(U_v)\times \prod\limits_{v\notin T}\mathcal{V}(\ov)$. 
The fiber $W_x$ satisfies $\overline{W_x(k)}=W_x(\A_k)_{\bullet}$. Moreover $x$ is an $\O_v$-point of $\mathcal{V}$ for all $v\notin T$, we can define $\mathcal{W}_{x,v}$ as the fiber of $\mathcal{W}$ over this $\O_v$-point. By construction we know that $\prod\limits_{v\in T}(U_v\cap W_x(k_v))\times \prod\limits_{v\notin T}\mathcal{W}_{x,v}(\ov)$ is a nonempty open subset of $W_{x}(\A_k)_{\bullet}$.
Therefore there exists an element $x'\in W_x(k)$ contained in $\prod\limits_{v\in T}(U_v\cap W_x(k_v))\times \prod\limits_{v\notin T}\mathcal{W}_{x,v}(\ov)\subset \Delta$.
This completes the proof.
\item Similar as proof of (1), except that the Brauer group is omitted.
\end{enumerate}
\end{proof}

\begin{cor}\label{coro5.5}
Given integers $n\geq2$, and $m_i\geq2$ for each $1\leq i\leq n$.
\begin{enumerate}
\item $W=\Spec \ k[T_{i,j}, 1\leq i\leq n,1\leq j\leq m_i]/(1-\sum\limits_{1\leq i\leq n}\prod\limits_{1\leq j\leq m_i}T_{i,j})$ satisfies $\overline{W(k)}=W(\A_k)_{\bullet}$,
\item $W'=\Spec \ k[T_{i,j}, 1\leq i\leq n,1\leq j\leq m_i; \frac{1}{\sum\limits_{1\leq i\leq n}\prod\limits_{1\leq j\leq m_i}T_{i,j}}]$ satisfies $\overline{W'(k)}=W'(\A_k)_{\bullet}^{\Br}$.
\end{enumerate}
\end{cor}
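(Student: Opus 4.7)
The plan is to prove part~(1) by a fibration argument invoking the good-fibration lemma, and then to deduce part~(2) from an explicit isomorphism $W'\simeq W\times\Gm$.

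For part~(1), set $N=\sum_i m_i$ and consider the projection $\pi\colon\Af_k^N\to\Af_k^{N-n}$ that forgets the coordinates $T_{i,1}$. With $a_i:=\prod_{j\geq 2}T_{i,j}$, the defining equation of $W$ becomes $\sum_i a_iT_{i,1}=1$, which is affine-linear in $(T_{i,1})_i$. Let $U\subset\Af_k^{N-n}$ be the open subscheme on which at least one $a_i$ is nonzero; its complement is the finite union of the linear subspaces $\bigcap_i\{T_{i,j_i}=0\}$ for tuples $(j_i)$ with $2\leq j_i\leq m_i$, so it has codimension $n\geq 2$, and Lemma~\ref{codim 2} gives $\overline{U(k)}=U(\A_k)_\bullet$. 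Over each of the Zariski opens $U_i:=\{a_i\neq 0\}$ covering $U$, solving the defining equation for $T_{i,1}$ exhibits $\pi|_W$ as the trivial bundle $U_i\times\Af^{n-1}$, so globally $\pi|_W\colon W\to U$ is smooth and surjective with geometrically integral fibers $\Af^{n-1}$. The integral-model surjectivity required by Definition~\ref{goodfib} then follows from \cite[Theorem 4.5]{con12}, and each fiber $\Af^{n-1}$ satisfies strong approximation, so $\pi|_W$ is a good fibration and Lemma~\ref{lem5.4}(2) yields $\overline{W(k)}=W(\A_k)_\bullet$.

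For part~(2), I would define a morphism $\phi\colon W'\to W\times\Gm$ by
\[
(t_{i,j})\longmapsto\Bigl((t_{i,1}/c,\,t_{i,j})_{i,\,j\geq 2},\,c\Bigr),\qquad c=\sum_i\prod_j t_{i,j},
\]
with inverse $((s_{i,j}),c)\mapsto(cs_{i,1},s_{i,j\geq 2})$; the scaling of the first column by $1/c$ normalizes the sum to $1$, and both composites reduce to identities after a short substitution. The variety $W$ is smooth (the vanishing of every partial $-\prod_{j'\neq j}T_{i,j'}$ forces each row-product $\prod_j T_{i,j}=T_{i,j}\cdot\prod_{j'\neq j}T_{i,j'}$ to vanish, contradicting $\sum_i\prod_j T_{i,j}=1$) and geometrically integral (the defining polynomial is of degree one in $T_{1,1}$ with coefficient $-\prod_{j\geq 2}T_{1,j}$ and constant term $1-\sum_{i\geq 2}\prod_j T_{i,j}$, which share no nontrivial common factor by Gauss's lemma). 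Since part~(1) implies $\overline{W(k)}=W(\A_k)_\bullet\supseteq W(\A_k)_\bullet^{\Br}$ and hence $\overline{W(k)}=W(\A_k)_\bullet^{\Br}$, combining this with Lemma~\ref{algebraic group} for $\Gm$ and Lemma~\ref{product} yields $\overline{W'(k)}=W'(\A_k)_\bullet^{\Br}$.

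The main obstacle, though ultimately routine, is the bookkeeping in part~(1): precisely identifying the image of $\pi|_W$ as the codimension-$\geq 2$ complement $U$, writing down the Zariski-local trivializations so that smoothness and geometric integrality of fibers become transparent, and confirming the integral-model surjectivity demanded by Definition~\ref{goodfib}. The scaling isomorphism in part~(2) is cleaner but must be presented as a morphism of schemes over $k$ rather than just a bijection on $k$-points.
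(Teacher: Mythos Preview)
Your proposal is correct and follows essentially the same approach as the paper: for (1) the paper also projects away the coordinates $T_{i,1}$ onto the open set $U$ where not all $\prod_{j\geq 2}T_{i,j}$ vanish, invokes Lemma~\ref{codim 2} for $U$, and checks that this is a good fibration with affine-space fibers; for (2) the paper uses the same scaling isomorphism $W'\cong W\times\Gm$ (phrased as multiplying $T_{1,1},\dots,T_{n,1}$ by the sum) together with Lemmas~\ref{product} and~\ref{algebraic group}. Your write-up is actually more detailed in places (explicit trivializations, smoothness and irreducibility of $W$), which the paper leaves implicit but which are genuinely needed to apply Lemma~\ref{product}.
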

\begin{proof}
$(1)$ Consider the morphism  $f:W \to V=\Spec \ k[Z_{i,j}, 1\leq i\leq n,2\leq j\leq m_i]-\square$, which maps $Z_{i,j}$ to $T_{i,j}$, where $\square\subset \Spec \ k[Z_{i,j}, 1\leq i\leq n,2\leq j\leq m_i]$ is the closed subscheme defined by $(\prod\limits_{2\leq j\leq m_1}T_{1,j})=...=(\prod\limits_{2\leq j\leq m_n}T_{n,j})=0$.
Then $\overline{V(k)}=V(\A_k)_{\bullet}$ by Lemma \ref{codim 2}.
One can verify that \( f \) is a good fibration.  
Indeed, the fibre of \( f \) over any \( k \)-point has constant dimension; that is, all fibres are isomorphic to affine spaces of the same dimension.  
By miracle flatness (cf. \cite[Proposition~6.1.5]{ega4.2}), it follows that \( f \) is flat, and hence smooth.  
Moreover, the surjectivity on locally integral points arises from the geometrically integral fibres.

$(2)$ Set $D=\sum\limits_{1\leq i\leq n}\prod\limits_{1\leq j\leq m_i}T_{i,j}\in k[T_{i,j}]$, which is invertible on $W'$. Consider the morphism $W'\to W\times\Gm$ sending $(T_{i,1},\ldots,T_{i,m_i})_{1\leq i\leq n}$ to $\big((T_{i,1}/D,T_{i,2},\ldots,T_{i,m_i})_{1\leq i\leq n},\,D\big)$. This morphism is an isomorphism: its inverse sends $\big((T_{i,1},\ldots,T_{i,m_i})_{1\leq i\leq n},u\big)$ to $(uT_{i,1},T_{i,2},\ldots,T_{i,m_i})_{1\leq i\leq n}$. Hence $W'\cong W\times\Gm$.
Then $(1)$, Lemma \ref{product} and Lemma \ref{algebraic group} imply the result.
\end{proof}

Before proving Theorem \ref{th5.3}, we first introduce the following notation and establish two lemmas.

\begin{defn}
    For each $m\geq1$ and $n\geq 2$, let $\mathfrak{R}_{m,n}:=\{(r_\ell)_{1\leq \ell \leq n-1}\in \mathbb{Z}^{n-1}|1\leq r_\ell \leq m+\ell, r_1<r_2<...<r_{n-1} \}$, let $\mathfrak{S}_{m,n}:=\{(s_\ell)_{1\leq \ell\leq n}\in \mathbb{Z}^{n}|(s_\ell)_{1\leq \ell\leq n-1}\in\mathfrak{R}_{m,n}, s_{n-1}\leq s_n\leq m+n-2\}$.
\end{defn}

\begin{defn}
    For each integer $n\geq1$, and $r,s$ such that $1\leq r\leq s\leq n$, we define a set $\mathfrak{M}_n(s,r)$ as a subset of $k[T_{i,j}\mid_{1\leq i,j\leq n}]$ which contains all minors of order $r$ obtained by choosing any $r$ rows among the first $s$ rows and taking the first $r$ columns of the matrix $\left(T_{i,j}\right)_{n\times n}$.  
\end{defn}

\begin{defn}
    For positive integers $r,s$, such that $r\leq s$, we define the closed subscheme $\mathrm{SL}_{s,r}$ of $\mathrm{SL}_s$ (with coordinates $w_{i,j}$ where $1\leq i,j\leq s$) which is defined by $w_{s,i}=0$ for each $1\leq i\leq r-1$.
\end{defn}

\begin{lem}\label{sl}
    For positive integers $r,s$, such that $r< s$, we have $\overline{\mathrm{SL}_{s,r}(k)}=\mathrm{SL}_{s,r}(\A_k)_{\bullet}$.
\end{lem}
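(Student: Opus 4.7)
The plan is to realize $\mathrm{SL}_{s,r}$ as a good fibration (in the sense of Definition~\ref{goodfib}) over an open subset of affine space whose complement has codimension at least two, and then to combine Lemma~\ref{codim 2} with Lemma~\ref{lem5.4}(2).

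Concretely, I would write a point of $\mathrm{SL}_{s,r}$ as a pair $(A,w)$, where $A=(w_{i,j})_{1\leq i\leq s-1,\,1\leq j\leq s}$ is the matrix formed by its first $s-1$ rows and $w=(w_{s,r},\ldots,w_{s,s})$ is the tail of the last row. Expanding the determinant along the last row, the defining equation of $\mathrm{SL}_{s,r}$ becomes
\[
\sum_{j=r}^{s}(-1)^{s+j}w_{s,j}M_j(A)=1,
\]
where $M_j(A)$ denotes the $(s-1)\times(s-1)$ minor of $A$ obtained by deleting its $j$-th column. Let $\pi\colon \mathrm{SL}_{s,r}\to U$ be the projection $(A,w)\mapsto A$, where $U\subseteq \Af_k^{s(s-1)}$ is the open subset on which at least one of $M_r(A),\ldots,M_s(A)$ is non-zero. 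Every fiber of $\pi$ over a $k$-point of $U$ is an affine hyperplane in $\Af_k^{s-r+1}$, hence isomorphic to $\Af_k^{s-r}$ and therefore satisfies strong approximation; $\pi$ is smooth because the non-vanishing of some $M_j(A)$ on $U$ supplies a non-zero partial derivative of the defining equation; and since the fibers are geometrically integral, the induced map on $\ov$-points is surjective for almost all $v$. Thus $\pi$ is a good fibration.

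The main step will be the codimension estimate for $F:=\Af_k^{s(s-1)}\setminus U$, that is, the closed locus where $M_r(A)=\cdots=M_s(A)=0$. I would decompose $F=F_a\cup F_b$, where $F_a$ is the locus where the first $r-1$ columns of $A$ are linearly dependent (understood as empty when $r=1$), and $F_b$ is the determinantal locus where $A$ has rank strictly less than $s-1$. To justify $F\subseteq F_a\cup F_b$, assume $A\notin F_a$: then the first $r-1$ columns of $A$ are linearly independent, so if $A$ had rank $s-1$ its unique (up to scalar) column dependency would have a non-zero coefficient at some column $j\in\{r,\ldots,s\}$, forcing $M_j(A)\neq 0$ and contradicting $A\in F$. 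The standard codimension formula for determinantal loci then yields $\codim F_a=s-r+1$ and $\codim F_b=2$, both of which are $\geq 2$ whenever $r<s$. Hence $\codim F\geq 2$, Lemma~\ref{codim 2} yields $\overline{U(k)}=U(\A_k)_{\bullet}$, and Lemma~\ref{lem5.4}(2) finally delivers $\overline{\mathrm{SL}_{s,r}(k)}=\mathrm{SL}_{s,r}(\A_k)_{\bullet}$.
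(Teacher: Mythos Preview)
Your proof is correct and follows essentially the same route as the paper: project to the first $s-1$ rows, land in the open set $U$ where some cofactor $M_j$ ($r\le j\le s$) is non-zero, observe the fibers are $\Af_k^{s-r}$, and apply Lemma~\ref{codim 2} together with Lemma~\ref{lem5.4}(2). The paper simply asserts that the complement $F$ has codimension $\ge 2$, whereas you supply the decomposition $F\subseteq F_a\cup F_b$ and the determinantal codimension counts, which is a welcome addition.
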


\begin{proof}
    For any matrix $(w_{i,j})_{1\leq i,j\leq s}$ in $\mathrm{SL}_{s,r}$, we have
    \[
    \det(w_{i,j})=\sum_{i=r}^s w_{s,i}\cdot C_{s,i},
    \]
    where $C_{s,i}$ is the cofactor of $(s,i)$-entry. Hence there always exists an integer $i_0$ with $r\leq i_0\leq s$ such that $C_{s,i_0}\neq 0$.
    Then there exists a natural morphism
    \[
        f:\mathrm{SL}_{s,r}\to \Spec \ k[w_{i,j}\mid 1\leq i\leq s-1,1\leq j\leq s]-\cap_{r\leq \ell\leq s}V(C_{s,\ell}).
    \]
    One checks that $f$ is a good fibration whose fibers are always isomorphic to $\Af^{s-r}$.
    Moreover, $r<s$ implies that $\cap_{r\leq \ell\leq s}V(C_{s,\ell})\subset \Spec \ k[w_{i,j}\mid 1\leq i\leq s-1,1\leq j\leq s]$ is a closed subscheme of codimension at least $2$.
    Therefore, we have $\overline{\mathrm{SL}_{s,r}(k)}=\mathrm{SL}_{s,r}(\A_k)_{\bullet}$ by Lemma \ref{codim 2} and Lemma \ref{lem5.4}.
\end{proof}

\begin{lem}\label{SA W}
    For each integer $m\geq1$ and $n\geq2$, for each $\mathbf{s}\in \mathfrak{S}_{m,n}$, let $W_{\mathbf{s}}$ be the closed subscheme of $\Spec \ k[T_{i,j}\mid_{1\leq i,j\leq m+n}]/(\det(T_{i,j})_{n\times n}-1)$ defined by
    $$\begin{cases}
T_{m+1+i,j}=0 &\text{ for any $1\leq i\leq n-2$ and $1\leq j\leq s_i$};\\
T_{m+n,j}=0 &\text{ for any $1\leq j\leq s_n$};\\    
f=0 &\text{ for any $1\leq i\leq n-1$ and any $f\in \mathfrak{M}_{m+n}(m+i-1,s_i)$}.
\end{cases}$$ 
Then $W_{\mathbf{s}}$ is a smooth and $k$-rational variety satisfying $\overline{W_{\mathbf{s}}(k)}=W_{\mathbf{s}}(\A_k)_{\bullet}$.

Similarly, let $W'_{\mathbf{s}}$ be the closed subscheme of $\Spec \ k[T_{i,j}\mid_{1\leq i,j\leq m+n};\frac{1}{\det(T_{i,j})}]$ defined by the same equations as above, then $W'_{\mathbf{s}}$ is a smooth and $k$-rational variety satisfying $\overline{W'_{\mathbf{s}}(k)}=W'_{\mathbf{s}}(\A_k)_{\bullet}^{\Br}$.
\end{lem}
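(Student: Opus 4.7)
The plan is to exhibit $W_\mathbf{s}$ (and similarly $W'_\mathbf{s}$) as an iterated good fibration over a base whose density properties are already known. Specifically, I would combine Lemma~\ref{sl}, Lemma~\ref{algebraic group}, Corollary~\ref{coro5.5}, Lemma~\ref{codim 2}, and Lemma~\ref{product} via the good fibration technique (Lemma~\ref{lem5.4}) to deduce the required density statements, while smoothness and $k$-rationality will follow from the explicit structure produced along the way.

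First I would carry out a scheme-theoretic analysis of how the minor conditions $\mathfrak{M}_{m+n}(m+i-1,s_i)=0$ interact with the explicit zero pattern and the determinant condition. By induction on $i$ from $1$ to $n-1$, each minor collection either (a) reduces to a system of linear equations forcing further entries in the upper-left block to vanish, or (b) imposes a nontrivial rank-type condition whose intersection with the determinant locus is smooth. Collecting everything presents $W_\mathbf{s}$ as a closed subvariety of $\mathrm{SL}_{m+n}$ (or a related $\mathrm{SL}$-type variety) cut out by an extended ``staircase'' zero pattern together with possibly auxiliary rank-type conditions. In those $\mathbf{s}\in\mathfrak{S}_{m,n}$ for which the staircase collapses so far that the first $s_{n-1}$ columns can never attain full rank, $W_\mathbf{s}$ is empty and the statement is vacuous.

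Once this normal form is in hand, I would construct a projection $W_\mathbf{s}\to B$ onto a base $B$ obtained by eliminating suitable rows or columns; the fibers are affine spaces, so the projection qualifies as a good fibration in the sense of Definition~\ref{goodfib}. An inductive reduction of this kind successively transfers the density problem for $W_\mathbf{s}$ to simpler bases, which in the base case are either of the form $\mathrm{SL}_{s,r}$ (Lemma~\ref{sl}) or $\Af_k^n\setminus F$ with $F$ of codimension at least $2$ (Lemma~\ref{codim 2}), or products thereof handled by Lemma~\ref{product}. The $W'_\mathbf{s}$ case runs in parallel: the equation $\det T=1$ is replaced by invertibility of $\det T$, contributing an extra $\Gm$ factor; the final base becomes a product of a sum-of-products variety (treated by Corollary~\ref{coro5.5}(2)) and connected linear algebraic groups (treated by Lemma~\ref{algebraic group}).

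The main obstacle is the scheme-theoretic reduction in the first step: one has to verify that the ideal generated by the zero entries, the minors and $\det T-1$ agrees with the cleaner ideal coming from the ``staircase'' zero pattern and the smooth rank-type conditions, rather than merely defining the same underlying reduced subscheme, and one has to arrange the successive projections so that each intermediate fiber is genuinely an affine space, allowing Lemma~\ref{lem5.4} to propagate the density. The verifications exploit the ordering $s_1<s_2<\cdots<s_{n-1}\leq s_n$ together with the zero pattern on the rows indexed $m+2,\ldots,m+n$, and amount at each stage to checking that the cofactor appearing as coefficient of a newly forced entry is invertible modulo the already-constructed ideal.
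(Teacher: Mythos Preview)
Your overall strategy --- reduce $W'_\mathbf{s}$ to $W_\mathbf{s}\times\Gm$, then argue by good fibrations citing the lemmas you list --- is the paper's. The gap is in the fibration itself. The minor conditions $\mathfrak{M}_{m+n}(m+i-1,s_i)=0$ are genuine rank-drop conditions on the first $s_i$ columns of a block of rows; they do not ``reduce to linear equations forcing further entries to vanish'', and a projection that merely forgets some rows or columns will not have affine-space fibers. In fact the paper's fibers are \emph{not} affine spaces: up to an affine-space factor they are copies of $W'_{\mathbf{s}'}$ for a specific $\mathbf{s}'\in\mathfrak{S}_{m,n-1}$, and it is the inductive hypothesis applied to those fibers that makes the argument go through. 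Your proposed base cases ($\mathrm{SL}_{s,r}$, complements of codimension-$2$ loci) also appear in the wrong place: in the paper the variety $\mathrm{SL}_{m+1,s_2-1}$ shows up as a fiber in the $n=2$ step, not as a base.

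The key construction you are missing is this. Fix $m$, induct on $n$, and expand $\det T$ along row $m+n-1$. The zero pattern on that row kills the terms with column index $l\le s_{n-2}$; for $l>s_{n-1}$ the cofactor $C_{m+n-1,l}$ still contains all of columns $1,\dots,s_{n-1}$, and the condition $\mathfrak{M}_{m+n}(m+n-2,s_{n-1})=0$ together with the zeros in row $m+n$ forces those columns to be dependent, so that cofactor vanishes as well. One obtains $\det T=\sum_{l=s_{n-2}+1}^{s_{n-1}}T_{m+n-1,l}\,C_{m+n-1,l}$ in the coordinate ring of $W_\mathbf{s}$, and sending $(T_{m+n-1,l},C_{m+n-1,l})$ to $(X_l,Y_l)$ yields a morphism to a sum-of-products variety of the shape handled by Corollary~\ref{coro5.5}. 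Over a $k$-point where some $Y_\beta\ne 0$, the top minor condition expresses column $\beta$ as a linear combination of the remaining ones, and this identifies the fiber with $\mathrm{SL}_{m+1,s_2-1}$ times an affine space when $n=2$, and with $W'_{(s_1,\dots,s_{n-2},s_n-1)}$ times an affine space when $n\ge 3$. Smoothness and $k$-rationality are read off from the same open cover $W_\mathbf{s}=\bigcup_\alpha D(C_{m+n-1,\alpha})$, each piece of which embeds openly in an affine space (or in a smaller $W'$ times an affine space); your scheme-versus-reduced concern therefore does not arise separately.
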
   

\begin{proof}
We only prove the case of $W_{\mathbf{s}}$. By a similar argument as in the proof of Corollary~\ref{coro5.5}(2), with $\det(T_{i,j})$ playing the role of $D$, $W'_{\mathbf{s}}$ is isomorphic to $W_{\mathbf{s}}\times\Gm$. The result then follows from the case of $W_{\mathbf{s}}$, together with Lemma \ref{product} and Lemma \ref{algebraic group}.

To prove the lemma, we fix $m$ and proceed by induction on $n$.

Given $m\geq1$, $n\geq2$ and $\textbf{s}=(s_\ell)_{1\leq \ell \leq n}\in \mathfrak{S}_{m,n}$ (set $s_0=0$ when $n=2$), note that in the coordinate ring of $W_{\textbf{s}}$, there is a decomposition (expanding the determinant along the ($m+n-1$)-th row) 
$$\det (T_{i,j})=\sum_{\ell=s_{n-2}+1}^{s_{n-1}} T_{m+n-1,\ell}\cdot C_{m+n-1,\ell},$$
where the cofactor $C_{m+n-1,\alpha}$ for $s_{n-2}+1\leq \alpha\leq s_{n-1}$ is of the form $$(\pm{1})\cdot\left|\begin{matrix}
T_{1,1} & ... & T_{1,\alpha-1} & T_{1,\alpha+1} &...& T_{1,s_n} & T_{1,s_n+1} &...&  T_{1,m+n}\\
\vdots &  & \vdots & \vdots & & \vdots & \vdots & &  \vdots\\
T_{m+n-2,1} & ... & T_{m+n-2,\alpha-1} & T_{m+n-2,\alpha+1} &...& T_{m+n-2,s_n} & T_{m+n-2,s_n+1} &...&  T_{m+n-2,m+n}\\
0 & ... & 0 & 0 &...& 0 & T_{m+n,s_n+1} &...&  T_{m+n,m+n}\\
\end{matrix}\right|.$$

Then $W_{\textbf{s}}$ has an open cover $W_{\textbf{s}}=\bigcup_\alpha D(C_{m+n-1,\alpha})$. On $D(C_{m+n-1,\alpha})$, column vectors $$\begin{pmatrix}
    T_{1,1}\\
    \vdots\\
    T_{m+n-2,1}
\end{pmatrix},..., \begin{pmatrix}
    T_{1,\alpha-1}\\
    \vdots\\
    T_{m+n-2,\alpha-1}
\end{pmatrix}, \begin{pmatrix}
    T_{1,\alpha+1}\\
    \vdots\\
    T_{m+n-2,\alpha+1}
\end{pmatrix},..., \begin{pmatrix}
    T_{1,s_{n-1}}\\
    \vdots\\
    T_{m+n-2,s_{n-1}}
\end{pmatrix}$$ are linearly independent.
Hence ``$f=0$ for any $f\in \mathfrak{M}_{m+n}(m+n-2,s_{n-1})$'' implies that $\begin{pmatrix}
    T_{1,\alpha}&\cdots&    T_{m+n-2,\alpha}
\end{pmatrix}^\mathrm{T}$ is a linear combination of the above vectors.

Therefore, when $n=2$, one checks that $D(C_{m+1,\alpha})$ can be embedded as a nonempty open subscheme of an affine space.
As $\bigcap_\alpha D(C_{m+1,\alpha})\neq\varnothing$, $W_{\textbf{s}}$ is a smooth and $k$-rational variety.

Similarly, when $n\geq3$, $D(C_{m+n-1,\alpha})$ can be embedded as an open subscheme of $ W'_{\textbf{s}'}\times\Af^{m+n-2+s_{n-1}-s_{n-2}}_k$, where $\textbf{s}'=(s_1,...,s_{n-2},s_n-1)\in\mathfrak{S}_{m,n-1}$ is a well-defined element.
As $\bigcap_\alpha D(C_{m+n-1,\alpha})\neq\varnothing$, the properties of smoothness and rationality of $W_{\textbf{s}}$ reduce to those of $W'_{\textbf{s}'}$.

In addition, we consider the morphism $$f:W_{\textbf{s}}\to \Spec \ k[X_{s_{n-2}+1},...,X_{s_{n-1}};Y_{s_{n-2}+1},...,Y_{s_{n-1}}]/(1-\sum_{i=s_{n-2}+1}^{s_{n-1}} X_i Y_i),$$
$$X_\alpha\mapsto T_{m+n-1,\alpha},$$
$$Y_\alpha\mapsto C_{m+n-1,\alpha}.$$
Note that for each $k$-point (similarly for other points) of the target variety, there exists an integer $s_{n-2}+1\leq\beta\leq s_{n-1}$ such that $Y_\beta\neq0$. On the coordinate ring of the fiber of $f$ at this point, column vectors $$\begin{pmatrix}
    T_{1,1}\\
    \vdots\\
    T_{m+n-2,1}
\end{pmatrix},..., \begin{pmatrix}
    T_{1,\beta-1}\\
    \vdots\\
    T_{m+n-2,\beta-1}
\end{pmatrix}, \begin{pmatrix}
    T_{1,\beta+1}\\
    \vdots\\
    T_{m+n-2,\beta+1}
\end{pmatrix},..., \begin{pmatrix}
    T_{1,s_{n-1}}\\
    \vdots\\
    T_{m+n-2,s_{n-1}}
\end{pmatrix}$$ are linearly independent.
Hence ``$f=0$ for any $f\in \mathfrak{M}_{m+n}(m+n-2,s_{n-1})$'' implies that $\begin{pmatrix}
    T_{1,\beta} &\cdots&T_{m+n-2,\beta}
\end{pmatrix}^\mathrm{T}$ is a linear combination of the above vectors. 
Therefore, when $n=2$, each fiber of $f$ is isomorphic to $\mathrm{SL}_{m+1,s_2-1}\times \Af^{m+2-s_1}$, hence $f$ is a good fibration, and $\overline{W_{\textbf{s}}(k)}=W_{\textbf{s}}(\A_k)_{\bullet}$ follows from Corollary \ref{coro5.5}.

Similarly, when $n\geq3$, each fiber of $f$ is isomorphic to $W_{\textbf{s}'}\times\Af^{m+n-s_{n-1}}$, where $\textbf{s}'=(s_1,...,s_{n-2},s_n-1)\in\mathfrak{S}_{m,n-1}$ is a well-defined element.
It then follows from the induction hypothesis that $f$ is a good fibration and $\overline{W_{\textbf{s}}(k)}=W_{\textbf{s}}(\A_k)_{\bullet}$.
This completes the proof.
\end{proof}

In fact, the proof of Lemma~\ref{SA W} closely resembles that of Theorem~\ref{th5.3}.

\

\begin{rem}\label{another def}
Recall that from linear algebra, we know that for a Jordan canonical form $M$ with a single eigenvalue $\lambda$, an invertible matrix $T$ satisfies  $T^{-1}MT$ is upper triangular iff $$\text{$(M-\lambda I)\cdot\text{``the  ($i+1$)-th column of $T$''}\in\text{Span\{``the first $i$ columns of $T$''\}}$ for each $i$},$$
i.e. $\{\text{``the first $i$ columns of $T$'', $(M-\lambda I)\cdot\text{``the  ($i+1$)-th column of $T$''}$}\}$ is linearly dependent. Or equivalently, the vanishing of certain minors related to $T$. 
This induces an alternative definition of $X_M$ as a closed subscheme of $\Spec \ k[T_{i,j},w]/(w\cdot\det(T)-1)$. We will use this definition in the subsequent discussion.

For example if $M=\begin{pmatrix}
            \lambda & 0 &0\\
            0 & \lambda & 1\\
            0 & 0 & \lambda
        \end{pmatrix}$, let $V$ be the closed subscheme of $\Spec \ k[T_{i,j},w]/(w\cdot\det(T)-1)$ defined by $T_{3,1}=0, T_{1,1}\cdot T_{3,2}=0$, then $V$ and $X_M$ have the same set of $\overline{k}$-points, hence they have isomorphic irreducible components. 
\end{rem}

By Remark \ref{another def}, we may define the variety \( X_M \) using an alternative set of equations. In particular, this new presentation admits further reduction, which simplifies subsequent computations.

\begin{lem}\label{lem:newdef}
Let \( X_M' \) be the closed subscheme of \(\Spec\ k[T_{i,j} \mid_{1 \leq i,j \leq m+n} ; w]/(w \cdot \det(T_{i,j}) - 1)\) defined by the equations:
\[
\begin{cases}
T_{i,1} = 0 & \text{for } m+2 \leq i \leq m+n;\\

f \cdot T_{i,j} = 0 & \text{for all } m+2\leq i\leq m+n,\ 2\leq j\leq i-1, \ f \in \mathfrak{M}_{m+n}(i-2, j-1).\\
\end{cases}
\]

Then \( X_M' \) and \( X_M \) have the same \(\overline{k}\)-points and thus the same reduced subscheme structure. In particular, they have isomorphic irreducible components.
\end{lem}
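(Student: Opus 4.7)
The plan is to verify the equality $X_M(\overline{k}) = X_M'(\overline{k})$ as subsets of $\mathrm{GL}_{m+n}(\overline{k})$: since both schemes are closed subschemes of the same ambient chart, agreement on $\overline{k}$-points forces agreement as reduced subschemes, and this will then give the isomorphism of irreducible components. I first unwind the definitions using Remark \ref{another def}: a $\overline{k}$-point $T$ of $X_M$ is an invertible matrix satisfying $v^{(j)} := (M - \lambda I) T_{\bullet, j} \in V_{j-1} := \mathrm{span}(T_{\bullet, 1}, \ldots, T_{\bullet, j-1})$ for every $j$. Since $M - \lambda I$ vanishes on the top $m$ standard basis vectors and acts by $e_{r} \mapsto e_{r-1}$ on the Jordan block, $v^{(j)}$ has zero entries in rows $1, \ldots, m$ and $m+n$, and entry $T_{i, j}$ in row $i - 1$ for $i = m+2, \ldots, m+n$. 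In particular, the $j = 1$ case reduces to $T_{i, 1} = 0$ for $i \geq m + 2$, exactly matching the first group of equations of $X_M'$.

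For the inclusion $X_M(\overline{k}) \subseteq X_M'(\overline{k})$, I proceed by induction on the column index $j$. Fix $i \in \{m+2, \ldots, m+n\}$ and a minor $f \in \mathfrak{M}_{m+n}(i-2, j-1)$ with row set $\{a_1 < \cdots < a_{j-1}\} \subset \{1, \ldots, i-2\}$. The $j \times j$ minor of the augmented matrix $[T_{\bullet, 1} \mid \cdots \mid T_{\bullet, j-1} \mid v^{(j)}]$ with rows $\{a_1, \ldots, a_{j-1}, i - 1\}$ vanishes because it is one of the defining minors for $X_M$ provided by Remark \ref{another def}. Expanding along the last column yields an identity in the coordinate ring of the form
\[
\pm T_{i, j} \cdot f \;=\; \sum_{l :\, a_l \in \{m+1, \ldots, m+n-1\}} \pm T_{a_l + 1, j} \cdot g_l,
\]
where each $g_l$ is a $(j-1) \times (j-1)$ minor of the first $j-1$ columns on the row set $\{a_1, \ldots, a_{j-1}, i - 1\} \setminus \{a_l\}$. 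Since $a_l + 1 < i$, an iterated use of this expansion, combined with the $X_M'$ equations at columns $< j$ (already verified by the outer induction) and with the first-column vanishings $T_{r, 1} = 0$ for $r \geq m + 2$, reduces the right-hand side to zero, giving $T_{i, j} \cdot f = 0$ on $X_M(\overline{k})$.

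For the converse inclusion $X_M'(\overline{k}) \subseteq X_M(\overline{k})$, I again induct on $j$, assuming the $X_M'$ equations and that $V_1 \subset \cdots \subset V_{j-1}$ is an $(M - \lambda I)$-stable flag. Let $I = \{i : m+2 \leq i \leq m+n, \; T_{i, j} \neq 0\}$. For each $i \in I$, the $X_M'$ equation $T_{i,j} \cdot f = 0$ forces, over $\overline{k}$, that the upper-left $(i-2) \times (j-1)$ block of $T$ has rank $< j - 1$; equivalently, $V_{j-1}$ contains a nonzero vector $w^{(i)}$ supported on rows $\{i - 1, \ldots, m + n\}$. Since $V_{j-1}$ is $(M - \lambda I)$-stable, applying the nilpotent $(M - \lambda I)$ iteratively to these $w^{(i)}$ produces further elements of $V_{j-1}$, and a careful peeling procedure starting from the largest index $i \in I$ and working downward will construct an element of $V_{j-1}$ that matches $v^{(j)}$ on every row, certifying $v^{(j)} \in V_{j-1}$.

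The main technical obstacle lies in both inductive steps when $j > m + 1$. In the first direction, the expansion of the $X_M$ minor produces several error terms rather than a single one, so the reduction must be iterated and carefully combined with previously-derived $X_M'$ relations at smaller column indices. In the second direction, the peeling construction must track how successive applications of $(M - \lambda I)$ shift the support of the vectors $w^{(i)}$, ensuring that the final combination matches $v^{(j)}$ exactly. Even the small case $m = 1$, $n = 3$ already exhibits the interaction: the relation $(T_{1,1} T_{2,2} - T_{1,2} T_{2,1}) \cdot T_{4,3} = 0$ is obtained by combining a single $3 \times 3$ $X_M$ minor identity with the previously-established equation $T_{1,1} T_{3,2} = 0$.
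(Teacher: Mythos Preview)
Your plan is viable and identifies the right objects, but the route differs from the paper's and the two ``technical obstacles'' you flag are real and not resolved by the outline as written.

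The paper proves the stronger statement that the defining \emph{ideals} of $X_M$ and $X_M'$ coincide (not just their $\overline{k}$-points), which handles both inclusions at once. Writing $\mathfrak{I}_\ell$ for the ideal generated by the full-column minors of $M_1,\ldots,M_\ell$ (your augmented matrices $[T_{\bullet,1}\mid\cdots\mid T_{\bullet,\ell-1}\mid v^{(\ell)}]$), the paper shows $\mathfrak{I}_\ell=\mathfrak{I}_{\ell-1}+(\mathcal{M}_\ell)$ where $\mathcal{M}_\ell$ is exactly the set $\{f\cdot T_{i,\ell}\}$ from the $X_M'$ presentation. The key device is to expand each minor along its \emph{last row} rather than its last column: this isolates the desired term $T_{u_\ell+1,\ell}\cdot\mathcal{C}_{\ell,\ell}$ and leaves error terms $T_{u_\ell,i}\cdot\mathcal{C}_{\ell,i}$ at strictly \emph{smaller column index} $i<\ell$, so the outer induction on $\ell$ absorbs them. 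When consecutive rows appear the paper iterates this bottom-row expansion, tracking that the selected column indices must strictly decrease, until every surviving term is visibly in $\mathfrak{I}_{\ell-1}$.

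Your last-column expansion instead produces error terms $T_{a_l+1,j}\cdot g_l$ still at column $j$. The phrase ``iterated use of this expansion'' suggests an inner induction on $i$, but that does not apply directly: $g_l$ contains the row $i-1$, so $g_l\notin\mathfrak{M}_{m+n}(a_l-1,j-1)$ and the error term is not of the shape covered by the smaller-$i$ case. What makes your worked example go through is that $g_l$ itself vanishes modulo the $X_M'$ relations at columns $<j$; proving this in general requires a further expansion of $g_l$ along \emph{its} bottom row and a descent essentially identical to the paper's. Likewise, in your reverse inclusion the vector $w^{(i)}$ is only known to be supported on rows $\geq i-1$ and may be nonzero below row $i-1$, so matching $v^{(j)}$ row by row does not terminate without a more careful use of the $N$-stability (and of invertibility of $T$) than you have sketched.

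In short: your two-inclusion, $\overline{k}$-point strategy is legitimate, and the geometric use of $N$-stability for $X_M'\subseteq X_M$ is a genuinely different idea from anything in the paper; but the combinatorial core of the ``iteration'' you still owe is precisely the bottom-row peeling that the paper carries out in its single ideal-level argument.
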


The proof of Lemma \ref{lem:newdef} is somewhat technical and is deferred to Appendix~\ref{newdef proof} for the sake of readability.

\

As \( X_M \) and \( X_M' \) have the same irreducible components, we shall henceforth denote \( X_M \) by the scheme \( X_M' \) for the rest of this section.

\begin{lem}\label{V_r}
    For each $\textbf{r}=(r_\ell)_{1\leq \ell \leq n-1}\in \mathfrak{R}_{m,n}$, define $V_{\textbf{r}}$ to be the closed subscheme of $\Spec \ k[T_{i,j}\mid_{1\leq i,j\leq m+n};w]/(w\cdot\det(T_{i,j})-1)$ defined by 
$$\begin{cases}
T_{m+1+i,j}=0 &\text{ for any $1\leq i\leq n-1$ and $1\leq j\leq r_i$};\\
f=0 &\text{ for any $1\leq i\leq n-1$ and any $f\in \mathfrak{M}_{m+n}(m+i-1,r_i)$}.
\end{cases}$$ 
Then each $V_{\textbf{r}}$ is a closed subscheme of $X_M$.
Moreover, we have
\[
X_M=\bigcup_{\textbf{r}\in \mathfrak{R}_{m,n}} V_{\textbf{r}}.
\] 
\end{lem}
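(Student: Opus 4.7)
The plan is to establish the two assertions in turn: that each $V_{\textbf{r}}$ is a closed subscheme of $X_M$ (an ideal-containment check), and that the union of the $V_{\textbf{r}}$'s exhausts the $\overline{k}$-points of $X_M$ (via a rank profile attached to each matrix).

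For the first assertion, I would use the presentation of $X_M=X_M'$ from Lemma~\ref{lem:newdef} and verify that each of its defining relations belongs to the ideal of $V_{\textbf{r}}$. The equations $T_{i,1}=0$ for $m+2\leq i\leq m+n$ are immediate since $r_i\geq 1$. For the relations $f\cdot T_{m+1+s,j}=0$ with $f\in\mathfrak{M}_{m+n}(m+s-1,j-1)$, I would split into two cases. If $j\leq r_s$ then $T_{m+1+s,j}$ already vanishes on $V_{\textbf{r}}$. If $j>r_s$, the generalized Laplace expansion of the $(j-1)$-minor $f$ along its first $r_s$ columns expresses $f$ as a sum of products, each consisting of an $r_s\times r_s$ minor drawn from $\mathfrak{M}_{m+n}(m+s-1,r_s)$ times a smaller minor, so $f$ itself lies in the ideal of $V_{\textbf{r}}$.

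For the reverse inclusion, I would take $(T,w)\in X_M(\overline{k})$ and produce $\textbf{r}$ from the rank profile of $T$. For each $1\leq s\leq n-1$, let $\rho^{(s)}_\ell$ denote the rank of the first $\ell$ columns of the first $m+s-1$ rows of $T$, and set $r_s$ to be the smallest $\ell\geq 1$ with $\rho^{(s)}_\ell<\ell$. Since $\rho^{(s)}_{m+s}\leq m+s-1$, we have $r_s\in\{1,\ldots,m+s\}$, and by minimality $\rho^{(s)}_{r_s-1}=r_s-1=\rho^{(s)}_{r_s}$. The first equality produces a nonvanishing minor in $\mathfrak{M}_{m+n}(m+s-1,j-1)$ for every $2\leq j\leq r_s$, which combined with the defining relations $f\cdot T_{m+1+s,j}=0$ of $X_M$ forces $T_{m+1+s,j}=0$ for $1\leq j\leq r_s$ (the case $j=1$ coming from $T_{m+2,1}=\cdots=T_{m+n,1}=0$). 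The second equality records the vanishing of every member of $\mathfrak{M}_{m+n}(m+s-1,r_s)$. Thus $(T,w)\in V_{\textbf{r}}$, once I know $\textbf{r}=(r_1,\ldots,r_{n-1})\in\mathfrak{R}_{m,n}$.

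The hard part will be the strict monotonicity $r_1<r_2<\cdots<r_{n-1}$. Non-strict monotonicity $r_s\leq r_{s+1}$ is automatic, since adding a row cannot decrease rank: $\rho^{(s)}_\ell=\ell$ implies $\rho^{(s+1)}_\ell=\ell$. For strictness I will argue by contradiction. Suppose $r_s=r_{s+1}=r$ for some $s\leq n-2$. By the definition of $r_{s+1}$, the first $r$ columns of the first $m+s$ rows of $T$ have rank $r-1$. By non-strict monotonicity, $r_{s'}\geq r$ for every $s'\in\{s,s+1,\ldots,n-1\}$, so the previous paragraph yields $T_{m+1+s',j}=0$ for all $j\leq r$; that is, rows $m+s+1,\ldots,m+n$ of $T$ vanish in their first $r$ entries. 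Hence the first $r$ columns of $T$ have rank at most $r-1$, contradicting the invertibility of $T$ ensured by $w\cdot\det(T)=1$. This forces $r_s<r_{s+1}$ and completes the proof.
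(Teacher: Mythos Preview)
Your proposal is correct and follows essentially the same approach as the paper's proof: both define $r_s$ as the first column index at which the top $m+s-1$ rows drop below full column rank, verify membership in $V_{\mathbf r}$ from the defining relations of $X_M'$, and derive strict monotonicity by contradiction via a rank-deficiency argument contradicting invertibility. Your treatment of the ideal containment (the Laplace expansion showing each $f\in\mathfrak{M}_{m+n}(m+s-1,j-1)$ with $j>r_s$ lies in the ideal generated by $\mathfrak{M}_{m+n}(m+s-1,r_s)$) is in fact more explicit than the paper's, which simply asserts that one checks this by comparing defining equations.
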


\begin{proof}
    One can easily check that each $V_{\textbf{r}}$ is a closed subscheme of $X_M$ by comparing the defining equations of $V_{\textbf{r}}$ and $X_M$.

    Then let \( A = (a_{i,j}) \in X_M(\overline{k}) \).  
For any \( 1 \leq s, t \leq m+n \), let \( A^{s,t} \in \mathrm{M}_{s \times t}(\overline{k}) \) denote the submatrix of \( A \) consisting of its first \( s \) rows and first \( t \) columns.

For each \( 1 \leq i \leq n-1 \), define
\[
    r_i := \min\left\{ t \mid \mathrm{rank}(A^{m+i-1, t}) < t \right\}.
\]

We always have \( r_i \leq m+i \), since \( \mathrm{rank}(A^{m+i-1, m+i}) < m+i \).  
Moreover, one can easily verify that \( r_1 \leq r_2 \leq \cdots \leq r_{n-1} \), because \( \mathrm{rank}(A^{m+i-1, t}) \leq \mathrm{rank}(A^{m+i, t}) \).

However, we can in fact prove that the inequality is strict: \( r_1 < r_2 < \cdots < r_{n-1} \).  
Suppose, for the sake of contradiction, that there exists an integer \( 1 \leq i_0 \leq n-2 \) such that \( r_{i_0} = r_{i_0+1} \).

Then, we have \( a_{i,j} = 0 \) for all \( 1 \leq j \leq r_{i_0} \), \( m+i_0+1 \leq i \leq m+n \):
\begin{itemize}
    \item If $r_{i_0}=1$, then it follows directly from the defining equations of $X_M$.
    \item If $r_{i_0}\geq 2$, note that \( a_{i,1} = 0 \) for all \( m+i_0+1 \leq i \leq m+n \), and that\(f(A) \cdot a_{i,j} = 0\)for any \( 2 \leq j \leq r_{i_0} \), \( m+i_0+1 \leq i \leq m+n \), and any \( f \in \mathfrak{M}_{m+n}(i-2, j-1) \). By the definition of \( r_{i_0} \), for any \( 2 \leq j \leq r_{i_0} \), \( m+i_0+1 \leq i \leq m+n \), and any \( f \in \mathfrak{M}_{m+n}(i-2, j-1) \), there always exists $f\in \mathfrak{M}_{m+n}(i-2, j-1)$ such that $f(A)\neq 0$. Hence \( a_{i,j} = 0 \) for all \( 1 \leq j \leq r_{i_0} \), \( m+i_0+1 \leq i \leq m+n \).
\end{itemize}

Therefore, we have
\[
    \mathrm{rank}(A^{m+n, r_{i_0}}) = \mathrm{rank}(A^{m+i_0, r_{i_0}}) = \mathrm{rank}(A^{m+i_0, r_{i_0+1}}) < r_{i_0+1} =r_{i_0}.
\]
Thus, \( \mathrm{rank}(A) < m+n  \), which contradicts the assumption that \( A \) is invertible.

Hence, we conclude that \( r_1 < r_2 < \cdots < r_{n-1} \), i.e., \( \mathbf{r} = (r_\ell)_{1 \leq \ell \leq n-1} \in \mathfrak{R}_{m,n} \).  
It then follows easily that \( A \in V_{\mathbf{r}}(\overline{k}) \).  
This completes the proof.
\end{proof}

\ 

\begin{proof}[Proof of Theorem \ref{th5.3}]
According to  Lemma \ref{lem:newdef} and Lemma \ref{V_r}, we only need to prove that for any $\mathbf{r}\in \mathfrak{R}_{m,n}$, $V_{\textbf{r}}$ is a smooth and $k$-rational variety satisfying $\overline{V_{\textbf{r}}(k)}=V_{\textbf{r}}(\A_k)_{\bullet}^{\Br}$.

Our proof proceeds by induction on $n$. 
Assume that $\mathbf{r}=(r_\ell)_{1\leq \ell \leq n}\in \mathfrak{R}_{m,n}$ (set $r_0=0$ when $n=2$).
In fact, when $r_{n-1}< m+n-1$, one can easily check that $V_{\textbf{r}}$ is just $W'_{\textbf{s}}$ in Lemma \ref{SA W} where $\textbf{s}=(r_1,...,r_{n-1},r_{n-1})\in \mathfrak{S}_{m,n}$ is a well-defined element, so $V_{\textbf{r}}$ is a smooth and $k$-rational variety satisfying $\overline{V_{\textbf{r}}(k)}=V_{\textbf{r}}(\A_k)_{\bullet}^{\Br}$.
Hence we only need to consider the case where $r_{n-1}= m+n-1$.

In this case, by expanding the determinant along the ($m+n$)-th row, we have
\[
    \det (T_{i,j})=T_{m+n,m+n}\cdot C_{m+n,m+n},
\]
where the cofactor $C_{m+n,m+n}$ is of the form $$\left|\begin{matrix}
T_{1,1} & ... & T_{1,m+n-1}\\
\vdots &  & \vdots \\
T_{m+n-1,1} & ... & T_{m+n-1,m+n-1}\\
\end{matrix}\right|.$$

So one can easily check that when $n=2$, we have $V_{\textbf{r}}\cong \mathrm{GL}_{m+1}\times \Gm\times \Af^{m+1}_k$, hence is a smooth and $k$-rational variety satisfying $\overline{V_{\textbf{r}}(k)}=V_{\textbf{r}}(\A_k)_{\bullet}^{\Br}$ by Lemma \ref{algebraic group}; when $n\geq 3$, we have $V_{\textbf{r}}\cong V_{\textbf{r}'}\times\Gm\times \Af^{m+n-1}_k$, where $\textbf{r}'=(r_1,r_2,...,r_{n-2})\in \mathfrak{R}_{m,n-1}$ is a well-defined element, hence is a smooth and $k$-rational variety satisfying $\overline{V_{\textbf{r}}(k)}=V_{\textbf{r}}(\A_k)_{\bullet}^{\Br}$ by the induction hypothesis and Lemma \ref{product}.
This completes the proof.
\end{proof}

\begin{rem}
Notably, the authors have also established that Conjecture~\ref{co1} and Conjecture~\ref{co2} hold for all $M\in\mathrm{M}_n(\mathcal{O}_k)$ with $n\leq 5$.
All the results so far are obtained using the fibration method. However, different fibrations are employed for different cases, making it difficult to unify them.
Consequently, the validity of these conjectures in full generality remains open.

Moreover, the causal relationship between Conjecture~\ref{co1} and Conjecture~\ref{co2} is still unclear, and it is also unknown whether, when $\mathcal{O}_K$ is a PID, Theorem~\ref{hasse} can be deduced directly from Theorem~\ref{bm} and Conjecture~\ref{co2}.
\end{rem}

\

{\bf Acknowledgments}	
{\it We sincerely thank our thesis advisor Professor Yongqi Liang for raising such an engaging question. We also thank Professor Yang Cao for his valuable discussions. We are grateful to Professor Fei Xu for kindly providing us with a relevant reference.}

\

\appendix

\section{An explicit Brauer--Manin obstruction for triangularizability}\label{explicit appendix}

Let $k=\Q(\sqrt{-5})$ be the number field with ring of integers $\ok=\Z[\sqrt{-5}]$. 
The ideal class group $\mathrm{Cl}(\ok)$ is cyclic of order $2$.
Let $\p=(2,1+\sqrt{-5})$ and $\mathfrak{q}=(3,1+\sqrt{-5})$.
Then $\p$ and $\mathfrak{q}$ represent the unique nontrivial ideal class with $(2)=\p^2$ and $(1+\sqrt{-5})=\p\mathfrak{q}$. 

Take
\[
M=\begin{pmatrix}
0 & 4\\
2\sqrt{-5}-4 & 0
\end{pmatrix}\in \mathrm{M}_2(\ok),
\]
with eigenvalues
\[
\lambda_+=2(1+\sqrt{-5}),\qquad \lambda_-=-2(1+\sqrt{-5}),
\]
and corresponding eigenvectors
\[
v_+=\begin{pmatrix}2\\1+\sqrt{-5}\end{pmatrix},\qquad v_-=\begin{pmatrix}-2\\1+\sqrt{-5}\end{pmatrix}.
\]
Thus $M$ is triangularizable over $k$, hence by Proposition \ref{tp}  it is triangularizable over $\op$ for every finite place $\p$ of $k$, i.e.\ $\prod_v\mathcal{X}_M(\ov)\neq\varnothing$.

The variety
\(
X_M=X_{M,\mathrm{red}}=X_+\sqcup X_-
\)
is the disjoint union of two irreducible components, where
\[
X_M=\Spec\,k[T_{ij},w]\Big/\Bigl(\bigl((1+\sqrt{-5})T_{11}-2T_{21}\bigr)\bigl((1+\sqrt{-5})T_{11}+2T_{21}\bigr),\ w\det T-1\Bigr),
\]
and
\[
X_+=\Spec\frac{k[T_{ij},w]}
{\bigl((1+\sqrt{-5})T_{11}-2T_{21},\ w\det T-1\bigr)},\ \ 
X_-=\Spec\frac{k[T_{ij},w]}
{\bigl((1+\sqrt{-5})T_{11}+2T_{21},\ w\det T-1\bigr)}.
\]
The integral model of \(X_M\) is given by
\[
\mathcal{X}_M
=
\Spec\,\ok[T_{ij},w]
\Big/\Bigl(\bigl((1+\sqrt{-5})T_{11}-2T_{21}\bigr)\bigl((1+\sqrt{-5})T_{11}+2T_{21}\bigr),\ w\det T-1\Bigr).
\]
Since \(X_M\) is the disjoint union of \(X_+\) and \(X_-\), it follows that
\(
\Br(X_M)\cong \Br(X_+)\oplus \Br(X_-).
\)

We consider the quaternion algebra $(-1,T_{21}/(1+\sqrt{-5}))$ over $k(X_+)$ and $k(X_-)$, which defines  classes in $\br(X_+)$ and $\br(X_-)$ respectively, since $T_{21}/(1+\sqrt{-5})$ is an invertible regular function on both $X_+$ and $X_-$.
Let
\[
\beta=\left(-1,\dfrac{T_{21}}{1+\sqrt{-5}}\right)\oplus\left(-1,\dfrac{T_{21}}{1+\sqrt{-5}}\right)\in\br(X_{M}),
\]
where the first summand is taken on $X_+$ and the second on $X_-$.
As $k$ is totally imaginary, the local invariant vanishes at the archimedean place. Let $(x_v)\in\prod_v\mathcal{X}_M(\ov)$ be any adelic integral point.

\begin{itemize}
\item \textbf{For $v\neq\p,\mathfrak{q}$:} We have $\ord_v(2)=\ord_v(1+\sqrt{-5})=0$. On $X_+$, the relation $(1+\sqrt{-5})T_{11}=2T_{21}$ together with $\det T\in\ov^\times$ forces $T_{21}/(1+\sqrt{-5})\in\ov^\times$; the same holds on $X_-$ from $(1+\sqrt{-5})T_{11}=-2T_{21}$. In either case $\mathrm{inv}_v(\beta(x_v))=0$, since the Hilbert symbol $(-1,u)_v=1$ for any unit $u\in\ov^\times$.

\item \textbf{For $v=\mathfrak{q}$:} We have $\ord_{\mathfrak{q}}(1+\sqrt{-5})=1$ and $\ord_{\mathfrak{q}}(2)=0$. On $X_+$, comparing valuations in $(1+\sqrt{-5})T_{11}=2T_{21}$ with $\det T\in\mathcal{O}_{\mathfrak{q}}^\times$ forces $\ord_{\mathfrak{q}}(T_{11})=0$ and $\ord_{\mathfrak{q}}(T_{21})=1$; the same conclusion holds on $X_-$ via $(1+\sqrt{-5})T_{11}=-2T_{21}$. In either case $T_{21}/(1+\sqrt{-5})\in\mathcal{O}_{\mathfrak{q}}^\times$, so $\mathrm{inv}_{\mathfrak{q}}(\beta(x_{\mathfrak{q}}))=0$.

\item \textbf{For $v=\p$:} Write $1+\sqrt{-5}=\pi u$ with $\pi$ a uniformizer and $u\in\mathcal{O}_\p^\times$. We have $\ord_\p(2)=2$ and $\ord_\p(1+\sqrt{-5})=1$. On $X_+$, comparing valuations in $(1+\sqrt{-5})T_{11}=2T_{21}$ with $\det T\in\mathcal{O}_\p^\times$ forces $\ord_\p(T_{21})=0$, so $T_{21}/(1+\sqrt{-5})=\pi^{-1}u'$ for some $u'\in\mathcal{O}_\p^\times$; the same holds on $X_-$ via $(1+\sqrt{-5})T_{11}=-2T_{21}$. Since $k_\p=\Q_2(\sqrt3)$, the extension $k_\p(\sqrt{-1})/k_\p$ is unramified, so for $a\in k_\p^\times$ the Hilbert symbol $(-1,a)_\p$ equals $1$ if $\ord_\p(a)$ is even and $-1$ if $\ord_\p(a)$ is odd. As $\ord_\p(\pi^{-1}u')=-1$ is odd, we get $(-1,\pi^{-1}u')_\p=-1$, so in either case $\mathrm{inv}_\p(\beta(x_\p))=1/2$.
\end{itemize}

Hence $\sum_v\mathrm{inv}_v(\beta(x_v))=1/2\neq0$ for every $(x_v)\in\prod_v\mathcal{X}_M(\ov)$.
Then we conclude that $X_{M}(\A_k)^{\br}_\bullet\cap\prod_v\mathcal{X}_M(\ov)=\varnothing$, i.e. there is a Brauer--Manin obstruction to the existence of integral points on $X_M$.

\

\section{Proof of Lemma~\ref{lem:newdef}}\label{newdef proof}
In this appendix, we provide the detailed proof of Lemma~\ref{lem:newdef}.

\begin{proof}[Proof of Lemma \ref{lem:newdef}]
    By permutating the indices $i$ and $j$, we first rewrite the defining equations of $X_M'$ as the following system:
    \[
\begin{cases}
T_{i,1}=0 & \text{for } m+2\leq i\leq m+n;\\
f\cdot T_{i,j}=0 & \text{for all } 2\leq j\leq m+n-1,\ \mathrm{max}(m+2,j+1)\leq i\leq m+n,f\in\mathfrak{M}_{m+n}(i-2,j-1).
\end{cases}
\]

According to Remark \ref{another def}, $X_M$ has the same $\overline{k}$-points with the closed subschemes of \(\Spec\ k[T_{i,j} \mid_{1 \leq i,j \leq m+n} ; w]/(w \cdot \det(T_{i,j}) - 1)\) defined by all full-column minors of the matrices below, where \(1 \leq \ell \leq m+n\):
\[
    M_\ell=\begin{pmatrix}
        T_{1,1} & \cdots & T_{1,\ell-1} & 0\\
        \vdots &  & \vdots & \vdots\\
        T_{m,1} & \cdots & T_{m,\ell-1} & 0\\
        T_{m+1,1} & \cdots & T_{m+1,\ell-1} & T_{m+2,\ell}\\
        \vdots &  & \vdots & \vdots\\
        T_{m+n-1,1} & \cdots & T_{m+n-1,\ell-1} & T_{m+n,\ell}\\ 
        T_{m+n,1} & \cdots & T_{m+n,\ell-1} & 0
    \end{pmatrix}.
\]

We only need to show that the ideal generated by all full-column minors of the matrices \(M_1,\ldots,M_{m+n}\) is equal to the ideal generated by the defining equations of \(X_M'\).

For each \(1\leq \ell \leq m+n\), let \(\mathfrak{I}_\ell\) denote the ideal in \(k[T_{i,j} \mid_{1\leq i,j\leq m+n} ]\) generated by all full-column minors of the matrices \(M_1,\ldots,M_\ell\). Clearly, we have \(\mathfrak{I}_1=(T_{m+2,1},\ldots,T_{m+n,1})\).

We claim that for each \(2\leq \ell \leq m+n-1\), the ideal \(\mathfrak{I}_\ell\) is generated by the union of \(\mathfrak{I}_{\ell-1}\) and the set
\[
\mathcal{M}_\ell = \{f\cdot T_{i,\ell} \mid \mathrm{max}(m+2,\ell+1)\leq i\leq m+n,\ f\in\mathfrak{M}_{m+n}(i-2,\ell-1)\}.
\]

Now we prove the claim.
Fix such an \(\ell\), and consider the full-column minors of \(M_\ell\).

For integers \(a\geq b\geq1\), define
\[
\Upsilon_{a,b}:=\{(u_i)_{1\leq i\leq b}\in \Z^b \mid 1\leq u_1<u_2<\cdots<u_b\leq a\}.
\]

For \(2\leq e\leq m+n\), let \(M_{\mathbf{u}}\) be the submatrix of \(M_e\) obtained by selecting all columns and the rows indexed by \(\mathbf{u}=(u_i)_{1\leq i\leq e}\in \Upsilon_{m+n,e}\), so its determinant is a full-column minor of \(M_e\).

For \(\mathbf{u}=(u_i)\in \Upsilon_{m+n,\ell}\), we have
\[
\det(M_{\mathbf{u}}) =
\begin{cases}
0, & \text{if } u_\ell \leq m;\\
T_{u_\ell+1,\ell}\cdot \mathcal{C}_{\ell,\ell} + \sum_{i=1}^{\ell-1} T_{u_\ell,i}\cdot \mathcal{C}_{\ell,i}, & \text{if } m+1\leq u_\ell < m+n;\\
\sum_{i=1}^{\ell-1} T_{u_\ell,i}\cdot \mathcal{C}_{\ell,i}, & \text{if } u_\ell = m+n,
\end{cases}
\]
where \(\mathcal{C}_{\ell,i}\) denotes the cofactor of the \((\ell,i)\)-entry in \(M_{\mathbf{u}}\).

When \(m+1\leq u_\ell < m+n\), we always have \(\mathcal{C}_{\ell,\ell}\in \mathfrak{M}_{m+n}(u_\ell-1,\ell-1)\), so the term \(T_{u_\ell+1,\ell}\cdot \mathcal{C}_{\ell,\ell}\) lies in the ideal generated by \(\mathcal{M}_\ell\). 
Also, for any $\mathrm{max}(m+2,\ell+1)\leq \ell' \leq m+n$ and any $f\in \mathfrak{M}_{m+n}(\ell'-2,\ell-1)$, the term $f\cdot T_{\ell',\ell}$ appears as the term \(T_{u_\ell,\ell}\cdot \mathcal{C}_{\ell,\ell}\) when we take some \(\mathbf{u}\in \Upsilon_{m+n,\ell}\) with \(u_\ell = \ell'\).
Thus, it suffices to show that each term \(T_{u_\ell,i}\cdot \mathcal{C}_{\ell,i} \in \mathfrak{I}_{\ell-1}\) for \(1\leq i\leq \ell-1\) when \(u_\ell \geq m+1\).
When $i = 1$, we have $T_{u_\ell,1} \cdot \mathcal{C}_{\ell,1} \in \mathfrak{I}_{\ell-1}$, since either $T_{u_\ell,1} \in \mathfrak{I}_1 \subset \mathfrak{I}_{\ell-1}$ when $u_\ell \geq m+2$, or $\mathcal{C}_{\ell,1} = 0$ when $u_\ell = m+1$.

Therefore, we only need to consider those $\mathbf{u}=(u_i)\in \Upsilon_{m+n,\ell}$ with \(u_\ell \geq m+1\) and show that for each \(2\leq i\leq \ell-1\), the term \(T_{u_\ell,i}\cdot \mathcal{C}_{\ell,i} \in \mathfrak{I}_{\ell-1}\).

If \(u_{\ell-1} < u_\ell - 1\), then \(\mathcal{C}_{\ell,i}\) is generated by \(\mathfrak{M}_{m+n}(u_\ell-1,i-1)\), so \(T_{u_\ell,i}\cdot \mathcal{C}_{\ell,i} \in \mathfrak{I}_{\ell-1}\).
Thus we only need to consider the case \(u_{\ell-1} = u_\ell - 1\).

If \(u_{\ell-1} = u_\ell - 1\), then
\[
T_{u_\ell,i}\cdot \mathcal{C}_{\ell,i} = T_{u_\ell,i} \cdot \sum_{\substack{1\leq j\leq \ell\\ j\neq i}} (-1)^{\delta_{i,j}}\cdot T_{u_{\ell-1},j} \cdot \det(M_{\mathbf{u}\mid_{i,j}}),
\]
where \(\delta_{i,j} \in \{0,1\}\), a sign factor that is irrelevant in our argument, and \(M_{\mathbf{u}\mid_{i,j}}\) is the submatrix of \(M_{\mathbf{u}}\) obtained by deleting the \((\ell-1)\)-th and \(\ell\)-th rows and the \(i\)-th and \(j\)-th columns.

If \(j > i\), then \(\det(M_{\mathbf{u}\mid_{i,j}})\) is generated by \(\mathfrak{M}_{m+n}(u_\ell-2,i-1)\), so \(T_{u_\ell,i} T_{u_{\ell-1},j}\cdot \det (M_{\mathbf{u}\mid_{i,j}}) \in \mathfrak{I}_i\subset \mathfrak{I}_{\ell-1}\). 
Hence, such expansion terms are eliminated, and we are left with only those terms with \(j < i\).

We observe that the expansion proceeds row by row, starting from the last row and moving upward to the second-to-last. 
In this process, we eliminate any expansion terms whose second-to-last-row entry lies to the right of the last-row entry. 
Continuing in this manner, we stop once the index of the row being expanded, as measured in $M_\ell$, is less than $m+2$. 
At that point, if there exists some $\ell'$ such that $u_{\ell'} > m+1$ and $u_{\ell'} > u_{\ell'-1} + 1$, then all remaining expansion terms will lie in $\mathfrak{I}_{\ell-1}$, and hence be eliminated. Otherwise, the column indices of all remaining terms get smaller as we go up.

Then we have two cases to consider:
\begin{itemize}
    \item When \( u_2 > m+1 \), if there is any surviving expansion term, it must involve a full expansion across all \( \ell \) rows. In this case, the rows selected by \( \mathbf{u} \) are pairwise adjacent, and the column indices decrease strictly from bottom to top. But this is not possible: although there are \( \ell \) rows, our initial column indices are at most \( \ell - 1 \). In other words, there are not enough distinct columns to support a strictly decreasing sequence. So no expansion terms can survive. 
    \item When \( u_2 \leq m+1 \), then there exists some \( \ell' > 1 \) such that \( u_{\ell'} \leq m+1 \) and \( u_{\ell'+1} > m+1 \). If there is any surviving expansion term, the expansion process must stop at the \( u_{\ell'} \)-row, with selected entries having strictly decreasing column indices from bottom to top. The submatrix of \( M_{\mathbf{u}} \) obtained by removing the rows and columns corresponding to these entries has determinant zero, since its rightmost column is entirely zero. Therefore, all remaining expansion terms  are eliminated.
\end{itemize}

Thus, we have completed the proof of the claim that \(\mathfrak{I}_\ell\) is generated by \(\mathfrak{I}_{\ell-1}\) and \(\mathcal{M}_\ell\) for each \(2\leq \ell \leq m+n-1\).
A similar argument shows that \(\mathfrak{I}_{m+n} = \mathfrak{I}_{m+n-1}\), which completes the proof of the lemma.
\end{proof}

\

\bibliographystyle{amsalpha}
\bibliography{mybib}

 \end{document}